\newtheorem{theorem}{Theorem}[section]
\newtheorem{proposition}[theorem]{Proposition}
\newtheorem{corollary}[theorem]{Corollary}
\newtheorem{lemma}[theorem]{Lemma}
\newtheorem{observation}[theorem]{Observation}
\newtheorem{example}[theorem]{Example}
\newtheorem{defn}[theorem]{Definition}
\theoremstyle{definition}
\newcommand{\mult}{{\mathrm {mult}}}
\newcommand{\NC}{{\mathsf {NC}}}
\newcommand{\HNC}{{\mathsf {HNC}}}
\newcommand{\Krew}{{\mathsf {Krew}}}
\newcommand{\krew}{{\mathtt {krew}}}
\newcommand{\Park}{{\mathsf {Park}}}
\newcommand{\rot}{{\mathtt {rot}}}
\newcommand{\rfn}{{\mathtt {rfn}}}
\newcommand{\rank}{{\mathrm{rank}}}
\newcommand{\Cat}{{\mathsf{Cat}}}
\newcommand{\Nar}{{\mathsf{Nar}}}
\newcommand{\symm}{{\mathfrak{S}}}
\newcommand{\CC}{{\mathbb {C}}}
\newcommand{\ZZ}{{\mathbb {Z}}}
\begin{document}

\title[Cyclic sieving and rational Catalan theory]
{Cyclic sieving and rational Catalan theory}

\author{Michelle Bodnar and Brendon Rhoades}
\address
{Deptartment of Mathematics \newline \indent
University of California, San Diego \newline \indent
La Jolla, CA, 92093-0112, USA}
\email{(mbodnar, bprhoades)@math.ucsd.edu}

\begin{abstract}
Let  $a < b$ be coprime positive integers.  Armstrong, Rhoades, and Williams \cite{ARW} defined a set 
$\NC(a,b)$ of `rational noncrossing partitions', which form a subset of the ordinary noncrossing partitions
of $\{1, 2, \dots, b-1\}$.
Confirming a conjecture of Armstrong et. al., we prove that $\NC(a,b)$ is closed under rotation and prove
an instance of the cyclic sieving phenomenon for this rotational action.
We also define a rational generalization of the $\symm_a$-noncrossing parking functions of
Armstrong, Reiner, and Rhoades \cite{ARR}.
\end{abstract}

\keywords{cyclic sieving, noncrossing partition, rational Catalan theory, parking function}
\maketitle

\section{Introduction}
\label{Introduction}

This paper is about generalized noncrossing partitions arising in rational Catalan combinatorics.
A set partition of $[n] := \{1, 2, \dots, n\}$ is noncrossing if its blocks do not cross when drawn on a disk 
whose boundary is
labeled 
clockwise with $1, 2, \dots, n$.  Noncrossing partitions play a key role in algebraic and geometric combinatorics.
Along with an ever-expanding family of other combinatorial objects,
the noncrossing partitions of $[n]$ are famously counted by the {\em Catalan number}
$\Cat(n) = \frac{1}{n+1}{2n \choose n} = \frac{1}{2n+1}{2n+1 \choose n}$.

Given a Fuss parameter $m \geq 1$, the {\em Fuss-Catalan number} is 
$\Cat^{(m)}(n) = \frac{1}{mn+n+1} {mn+n+1 \choose n}$.  When $m = 1$, this reduces to the classical Catalan
number. Many Catalan object have natural Fuss-Catalan generalizations.  In the case of noncrossing 
partitions, the Fuss-Catalan number $\Cat^{(m)}(n)$ counts set partitions of $[mn]$
which are {\em $m$-divisible} in the sense that every block has size divisible by $m$.

For coprime positive integers $a$ and $b$, the {\em rational Catalan number} is 
$\Cat(a,b) = \frac{1}{a+b} {a+b \choose a, b}$.  Observe that 
$\Cat(n, n+1) = \Cat(n)$ and $\Cat(n, mn+1) = \Cat^{(m)}(n)$, so that rational Catalan numbers
are a further generalization of Fuss-Catalan numbers.
Inspired by favorable representation theoretic properties of the rational Cherednik algebra 
attached to the symmetric group $\symm_a$ at parameter $\frac{b}{a}$, the research program
of {\em rational Catalan combinatorics} seeks to further generalize Catalan combinatorics
to the rational setting.

Some rational generalizations of Catalan objects have been around for decades -- the rational analog
of a Dyck path dates back at least to the probability literature of the 1940s.
Armstrong, Rhoades, and Williams used rational Dyck paths to define rational analogs of 
polygon triangulations, noncrossing perfect matchings, and noncrossing partitions \cite{ARW}.
This paper goes deeper into the study of rational noncrossing partitions.

For coprime parameters $a < b$, Armstrong et. al. defined the {\em $a,b$-noncrossing partitions}
to be a subset $\NC(a,b)$ of the collection of noncrossing partitions of $[b-1]$ arising 
from a laser construction involving rational Dyck paths (see Section~\ref{Background} for details).
It was shown that $\NC(a,b)$ is counted by $\Cat(a,b)$, as it should be, and that 
$\NC(n,mn+1)$ is the set of $m$-divisible noncrossing partitions of $[mn]$, as it should be.

However, the construction of $\NC(a,b)$ in \cite{ARW} was indirect and involved the intermediate 
object of rational Dyck paths.  This left open the question of whether many of the fundamental properties
of classical noncrossing partitions generalize to the rational case.  
For example, it was unknown whether the set $\NC(a,b)$ is closed under the dihedral group of symmetries
of the disk with $b-1$ labeled boundary points.
Consequently, the rich theory of counting noncrossing set partitions fixed by a dihedral symmetry
(see \cite{RSWCSP}) lacked a rational extension.
Moreover, the unknown status of rotational closure made
 it difficult to generalize the noncrossing parking functions of Armstrong, Reiner, and
Rhoades \cite{ARR} (or the $2$-noncrossing partitions of Edelman \cite{Edelman}) to the rational setting.
The core problem was that the natural dihedral symmetries of noncrossing partitions
are harder to visualize on the level of Dyck paths, even in the classical case.

The purpose of this paper is to resolve the issues in the last paragraph to support $\NC(a,b)$ as 
the `correct' definition of the rational noncrossing partitions.  We will prove the following. 
\begin{itemize}
\item
$\NC(a,b)$ is closed under
dihedral symmetries (Corollary~\ref{dihedral-closed}).
\item 
The action of rotation on $\NC(a,b)$ exhibits a cyclic sieving phenomenon
generalizing that for the action of rotation on classical noncrossing partitions (Theorem~\ref{catalan-csp}). 
\item
The numerology of partitions in $\NC(a,b)$ with a nontrivial rotational symmetry generalizes that of 
classical noncrossing partitions with a nontrivial rotational symmetry 
(Corollaries~\ref{symmetric-kreweras-count}, \ref{symmetric-narayana-count}, \ref{symmetric-catalan-count}).
\item
Partitions in $\NC(a,b)$ can be decorated to obtain a $\symm_a \times \ZZ_{b-1}$-set of 
{\em rational noncrossing parking functions} $\Park^{NC}(a,b)$.  The formula for the
permutation character of this set 
generalizes the corresponding formula for the classical case (Theorem~\ref{rational-weak}).
\end{itemize}

The key to obtaining the rational extensions of classical results presented above will be to 
develop a better understanding of the set $\NC(a,b)$.  
We will give two new characterizations of this set 
(Propositions~\ref{kreweras-characterization} and \ref{rank-sequence-characterization}).
The more important of these will involve an idea genuinely new to rational Catalan combinatorics:
a new measure of size for blocks of set partitions in $\NC(a,b)$ called {\em rank}.

In the Fuss-Catalan case $(a,b) = (n, mn+1)$, the rank of a block is determined by its 
cardinality (Proposition~\ref{ranks-fuss}).  However, rank and cardinality diverge at the rational level 
of generality.
The main heuristic  of this paper is that: 
\begin{quote}
The rank of a block  of a rational noncrossing partition
 is a better measure of its
size than its cardinality.
\end{quote}
In Section~\ref{Characterizations}, we will prove that rank shares additivity and dihedral invariance properties
with size.  
We will prove that ranks (unlike cardinalities) characterize which noncrossing set partitions of $[b-1]$
lie in $\NC(a,b)$.
In Section~\ref{Parking functions}, ranks (not cardinalities) will be used to define and study rational
noncrossing parking functions.
In Sections~\ref{Modified rank sequences} and \ref{Cyclic sieving}, ranks (not cardinalities)
will be used to give rational analogs of enumerative results for noncrossing partitions
with rotational symmetry.

\section{Background}
\label{Background}

\subsection{Rational Dyck paths}
The prototypical object in rational Catalan combinatorics is the rational   Dyck path.
Let $(a, b)$ be coprime positive integers.  An {\em $a,b$-Dyck path} (or just a {\em Dyck path} when $a$ and $b$ are clear
from context) is a lattice path in $\ZZ^2$ consisting of north and east steps which starts at $(0, 0)$ ends at $(b,a)$, and stays
above the line $y = \frac{a}{b} x$.  
The $5,8$-Dyck path $NENEENNENEEE$  is shown in Figure~\ref{fig:nc-example}.
When $(a,b) = (n,n+1)$, rational Dyck paths are equivalent to classical Dyck paths 
 -- lattice paths from $(0, 0)$ to $(n, n)$ which stay weakly above $y = x$.

If $D$ is an $a,b$-Dyck path, a {\em valley} of 
$D$ is a lattice point $P$ on $D$ such that $P$ is immediately preceded by an east
step and succeeded by a north step.  
A {\em vertical run} of $D$ is a maximal contiguous sequence of north steps; the number of vertical runs 
equal the number of valleys.  The $5,8$-Dyck path shown in Figure~\ref{fig:nc-example} has $4$ valleys.  The
vertical runs of this path have sizes $1, 1, 2,$ and $1$. 

The numerology associated to rational Dyck paths generalizes that of classical Dyck paths.
The number of $a,b$-Dyck paths is the rational Catalan number $\Cat(a,b) = \frac{1}{a+b}{a+b \choose a,b}$.
The set of $a,b$-Dyck paths $D$ with $k$ vertical runs is counted by the {\em rational Narayana number}
\begin{equation}
\Nar(a,b; k) := \frac{1}{a} {a \choose k} {b-1 \choose k-1}.  
\end{equation}
Given a length $a$ vector of
nonnegative integers ${\bf r} = (r_1, r_2, \dots, r_a)$ satisfying $\sum i r_i = a$, the number of $a,b$-Dyck paths
with $r_i$ vertical runs of size $i$ for $1 \leq i \leq a$ is given by the {\em rational Kreweras number}
\begin{equation}
\Krew(a, b; {\bf r}) := \frac{1}{b} {b \choose r_1, r_2, \dots, r_a, b-k} = \frac{(b-1)!}{r_1! r_2! \cdots r_a! (b-k)!},
\end{equation} 
where 
$k = \sum r_i$ is the total number of vertical runs.  For example, the $5,7$-Dyck path shown in 
Figure~\ref{fig:homognc} contributes to $\Krew(5,7; {\bf r})$, where
${\bf r} = (1,2,0,0,0)$.

\subsection{Noncrossing partitions}
A set partition $\pi$ of $[n]$ is called {\em noncrossing} if, for all indices $1 \leq i < j < k < \ell \leq n$, we have that
$i \sim k$ in $\pi$ and $j \sim \ell \in \pi$ together imply that $i \sim j \sim k \sim \ell$ in $\pi$.
Equivalently, the set partition $\pi$ is noncrossing if and only if the convex hulls of the blocks of $\pi$ do not intersect
when drawn on the disk with boundary points labeled clockwise with $1, 2, \dots, n$.

We let $\NC(n)$ denote the collection of noncrossing partitions of $[n]$.
The {\em rotation} operator $\rot$ acts on the index set $[n]$, the power set $2^{[n]}$, and 
the collection $\NC(n)$ by the permutation
$\begin{pmatrix} 1 & 2 & \dots & n-1 & n \\ 2 & 3 & \dots & n & 1 \end{pmatrix}$.  These three sets also carry an action of the
{\em reflection} operator $\rfn$ by the permutation
$\begin{pmatrix} 1 & 2 & \dots & n-1 & n \\ n & n-1 & \dots & 2 & 1 \end{pmatrix}$.  Together, $\rot$ and $\rfn$ generate a dihedral
action on these sets.

\subsection{Rational noncrossing partitions}
In \cite{ARW}, rational Dyck paths were used to construct a rational generalization of the noncrossing partitions.  Let $D$
be an $a,b$-Dyck path and let $P \neq (0,0)$ be a lattice point which is at the bottom of a north step of $D$.  The {\em laser}
$\ell(P)$ 
 is the line segment 
of slope $\frac{a}{b}$ which is `fired' northeast
 from $P$ and continues until it intersects 
the Dyck path $D$.  By coprimality, the east endpoint of $\ell(P)$ is necessarily on the interior of an east step of $D$.

\begin{figure}
\includegraphics[scale = 0.25]{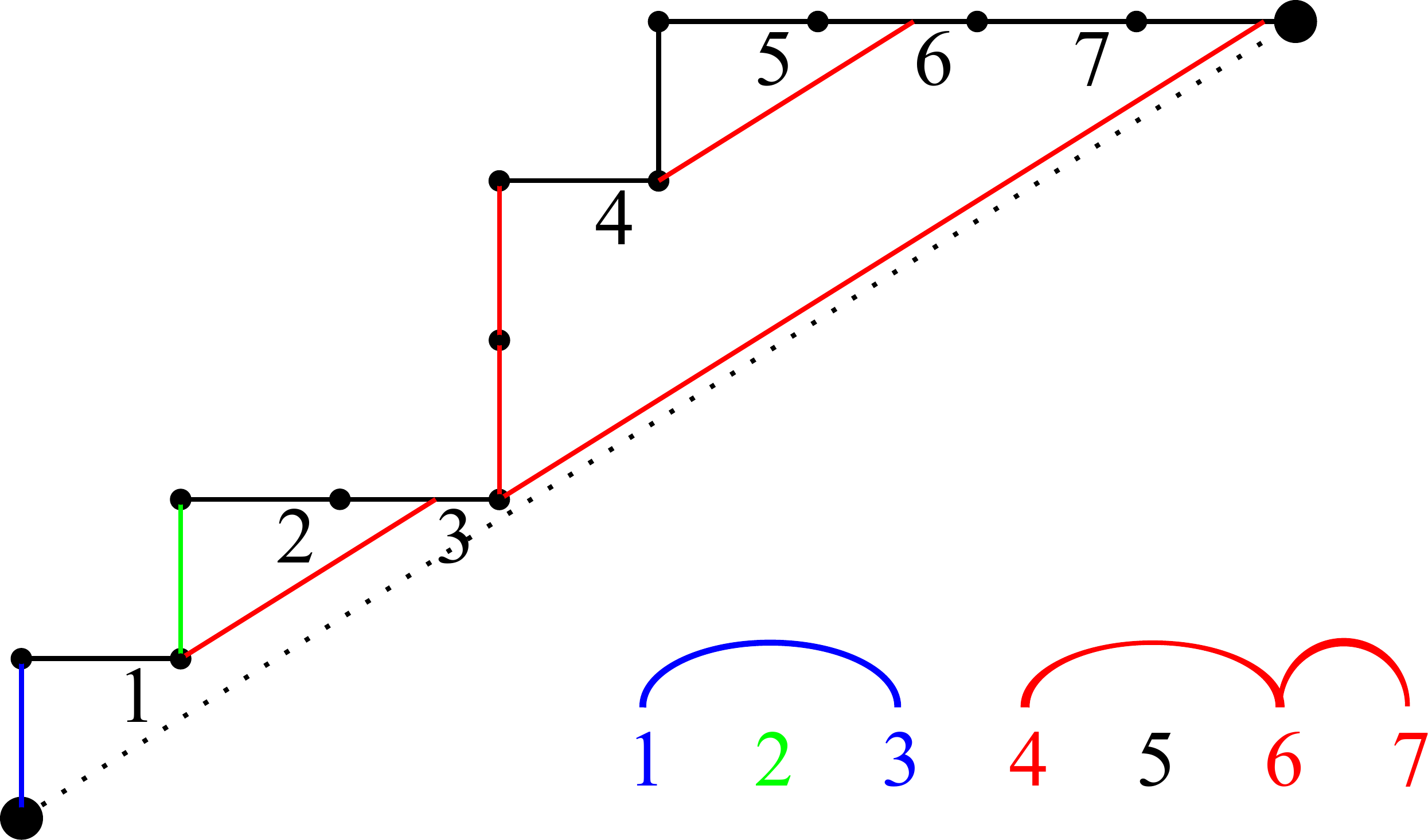}
\caption{A $5,8$-Dyck path and the corresponding $5,8$-noncrossing partition.  The visibility bijection
is shown with colors.}
\label{fig:nc-example}
\end{figure}

Let $D$ be an $a,b$-Dyck path.  We define a set partition $\pi(D)$ of $[b-1]$ as follows.  Label the east 
ends of the non-terminal east
steps of $D$ from left to right with $1, 2, \dots, b-1$ and fire lasers from all of the valleys of $D$. 
The set partition $\pi(D)$ is defined by the `visibility' relation
\begin{center}
$i \sim j$ if and only if the labels $i$ and $j$ are not separated by laser fire.
\end{center}
(Here we consider labels to lie slightly {\em below} their lattice points.) 
By construction, the set partition $\pi(D)$ is noncrossing.

An example of this construction when $(a,b) = (5,8)$ is shown in Figure~\ref{fig:nc-example}.  
If $D$ is an $a,b$-Dyck path, we have a natural `visibility' bijection from the set of vertical runs of $D$ to
the set of blocks of $\pi(D)$ which associates a vertical run at $y = i$ to the block of $\pi(D)$ whose minimum
element is $i+1$.  The visibility bijection is shown with colors in Figure~\ref{fig:nc-example}.

It will be convenient to think of the lasers fired from the valleys of a Dyck path $D$ in terms of their endpoints.
We let the {\em laser set} $\ell(D)$ of the $a,b$-Dyck path $D$ be the set of pairs $(i, j)$ such that $D$ contains a laser 
starting from a valley with $x$-coordinate $i$ and ending in the interior of an east step with west $x$-coordinate $j$.
For the $5,8$-Dyck path $D$ shown in Figure~\ref{fig:nc-example}, we have that
\begin{equation*}
\ell(D) = \{ (1,2), (3,7), (4,5) \}.
\end{equation*}
For $a < b$ coprime,
we define 
the  set of {\em admissible lasers}
\begin{equation*}
A(a,b) := \left\{(i, j) \,:\, 1 \leq i < j \leq b-1 \text{ and } j - i = \left\lfloor \frac{rb}{a} \right\rfloor \text{ for some $r = 1, 2, \dots, a-1$ } \right\}.
\end{equation*}
Slope considerations show that $(i, j) \in \ell(D)$ for some $a,b$-Dyck path $D$ if and only if 
$(i, j) \in A(a,b)$.

By considering $\pi(D)$ for all possible $a,b$-Dyck paths $D$, we get the set of {\em $a,b$-noncrossing partitions}
\begin{equation*}
\NC(a,b) := \{\pi(D) \,:\, \text{$D$ an $a,b$-Dyck path}\} 
\end{equation*}
(This is called the set of
{\em inhomogeneous $a,b$-noncrossing partitions} in \cite{ARW}.)
It is clear from construction that $\NC(a,b) \subseteq \NC(b-1)$.
Some basic facts about $a,b$-noncrossing partitions are as follows.

\begin{proposition}
\label{nc-basic-facts}
Let $a < b$ be coprime positive integers.
\begin{enumerate}
\item The map
\begin{equation*}
\pi: \{ \text{$a,b$-Dyck paths} \} \rightarrow \NC(a,b)
\end{equation*}
is bijective, so that $|\NC(a,b)| = \Cat(a,b)$ and 
the number of $a,b$-noncrossing partitions with $k$ blocks is the  rational Narayana number $\Nar(a,b;k)$.
\item  If $\pi \in \NC(a,b)$ and $\pi'$ is a noncrossing partition of $[b-1]$ which coarsens $\pi$, then $\pi' \in \NC(a,b)$.
\end{enumerate}
\end{proposition}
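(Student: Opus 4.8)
The plan is to treat the two parts separately: part (1) by reconstructing a Dyck path from its noncrossing partition, and part (2) by a local surgery on Dyck paths that realizes a single merge of two blocks.

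For part (1), surjectivity of $\pi$ is immediate from the definition of $\NC(a,b)$, so the content is injectivity together with the Narayana refinement. To prove injectivity I would recover $D$ from $\pi = \pi(D)$. The visibility bijection identifies the blocks of $\pi$ with the vertical runs of $D$, the block with minimum $i+1$ corresponding to the run based at height $i$; reading off the block minima therefore recovers the bottom heights $0 = h_1 < h_2 < \cdots < h_k$ of the vertical runs. Since consecutive vertical runs are separated by at least one east step and the runs together tile the height interval $[0,a]$, the run based at $h_t$ is forced to have size $h_{t+1}-h_t$ (and $a-h_k$ for the topmost run), so the run sizes are recovered too. Finally the horizontal positions of the runs are pinned down by the left endpoints of the lasers in $\ell(D)$, and $\ell(D)$ is itself determined by the block structure of $\pi$: because $\pi$ is noncrossing, a laser separating two consecutive labels must terminate exactly at the right end of the ``territory'' it encloses. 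Thus $\pi$ is injective, $|\NC(a,b)|$ equals the number of $a,b$-Dyck paths $\Cat(a,b)$, and since the visibility bijection matches $k$-block partitions with $k$-run paths, the count with $k$ blocks is the rational Narayana number $\Nar(a,b;k)$ recalled in Section~\ref{Background}.

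For part (2), I would first reduce to a single covering step: since $\NC(b-1)$ is graded by number of blocks and any two comparable elements are joined by a saturated chain lying entirely in $\NC(b-1)$, it suffices to show $\pi'\in\NC(a,b)$ whenever $\pi\in\NC(a,b)$ and $\pi\lessdot\pi'$ in $\NC(b-1)$. Such a cover merges exactly two blocks $B,C$ of $\pi$, and since $\pi$ is noncrossing we may assume $C$ lies inside a single gap of $B$. Writing $\pi = \pi(D)$, let $\lambda\in\ell(D)$ be the laser separating $C$ from $B$ — the one fired from the valley at the bottom of the vertical run associated to $C$. Because the partition $\pi(D)$ depends only on the set $\ell(D)$ of laser chords, it suffices to produce an $a,b$-Dyck path $D'$ with $\ell(D') = \ell(D)\setminus\{\lambda\}$: for such a $D'$ one has $\pi(D') = \pi'$, the partition obtained from $\pi$ by merging $B$ and $C$. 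I would obtain $D'$ from $D$ by the local surgery that eliminates the valley from which $\lambda$ is fired — a rearrangement of the north and east steps confined to the lattice region bounded by $\lambda$ and $D$ — so that all other valleys, and hence all other lasers, are preserved.

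The hard part will be showing that this surgered path $D'$ is a genuine $a,b$-Dyck path: it is automatically a monotone lattice path from $(0,0)$ to $(b,a)$ with $a$ north and $b$ east steps, and (once the region is rearranged correctly) automatically ends up with laser set $\ell(D)\setminus\{\lambda\}$, so the only real issue is that $D'$ stays weakly above the line $y=\frac{a}{b}x$. This is precisely where it is essential that every laser has slope $\frac{a}{b}$, equal to that of the bounding line: the arc of $D$ that the surgery pushes downward is bounded below by $\lambda$, a segment of slope $\frac{a}{b}$, so it cannot drop below a line of the same slope. One should also check that the rearrangement creates no new valley — which follows because east steps are only permuted within the single ``window'' of $\lambda$, leaving the set of local minima outside it unchanged, and because the admissibility condition $j-i=\lfloor rb/a\rfloor$ defining $A(a,b)$ makes the reassembled path consistent. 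I expect part (1) and the reduction to a cover to be routine, and this last verification to be where the genuine work lies.
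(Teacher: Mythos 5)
The paper simply cites this proposition from \cite{ARW} and gives no internal proof, so there is nothing to compare against; I will assess your argument on its own. Your proof of part (1) has a real gap. The block minima of $\pi(D)$ recover the $x$-coordinates of the bottoms of the vertical runs of $D$, not the heights. (The sentence in Section 2 saying the run ``at $y=i$'' corresponds to the block with minimum $i+1$ is a misprint for ``at $x=i$''; check the worked example, whose rank sequence is $(1,1,0,2,1,0,0)$, so block minima are $1,2,4,5$, while the run bottoms sit at heights $0,1,2,4$, not $0,1,3,4$.) The valleys' $x$-positions by themselves do not pin down the run sizes, and your argument collapses: the $5,8$-Dyck paths $NENEENNENEEEE$ and $NENEENENNEEEE$ both have their vertical-run bottoms at $x=0,1,3,4$, hence both give partitions with block minima $\{1,2,4,5\}$, yet the partitions are $\{\{1,3\},\{2\},\{4,6,7\},\{5\}\}$ and $\{\{1,3\},\{2\},\{4\},\{5,6,7\}\}$ respectively. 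The missing ingredient is precisely the $a,b$-rank function of Section~\ref{Characterizations}: the rank sequence $R(\pi)$ is computed intrinsically from $\pi$, and Proposition~\ref{recover-dyck-path} gives $D=N^{r_1}E\cdots N^{r_{b-1}}EE$, which is what actually proves injectivity.

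Part (2) also has a gap: deleting the one laser $\lambda$ fired from $C$'s valley does not in general merge $C$ with $B$, and the laser set of the cover $\pi'$ need not be $\ell(D)\setminus\{\lambda\}$ at all. Take $(a,b)=(7,8)$ and $\pi=\{\{1,2\},\{3,4\},\{5,6,7\}\}$, so $D=N^2EEN^2EEN^3EEEE$ with $\ell(D)=\{(2,7),(4,7)\}$. The cover $\pi'=\{\{1,2,5,6,7\},\{3,4\}\}$ obtained by merging $B=\{1,2\}$ with $C=\{5,6,7\}$ has path $D'=N^5EEN^2EEEEEE$ and $\ell(D')=\{(2,4)\}$, which is not obtained from $\ell(D)$ by deleting any single chord. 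Indeed, deleting $\lambda=(4,7)$ leaves $\{(2,7)\}$, whose partition is $\{\{1,2\},\{3,4,5,6,7\}\}$: removing $\lambda$ merges $C$ with $\{3,4\}$, the block sitting directly on the far side of $\lambda$, not with $B$. A surgery that does work --- and is exactly the one used in the proof of Proposition~\ref{ranks-add} --- is to stack $C$'s entire vertical run on top of $B$'s; this only pushes the path northward, so the Dyck condition is automatic, but its effect on the laser set is more global than a single deletion, so your reduction to ``delete one laser'' needs to be abandoned or substantially repaired.
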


When $(a,b) = (n,n+1)$, the set $\NC(n,n+1)$ of rational noncrossing partitions is just the set $\NC(n)$ of all noncrossing partitions of 
$[n]$.  When $(a,b) = (n,mn+1)$ we have that $\NC(n, mn+1)$ of rational noncrossing partitions is the set of 
all noncrossing partitions of $[mn]$ which are {\em $m$-divisible} in the sense that every block size is divisible by $m$.
Armstrong, Rhoades, and Williams posed the problem of finding an analogous `intrinsic' characterization
 of $\NC(a,b)$ for arbitrary $a < b$ coprime.  We give two such characterizations in 
 Section~\ref{Characterizations}.

\subsection{Homogeneous rational noncrossing partitions}
Rational Dyck paths are used in \cite{ARW} to construct a generalization of noncrossing perfect matchings on the set $[2n]$.
The construction is  similar to that of the rational noncrossing partitions.

\begin{figure}
\includegraphics[scale = 0.5]{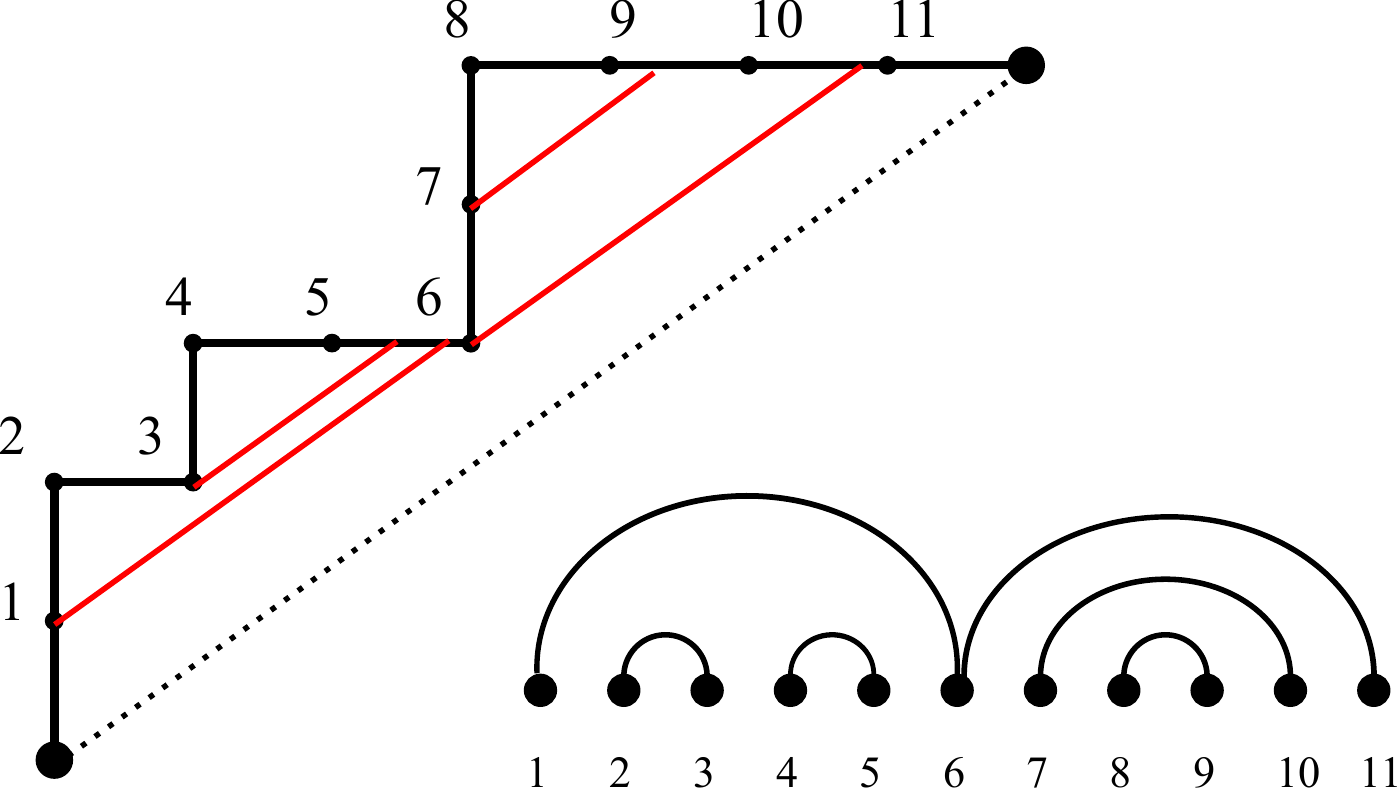}
\caption{A $5,7$-Dyck path and the corresponding $5,7$-homogeneous noncrossing partition.}
\label{fig:homognc}
\end{figure}

Let $(a,b)$ be coprime and let $D$ be an $a,b$-Dyck path.  We construct a set partition $\overline{\pi}(D)$ of the set
$[a + b - 1]$ as follows.  Label the interior lattice points of $D$ from southwest to northeast with $1, 2, \dots, a+b-1$.
Fire lasers of slope $\frac{a}{b}$ from 
{\em every} lattice point of $D$ which is at the south end of a north step (not just those lattice points which 
are valleys of $D$).  
The set partition $\overline{\pi}(D)$ is given by
declaring $i \sim j$ if and only if  the labels $i$ and $j$ are not separated by laser fire.  As before, we consider labels to
be slightly below their lattice points. Topological considerations make it clear
 that $\overline{\pi}(D)$ is a noncrossing partition of $[a+b-1]$.
 An example of this construction is shown in Figure~\ref{fig:homognc} for $(a,b) = (5,8)$.

Considering $\overline{\pi}(D)$ for all possible $a,b$-Dyck paths $D$ gives rise to the set of
{\em $a,b$-homogeneous rational noncrossing partitions}
\begin{equation*}
\HNC(a,b) := \{ \overline{\pi}(D) \,:\, \text{$D$ an $a,b$-Dyck path} \}.
\end{equation*} 
The adjective `homogeneous' refers to the fact that every set partition in $\HNC(a,b)$ has $a$ blocks.
By construction, we have that $\HNC(a,b) \subseteq \NC(a+b-1)$. 

 When $(a,b) = (n, n+1)$, the set 
$\HNC(n,n+1)$ is the set of noncrossing perfect matchings on $[2n]$.  When $(a, b) = (n, mn+1)$, the set
$\HNC(n, mn+1)$ is the set of noncrossing set partitions on $[mn]$ in which every block has size $m$ 
(these are also called {\em $m$-equal} noncrossing partitions).
Some basic facts about $\HNC(a,b)$ for general $a < b$ are as follows.

\begin{proposition}
\label{hnc-basic-facts}
Let $a < b$ be coprime positive integers.
\begin{enumerate}
\item The map
\begin{equation*}
\overline{\pi}: \{ \text{$a,b$-Dyck paths} \} \rightarrow \HNC(a,b)
\end{equation*}
is bijective, so that $|\HNC(a,b)| = \Cat(a,b)$.
\item The set $\HNC(a,b)$  is closed under the rotation operator $\rot$.
\end{enumerate}
\end{proposition}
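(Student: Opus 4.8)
The plan is to prove the two parts in sequence, using Part (1) to set up a concrete model of $\HNC(a,b)$ that makes Part (2) tractable.

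\emph{Part (1).} Surjectivity of $\overline{\pi}$ onto $\HNC(a,b)$ is immediate, so the content is injectivity, which I would get by recovering $D$ from $\overline{\pi}(D)$. First I would record two normalizations: an $a,b$-Dyck path always begins with a north step (an initial east step would land at $(1,0)$, below $y=\frac{a}{b}x$) and always ends with an east step (a terminal north step would pass through $(b,a-1)$, again below the line). Next I would set up the homogeneous analogue of the visibility bijection already recorded for $\pi(D)$: firing lasers from all $a$ north-step bottoms (the one from $(0,0)$ being the segment of the line $y=\frac{a}{b}x$, i.e.\ the lower boundary) cuts the region between $D$ and that line into $a$ cells, and the cell created at a given north step has, as its least label, the interior lattice point at the \emph{top} of that north step. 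Since the top of the north step taken at step $k$ is the lattice point labeled $k$, this shows that the set of block-minima of $\overline{\pi}(D)$ equals the set of indices $k$ at which $D$ moves north. That set lies in $\{1,\dots,a+b-1\}$ (no north step is the last step) and determines the entire step word, hence $D$; so $\overline{\pi}$ is injective, hence bijective, and $|\HNC(a,b)|$ equals the number of $a,b$-Dyck paths, namely $\Cat(a,b)$.

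\emph{Part (2).} It suffices to show $\rot(\overline{\pi}(D))\in\HNC(a,b)$ for every $D$; finiteness and invertibility of $\rot$ then give $\rot(\HNC(a,b))=\HNC(a,b)$ and stability under all of $\langle\rot\rangle\cong\ZZ_{a+b-1}$. By Part (1), $\overline{\pi}(D)\mapsto S_D:=\{\text{block-minima}\}$ identifies $\HNC(a,b)$ with the family of size-$a$ subsets $S\subseteq\{1,\dots,a+b-1\}$ that are north-step index sets of $a,b$-Dyck paths (equivalently $|S\cap[t]|\ge \frac{at}{a+b}$ for all $t$). Writing $B^{\ast}$ for the block of $\overline{\pi}(D)$ containing the largest label $a+b-1$ and $\mu=\min B^{\ast}$, a direct computation gives that the block-minima of $\rot(\overline{\pi}(D))$ are $S'=\{1\}\cup\{m+1 : m\in S_D\setminus\{\mu\}\}$. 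So the plan splits into: (a) show $S'$ is again admissible, so $S'=S_{D'}$ for a unique $a,b$-Dyck path $D'$; and (b) show $\overline{\pi}(D')=\rot(\overline{\pi}(D))$ as noncrossing partitions, not merely that their block-minima agree. For (b) I would argue at the level of laser arc diagrams: since every laser has the fixed slope $\frac{a}{b}$, the lasers of $D$ --- recorded as arcs from labels to the gaps between consecutive labels --- form a nested, noncrossing system, and the admissibility of a single laser is a condition only on the \emph{difference} of its two endpoint labels, hence is untouched by advancing every label one notch; one then checks that rotating the circle one notch carries the laser arc diagram of $D$ to that of $D'$, so the two partitions coincide.

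\emph{Where the difficulty lies.} The work of Part (2) is entirely in (a) and (b). Claim (b) needs a precise dictionary between an $a,b$-Dyck path and its saturated laser arc diagram --- which label is joined to which gap, and how the nesting order reconstitutes the path --- together with a check that this dictionary intertwines one notch of rotation with $D\mapsto D'$; the subtle case is a laser that lands on the terminal east step, which is exactly why $\mu$ need not equal $1$. Claim (a) is the homogeneous form of ``rotation keeps us inside the family,'' and this is precisely the step that is substantially harder for the inhomogeneous partitions $\NC(a,b)$ (hence deferred there to Corollary~\ref{dihedral-closed}); the saving grace here is that the homogeneous construction is saturated --- every north step fires a laser, so the arc diagram rigidly determines the whole step word --- which keeps the bookkeeping elementary. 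In Part (1), the only point requiring an argument is the homogeneous visibility bijection, whose proof is the same local laser analysis as in the inhomogeneous case treated in \cite{ARW}.
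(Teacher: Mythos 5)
The paper does not actually prove this Proposition: it quotes it from \cite{ARW}, and the only additional gloss the authors provide is the remark following the statement, that \cite{ARW} constructs a ``promotion'' operator on $a,b$-Dyck paths intertwining $\rot$ with $\overline{\pi}$. So there is no in-paper proof to compare against; what I can do is assess yours on its own and against the cleaner route the paper implicitly makes available later.

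Your Part~(1) is essentially sound. The claim that the cell opened by the laser at north step $k$ has least label $k$ is exactly the homogeneous visibility bijection, and since the north-step index set in $\{1,\dots,a+b-1\}$ determines the step word (the last step being always east), injectivity of $\overline{\pi}$ follows. You rightly flag the visibility lemma as the only real content; it does deserve a sentence or two of justification (the laser from the foot of step $k$ has both endpoints on the path and the arc of the path it cuts off starts at the top of step $k$, so every label it isolates is $\geq k$), but the argument is genuine and of the same flavour as the inhomogeneous case.

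Part~(2), however, is not a proof: you correctly compute the candidate minima-set $S'=\{1\}\cup\{m+1:m\in S_D\setminus\{\mu\}\}$ and correctly identify that two nontrivial claims remain --- (a) $S'$ is admissible, and (b) the resulting $\overline{\pi}(D')$ actually equals $\rot(\overline{\pi}(D))$ and not merely a partition with the same block minima --- but you then only describe the shape of an argument for each. Claim~(a) is genuinely delicate because $\mu$ is not a function of $S_D$ alone: it is the foot of the innermost laser landing on the terminal east step, so any honest proof of admissibility must analyze the laser geometry, not just shift a subset. Claim~(b) requires a precise dictionary between the step word and the full nested laser system, plus a check that the wrap-around of the arc through $a+b-1$ behaves correctly; your ``laser arc diagram'' picture is the right intuition but is not carried out. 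A shorter route, which the paper itself uses in the proof of Theorem~\ref{homogeneous-catalan-csp}, is to note that replacing each $N$ in $D$ by $NE$ gives an $a,(a+b)$-Dyck path $D'$ with $\overline{\pi}(D)=\pi(D')$, identifying $\HNC(a,b)$ with the set of $a,(a+b)$-noncrossing partitions with exactly $a$ blocks; closure under $\rot$ then follows immediately from Proposition~\ref{closed-under-rotation} (or Corollary~\ref{dihedral-closed}), since rotation preserves the block count. That argument of course relies on material proved later in the paper, which is presumably why the authors simply cite \cite{ARW} here; but if your goal is a self-contained proof, then (a) and (b) must actually be established, and as written they are not.
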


In \cite{ARW} a promotion operator on $a,b$-Dyck paths is shown to intertwine the action of 
$\rot$ with the map $\overline{\pi}$ from paths to homogeneous noncrossing partitions.  
We will give an inhomogeneous analog of this promotion operator in the next section.

\section{Characterizations of Rational Noncrossing Partitions}
\label{Characterizations}

\subsection{The action of rotation}
Let $a < b$ be coprime.  In this subsection we will prove that $\NC(a,b)$ is closed under the rotation operator $\rot$
by describing rotation (or, rather, its inverse) as an operator on $a,b$-Dyck paths.
We will define an operator
\begin{equation*}
\rot': \{ \text{$a,b$-Dyck paths} \} \longrightarrow \{ \text{$a,b$-Dyck paths} \}
\end{equation*}
 on the set of $a,b$-Dyck paths
 which satisfies $\rot^{-1} \circ \pi = \pi \circ \rot'$.
 
 \begin{figure}
\includegraphics[scale = 0.7]{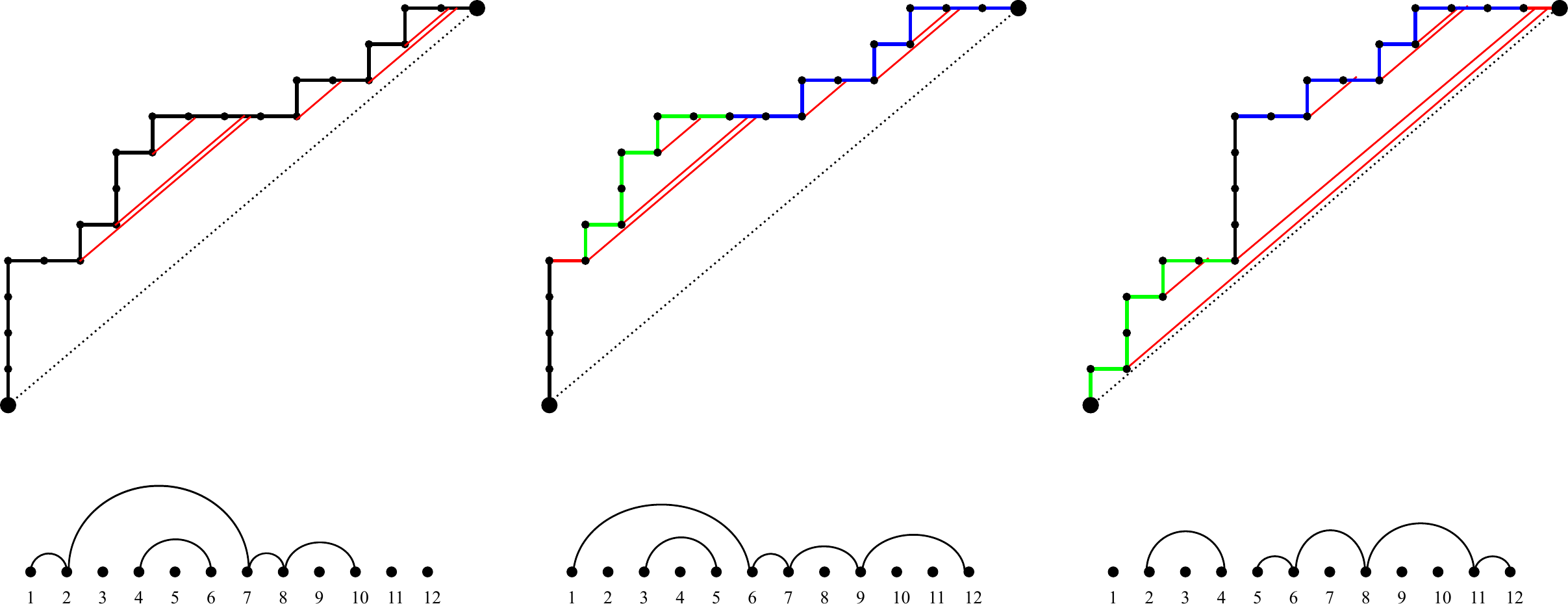}
\caption{The action of $\rot'$ on $a,b$-Dyck paths.}
\label{fig:rotation}
\end{figure}

The definition of $\rot'$ is probably best understood visually.
Figure~\ref{fig:rotation} shows three $11,13$-Dyck paths.  
The middle path is the image of the left path under
$\rot'$ and the right path is the image of the middle path under $\rot'$.  
The valley lasers are fired on each of these Dyck paths, and the corresponding 
partitions in $\NC(11,13)$ are shown below, but the vertex labels on the Dyck paths
are omitted for  legibility.

Let $D_1$ be the $11,13$-Dyck path on the left of Figure~\ref{fig:rotation}.  The 
westernmost horizontal
run of $D_1$ has size $> 1$.  Because of this, we define $D_2 := \rot'(D_1)$ by removing one step from
this horizontal run and adding it to the easternmost horizontal run of $D_1$.  The lattice 
path $D_2$ is  displayed in the middle of Figure~\ref{fig:rotation}.  Informally, the path $D_2$ is obtained 
from $D_1$ by translating one unit west.

Let $D_2$ be the $11,13$-Dyck path in the middle of Figure~\ref{fig:rotation}.  The westernmost
horizontal run of $D_2$ has size $1$.  The definition of $D_3 := \rot'(D_2)$ is more complicated in this case.
We break the lattice path $D_2$ up into four subpaths.  The first subpath, shown in black, is the initial northern
run of $D_2$.  The second subpath, shown in red, is the single east step which occurs after this run.  
The third subpath, shown in green, extends from the westernmost valley of $D_2$ to the (necessarily east)
step just before the laser fired from this valley hits $D_2$.  The fourth subpath, shown in blue, extends from
the end of the third subpath to the terminal point $(b,a)$ of $D_2$.  The path $D_3$ is obtained
by concatenating the third, first, fourth, and second subpaths, in that order.  The concatenation
of  green, then black, then blue, then red is shown on the right of Figure~\ref{fig:rotation}.

 More formally,
 given an $a,b$-Dyck path $D$, the definition of $\rot'(D)$ breaks up into three cases.
 Let $D = N^{i_1} E^{j_1} \cdots N^{i_m} E^{j_m}$ be the decomposition of $D$ into nonempty 
 vertical and horizontal runs.
\begin{enumerate}
\item
If $m = 1$ so that $D = N^a E^b$, we set 
\begin{equation*}
\rot'(D) := N^a E^b = D.
\end{equation*}
\item
If $m, j_1 > 1$,  we set
\begin{equation*}
\rot'(D) := N^{i_1} E^{j_1 - 1} N^{i_2} E^{j_2} \cdots N^{i_m} E^{j_m + 1}.
\end{equation*}
\item
If $m > 1$ and $j_1 = 1$, let $P = (1, i_1)$ be the westernmost valley of $D$.  The laser $\ell(P)$ fired from $P$
hits $D$ on a horizontal run $E^{i_k}$ for some $2 \leq k \leq m$.  Suppose that $\ell(P)$ hits the horizontal run
$E^{i_k}$ on step $r$, where $2 \leq r \leq i_k$.  We set
\begin{equation*}
\rot'(D) := N^{i_2} E^{i_2} \cdots N^{i_{k-1}} E^{i_{k-1}} N^{i_k} E^{r-1} N^{i_1} E^{i_k - r + 1} N^{i_{k+1}} E^{i_{k+1}}  \cdots N^{i_m} E^{j_m + 1}.
\end{equation*}
\end{enumerate}

\begin{proposition}
\label{closed-under-rotation}
The definition of $\rot'$ given above gives a well defined operator on the set of $a,b$-Dyck paths.
As operators $\{ a,b$-Dyck paths $\} \longrightarrow \NC(a,b)$, we have that $\rot^{-1} \circ \pi = \pi \circ \rot'$.
In particular, the set $\NC(a,b)$ is closed under rotation.
\end{proposition}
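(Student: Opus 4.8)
The proposition bundles three assertions: that $\rot'$ is a well-defined operator on $a,b$-Dyck paths, that $\rot^{-1} \circ \pi = \pi \circ \rot'$, and that $\NC(a,b)$ is closed under rotation. The last is a formal consequence of the first two. Indeed, by Proposition~\ref{nc-basic-facts}(1) the map $\pi$ is a bijection onto $\NC(a,b)$, so any $\sigma \in \NC(a,b)$ may be written $\sigma = \pi(D)$, and then $\rot^{-1}(\sigma) = \rot^{-1}(\pi(D)) = \pi(\rot'(D)) \in \NC(a,b)$; since $\NC(a,b)$ is finite and $\rot$ is a bijection of $\NC(b-1)$, closure under $\rot^{-1}$ forces closure under $\rot$. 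So the content is entirely in the first two claims, and I would organize the proof around a single geometric observation: \emph{$\rot'(D)$ is the $a,b$-Dyck path obtained by cutting $D$ along the laser fired from its westernmost valley and re-hinging the pieces}, with Case (1) being ``no valley to cut from'' and Case (2) being ``the cut is trivial, so $\rot'$ merely translates the tail of $D$ one unit west.'' This picture makes both remaining claims transparent.

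For well-definedness one checks case by case that $\rot'(D)$ has exactly $a$ north and $b$ east steps and stays weakly above $y = \frac{a}{b}x$. Case (1) is immediate. In Case (2) the operator shrinks the first horizontal run by one step (legitimate since $j_1 > 1$), lengthens the last by one step, and leaves the vertical runs in place but shifted; the step counts are preserved, and translating a portion of the path westward cannot bring it below the diagonal. Case (3) is where the work lies: one reassembles the four subpaths described in the definition, and the only nonobvious point is that pasting the relocated initial run $N^{i_1}$ in at the spot where the westernmost laser $\ell(P)$ met $D$ keeps the new path weakly above $y = \frac{a}{b}x$. This holds precisely because $\ell(P)$ has slope $\frac{a}{b}$: near the cut, $\rot'(D)$ runs along a translate of the old laser, which lay weakly above the diagonal because $D$ did. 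The bookkeeping that the single east step $E^{j_1} = E^1$ following $N^{i_1}$ in $D$ is ``spent'' by incrementing the last run to $E^{j_m + 1}$ is exactly the step-count verification, and the inequalities $2 \leq r \leq i_k$ guarantee that each run appearing in the reassembled word is nonempty.

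For the intertwining identity, since both $\pi(\rot'(D))$ and $\rot^{-1}(\pi(D))$ are noncrossing partitions of $[b-1]$ and each $\pi(D)$ is determined by its laser set $\ell(D)$ (the lasers being segments of the fixed slope $\frac ab$), it suffices to prove that $\ell(\rot'(D))$ is the image of $\ell(D)$ under the cyclic shift $(i,j) \mapsto (i-1, j-1)$ on $[b-1]$ induced by $\rot^{-1}$; the visibility rule transforms compatibly because it is equivariant under consistent relabeling of points and laser endpoints. In Cases (1) and (2) no element of $\ell(D)$ involves the label $1$, since the westernmost valley of $D$ has $x$-coordinate $\geq 2$; hence every valley, and so every laser, moves down by one unit under the westward translation, and the claim is immediate. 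In Case (3) the westernmost valley of $D$ sits at $x$-coordinate $1$, so under $\rot^{-1}$ its laser is exactly the arc that must wrap across the seam between labels $b-1$ and $1$; the surgery defining $\rot'$ is designed so that this wrapped arc, together with the down-shifted images of the remaining lasers, is realized as $\ell(\rot'(D))$. I would verify this by reading $\ell(\rot'(D))$ directly off the four-subpath decomposition and matching it term by term against the shifted $\ell(D)$, invoking the description of the admissible lasers $A(a,b)$ (the gap condition $j - i = \lfloor rb/a \rfloor$) to confirm that the relocated arcs still have admissible slopes.

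The main obstacle is Case (3) of the intertwining: making rigorous the slogan that ``cut $D$ along its westernmost laser and re-hinge'' on Dyck paths corresponds to ``rotate the labelled boundary circle by one click'' on noncrossing partitions, and in particular confirming that the single laser which now straddles the seam has the correct endpoints, the correct slope, and creates no crossing. Everything else reduces to bookkeeping with run lengths; this is the one place where the geometry of the laser construction genuinely enters.
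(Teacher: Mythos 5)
Your proposal is correct and takes essentially the same route as the paper: well-definedness is reduced to checking that $\rot'(D)$ stays weakly above the diagonal (with the slope-$\frac{a}{b}$ laser being the key observation in Case 3), and the intertwining identity is proved by tracking how valley lasers transform under the surgery, with the westernmost laser becoming the one that hits the terminal east step. The paper's proof is organized around the same four-subpath decomposition and the same case split on the size of the westernmost horizontal run.
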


\begin{proof}
Let $D$ be an $a,b$-Dyck path.  If the westernmost horizontal run of $D$ has size $> 1$, it is clear that 
$\rot'(D)$ is also an $a,b$-Dyck path and that the corresponding set partitions are related by
$\rot^{-1}(\pi(D)) = \pi(\rot'(D))$.  We therefore
assume that the westernmost horizontal run of $D$ consists of a single step.  

We claim that the lattice path
$\rot'(D)$ stays above the diagonal $y = \frac{a}{b} x$.  Indeed, consider the decomposition of 
$\rot'(D)$ as in Figure~\ref{fig:rotation}.  The first (green) subpath of $\rot'(D)$ stays above $y = \frac{a}{b}x$
because the laser fired from the westernmost valley of $D$ has slope $\frac{a}{b}$.
The second (black) subpath of $\rot'(D)$ is a vertical run, 
so the concatenation of the first and second subpaths 
of $\rot'(D)$ stay above $y = \frac{a}{b} x$.  The third (blue) subpath of $\rot'(D)$ is just the 
corresponding subpath of $D$ translated one unit west, and certainly stays above the line $y = \frac{a}{b} x$.
Since the fourth (red) subpath of $\rot'(D)$ is a single east step, we conclude that the entire path
$\rot'(D)$ stays above $y = \frac{a}{b} x$, and so is an $a,b$-Dyck path.

By the last paragraph, the set partition $\pi(\rot'(D)) \in \NC(a,b)$ is well defined.  We argue that 
$\rot^{-1}(\pi(D)) = \pi(\rot'(D))$.  To do this, we consider how the valley lasers of $\rot'(D)$ relate
to the corresponding valley lasers of $D$.  
\begin{itemize}
\item  The valley lasers in the third (blue) subpath of $\rot'(D)$ are just the valley lasers
 in the corresponding subpath of $D$ shifted one unit west.
\item  The valley lasers in the first (green) subpath of $\rot'(D)$ are either the valley lasers in the
corresponding subpath of $D$ shifted one unit west, or hit $\rot'(D)$ on its terminal east step, depending
on whether these lasers hit $D$ in its third (green) or fourth (blue) subpaths.
\item  The valley laser of the westernmost valley in $D$ is replaced by the laser in the valley of $\rot'(D)$ between
its first (green) and second (black) subpaths, which necessarily hits $\rot'(D)$ on its terminal east step. 
\end{itemize}
From this description of the valley lasers of $\rot'(D)$, one checks that $\rot^{-1}(\pi(D)) = \pi(\rot'(D))$,
as desired.
\end{proof}

\subsection{Characterization from Kreweras complement}
Our first characterization of rational noncrossing partitions gives a  description of their Kreweras complements.
Let $\pi$ be a noncrossing partition of $[n]$.  
The {\em Kreweras complement} $\krew(\pi)$ is the noncrossing partition obtained by drawing the $2n$ vertices
$1, 1', 2, 2', \dots, n, n'$ clockwise on the boundary of a disk in that order, drawing the blocks of 
$\pi$ on the unprimed vertices, and letting $\krew(\pi)$ be the unique coarsest partition of the primed vertices which introduces
no crossings.
The map $\krew: \NC(n) \rightarrow \NC(n)$ satisfies $\krew^2 = \rot$, and so is a bijection.

\begin{figure}
\includegraphics[scale = 0.3]{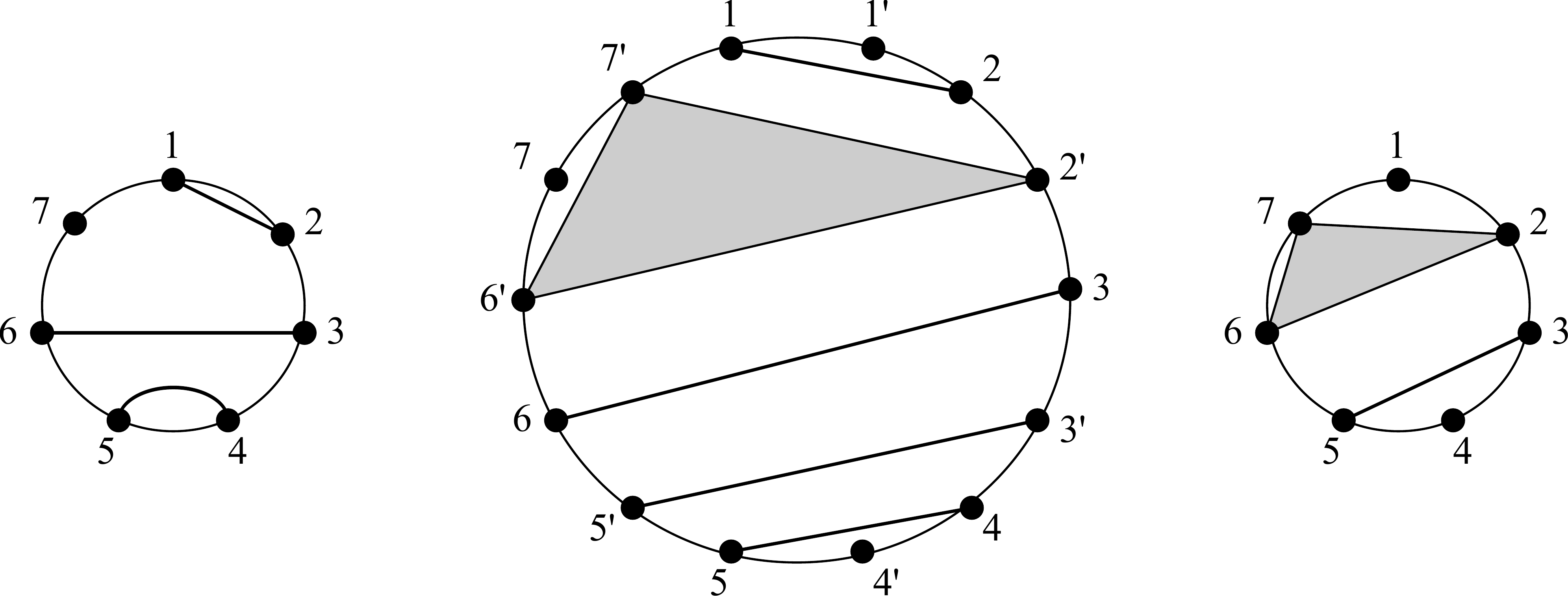}
\caption{An example of Kreweras complement.}
\label{fig:kreweras}
\end{figure}

An example of Kreweras complementation for $n = 7$ is shown in Figure~\ref{fig:kreweras}.  We have that
$\krew: \{ \{1,2\}, \{3,6\}, \{4,5\}, \{7\} \} \mapsto \{ \{1\}, \{2,6,7\}, \{3,5\}, \{4\} \}$. 

For $(a, b) \neq (n, n+1)$, the set $\NC(a,b)$  is {\em not} closed under Kreweras complement.
Indeed, the one block set partition $\{ [b-1] \}$ is contained in $\NC(a,b)$ but its Kreweras complement
(the all singletons set partition) is not.  
 On the other hand, 
the set $\NC(a,b)$ is in bijective correspondence with its Kreweras image $\krew(\NC(a,b))$.
The problem of characterizing $\NC(a,b)$ is therefore equivalent to the problem of characterizing its 
Kreweras image.

Given $\pi \in \NC(a,b)$, 
there is a simple relationship between the blocks of $\krew(\pi) \in \NC(b-1)$ and the laser set $\ell(D)$ of the 
$a,b$-Dyck path $D$ corresponding to $\pi$.  

\begin{lemma}
\label{kreweras-lasers}
Let $a < b$ be  coprime, let $\pi \in \NC(a,b)$ have corresponding $a,b$-Dyck path $D$, and let 
$\krew(\pi)$ be the Kreweras complement of $\pi$.  The laser set $\ell(D)$ is 
\begin{equation*}
\ell(D) = \{ (i, \max(B)) \,:\, \text{$B$ is a block of $\krew(\pi)$ and $i$ is a nonmaximal element of $B$} \}.
\end{equation*}
\end{lemma}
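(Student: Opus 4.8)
The plan is to prove the two inclusions separately and to work from the geometric picture, using the visibility bijection between vertical runs of $D$ and blocks of $\pi = \pi(D)$.

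First I would set up the combinatorial dictionary. Recall that the blocks of $\pi(D)$ correspond to vertical runs of $D$, with a run at height $y = i$ matched to the block whose minimum is $i+1$; the labels $1, \dots, b-1$ sit at the east ends of the non-terminal east steps (thought of as lying slightly below their lattice points). For the Kreweras complement, one draws $1, 1', 2, 2', \dots, b-1, (b-1)'$ around the disk. The key interpretive step is to read off $\krew(\pi)$ directly from the Dyck path: a laser $\ell(P) \in \ell(D)$ with $P$ a valley at $x$-coordinate $i$, ending in the interior of the east step whose west $x$-coordinate is $j$, should correspond to the primed vertices $i'$ and $j'$ lying in a common block of $\krew(\pi)$. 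Intuitively, the laser cuts the region below $D$ into pieces, and the unprimed labels immediately below the laser on either end (which are $i$ and $j$, since label $i$ sits just below the valley's east step and label $j$ sits just below the east step the laser pierces) are separated by the laser in $\pi$, so the corresponding primed vertices get merged in the complement. I would make this precise by recalling the standard fact that $\krew(\pi)$ is determined by: $i' \sim j'$ in $\krew(\pi)$ exactly when $i$ and $j$ are ``adjacent on the outer face'' of $\pi$ in the appropriate cyclic sense — and then matching that outer-face adjacency to the visibility-separation caused by a single laser.

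Then the argument splits. For the inclusion $\subseteq$: given $(i,j) \in \ell(D)$, the laser fired from the valley at $x = i$ ends on the east step with west coordinate $j$. I would argue that $j$ is the maximal element of its block $B$ in $\krew(\pi)$ and that $i$ is a nonmaximal element of that same block. The maximality of $j$ follows because the laser has the steepest admissible slope $\frac a b$ and $D$ stays above that slope, so nothing to the right of the east step at $j$ can be ``the next'' primed vertex merged with $j'$ from this laser; more carefully, the east endpoint of $\ell(P)$ is in the interior of that east step, so between label $j$ and the lattice point above it there is laser fire, making $j$ a ``top'' of a block in the complement. That $i$ is a nonmaximal element of $B$ follows since $i < j$ and the same laser connects them. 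Conversely, for $\supseteq$: let $B$ be a block of $\krew(\pi)$ with nonmaximal element $i$ and maximum $m = \max(B)$; I must produce a laser in $\ell(D)$ of the form $(i, m)$. Here one uses that consecutive elements of a block of $\krew(\pi)$ are witnessed by single lasers, and that $m$ being the maximum means the laser emanating ``from $i$'s side'' terminates exactly at the east step labeled $m$ — chaining through the intermediate elements of $B$ if necessary, or better, observing that each nonmaximal $i \in B$ is individually connected to $\max(B)$ by the structure of how lasers partition the sub-diagram region, since all lasers originating in one ``pocket'' below $D$ share the same endpoint east step.

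The main obstacle I expect is pinning down precisely why, in the complement, every nonmaximal element $i$ of a block $B$ is joined to $\max(B)$ by a laser with that exact pair $(i,\max(B)) \in \ell(D)$ — as opposed to a laser $(i, i'')$ for some intermediate $i'' \in B$. This requires understanding the nesting/containment structure of the valley lasers: two lasers in $\ell(D)$ are either nested or disjoint (never crossing), and the ones ending on a common east step are exactly those whose valleys sit in a common ``visibility pocket,'' which is precisely a block of $\krew(\pi)$. I would handle this by an induction on the number of valleys, or by directly citing the nesting structure of lasers established implicitly in the $\pi(D)$ construction, carefully tracking which east step each laser hits and checking the ``slightly below the lattice point'' convention so that the endpoint labels come out as $i$ and $\max(B)$ rather than off-by-one. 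Everything else — the two inclusions given the dictionary — should be a short verification.
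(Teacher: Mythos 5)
Your high-level approach --- viewing each laser $(i,j) \in \ell(D)$ as a noncrossing chord joining the ``primed'' boundary positions $i'$ (between labels $i$ and $i+1$) and $j'$ (between $j$ and $j+1$), and then reading off $\krew(\pi)$ from this chord diagram --- is the natural topological picture the paper is gesturing at when it dismisses the lemma as ``clear.''  You also correctly flag the crux: why does every nonmaximal $i$ in a block $B$ of $\krew(\pi)$ get joined by a laser directly to $\max(B)$ rather than to some intermediate element of $B$?  But your proposed resolution of exactly this point is where the argument has a real gap.  The assertion that lasers ending on a common east step are ``exactly those whose valleys sit in a common visibility pocket, which is precisely a block of $\krew(\pi)$'' is essentially a restatement of the conclusion, not a proof of it, and the promised ``induction on the number of valleys'' or appeal to ``the nesting structure'' is never carried out.

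Two concrete structural facts are needed and left unestablished.  First, the chord graph on $\{1', \dots, (b-1)'\}$ whose edges are the lasers is a disjoint union of \emph{stars}, each centered at a $j'$ coming from a pierced east step: each valley fires exactly one laser (so no two chords share a west endpoint $i'$), and the lattice point $(j,y')$ at the west end of a pierced east step is itself followed by an east step and hence is never a valley (so $j'$ is never the west endpoint of another chord, ruling out chains).  It then follows for free that each star's center is the maximum of its vertex set, since $i<j$ for every admissible laser.  Second, you need that the partition $\sigma$ of $\{1',\dots,(b-1)'\}$ into connected components of this chord graph actually equals $\krew(\pi(D))$; the clean way to see this is that $\pi(D)\cup\sigma$ is noncrossing (the chords are precisely the region boundaries cutting out the blocks of $\pi(D)$), while $|\pi(D)|+|\sigma| = m + \bigl((b-1)-(m-1)\bigr) = b = (b-1)+1$ where $m$ is the number of vertical runs, which forces $\sigma=\krew(\pi(D))$ by the extremal characterization of the Kreweras complement.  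Without these two steps your $\supseteq$ inclusion does not go through, and your ``outer-face adjacency'' reformulation of $\krew$ in the first paragraph remains too vague to substitute for them.
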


\begin{proof}
This is clear from the topological definition of $\pi(D)$ and Kreweras complement.
\end{proof}

Let $\pi \in \NC(b-1)$ be a noncrossing partition and suppose we would like to determine whether $\pi$
is in $\NC(a,b)$.
In light of Lemma~\ref{kreweras-lasers}, it is enough to know whether there exists an $a,b$-Dyck path $D$ whose laser
set consists of pairs $(i, \max(B))$, where $i \in B$ runs over all nonmaximal elements of all blocks $B$ of
$\krew(\pi)$.
A characterization of the laser sets of $a,b$-Dyck paths can be read off from the results of \cite{RhoadesAlexander}.
In \cite{RhoadesAlexander} this characterization was used to prove that the rational analog of the 
associahedron is a flag simplicial complex.

\begin{proposition}
\label{kreweras-characterization}
Let $a < b$ be  coprime and let $\pi$ be a noncrossing partition of $[b-1]$.  We have that $\pi$ is an $a,b$-noncrossing partition
if and only if for every block $B$ of the Kreweras complement $\krew(\pi)$ we have that
\begin{itemize}
\item  $(i, \max(B)) \in A(a,b)$ for every nonmaximal element $i \in B$, and
\item for any two nonmaximal elements $i < j$ in $B$,
\begin{equation*}
\left\lceil (\max(B) - i) \frac{a}{b} \right\rceil - \left\lceil (\max(B) - j) \frac{a}{b} \right\rceil > (j - i) \frac{a}{b}.
\end{equation*}
\end{itemize}
\end{proposition}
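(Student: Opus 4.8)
The plan is to reduce the statement, by means of Lemma~\ref{kreweras-lasers}, to a question about which sets of pairs occur as laser sets of $a,b$-Dyck paths, and then to apply the characterization of laser sets obtained in~\cite{RhoadesAlexander}. For a noncrossing partition $\pi$ of $[b-1]$, set
\[
L(\pi) := \{\, (i,\max B)\, :\, B \text{ is a block of } \krew(\pi),\ i \in B \setminus \{\max B\} \,\}.
\]
I would first note that $L(\pi)$ determines $\krew(\pi)$, and hence $\pi$: distinct blocks of $\krew(\pi)$ have distinct maxima since they are disjoint, so grouping the pairs of $L(\pi)$ by their second coordinate recovers all blocks of size $\ge 2$, and the elements of $[b-1]$ appearing in no pair of $L(\pi)$ are precisely the singleton blocks; now invoke the bijectivity of $\krew$. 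With this the proposition becomes: for $\pi \in \NC(b-1)$ we have $\pi \in \NC(a,b)$ if and only if $L(\pi) = \ell(D)$ for some $a,b$-Dyck path $D$. The forward direction of this equivalence is immediate from Lemma~\ref{kreweras-lasers}; for the reverse, if $\ell(D) = L(\pi)$ for some $D$ then $\pi(D) \in \NC(a,b)$ and Lemma~\ref{kreweras-lasers} gives $L(\pi(D)) = \ell(D) = L(\pi)$, forcing $\pi(D) = \pi$ by the previous sentence.

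For the forward implication of the proposition, assume $\pi = \pi(D) \in \NC(a,b)$, so $\ell(D) = L(\pi)$. Since $(i,j) \in \ell(D)$ implies $(i,j) \in A(a,b)$ (the slope observation of Section~\ref{Background}), the first bulleted condition holds. For the second, fix a block $B$ of $\krew(\pi)$, put $M := \max B$, and let $i < j$ be nonmaximal elements of $B$. Both $(i,M)$ and $(j,M)$ lie in $\ell(D)$, so the lasers fired from the valleys of $D$ with $x$-coordinates $i$ and $j$ both terminate in the interior of the unique east step of $D$ whose west endpoint has $x$-coordinate $M$; let $h$ be the height of that step and let $y_i$, $y_j$ be the heights of those two valleys. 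Because $1 \le M - i \le b-2$ and $a,b$ are coprime, the number $(M-i)\frac{a}{b}$ is not an integer; and since the laser has slope $\frac{a}{b}$ and ends strictly inside an east step, the positive integer $h - y_i$ lies in the interval $\bigl((M-i)\frac{a}{b},\ (M-i)\frac{a}{b} + \frac{a}{b}\bigr)$, which (as $a<b$ makes it shorter than $1$, and it sits inside $\bigl((M-i)\frac{a}{b},(M-i)\frac{a}{b}+1\bigr)$) forces $h - y_i = \bigl\lceil (M-i)\frac{a}{b}\bigr\rceil$; likewise $h - y_j = \bigl\lceil (M-j)\frac{a}{b}\bigr\rceil$. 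Since $j < M$, the laser from $x$-coordinate $i$ has not yet met $D$ by the time it reaches $x = j$, so it lies strictly below the valley there: $y_i + (j-i)\frac{a}{b} < y_j$. Subtracting the two height identities and using this gives
\[
\Bigl\lceil (M-i)\tfrac{a}{b}\Bigr\rceil - \Bigl\lceil (M-j)\tfrac{a}{b}\Bigr\rceil = y_j - y_i > (j-i)\tfrac{a}{b},
\]
which is the second condition.

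For the converse, I would assume $\pi \in \NC(b-1)$ satisfies the two conditions and construct an $a,b$-Dyck path $D$ with $\ell(D) = L(\pi)$; by the reduction above this gives $\pi(D) = \pi$, hence $\pi \in \NC(a,b)$. By the characterization of laser sets in~\cite{RhoadesAlexander} it suffices to check that every pair of $L(\pi)$ lies in $A(a,b)$ — the first condition — and that the pairs of $L(\pi)$ are pairwise compatible in the sense of that characterization. Two pairs coming from the same block $B$ of $\krew(\pi)$ share the second coordinate $\max B$, and I expect the compatibility requirement for such a pair to be exactly the stacking inequality of the second condition, recovered by running the height computation of the previous paragraph in reverse to produce the two required valleys. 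Two pairs $(i_1, M_1)$, $(i_2, M_2)$ coming from distinct blocks $B_1 \ne B_2$ have distinct second coordinates, and the compatibility requirement should reduce to the chords $\{i_1, M_1\}$ and $\{i_2, M_2\}$ being noncrossing; but a crossing would yield, after possibly exchanging the two pairs, a chain $i_1 < i_2 < M_1 < M_2$ with $i_1, M_1 \in B_1$ and $i_2, M_2 \in B_2$, so that $B_1$ and $B_2$ cross as blocks of the noncrossing partition $\krew(\pi)$ — a contradiction.

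The main obstacle is this converse direction: one must pin down the precise form of the compatibility relation on $A(a,b)$ implicit in~\cite{RhoadesAlexander} and verify that, on the structured laser sets $L(\pi)$ arising from noncrossing partitions, it collapses exactly to the second condition within each block of $\krew(\pi)$ together with noncrossingness between distinct blocks, with no leftover across-block inequality. (Alternatively one could attempt to build $D$ directly as the lowest lattice path above $y = \frac{a}{b}x$ all of whose prescribed lasers fire to the prescribed east steps, but verifying that such a $D$ has laser set exactly $L(\pi)$ — with no extra valleys and no missing ones — appears to require essentially the same analysis.)
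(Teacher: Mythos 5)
Your overall strategy is exactly the paper's: reduce via Lemma~\ref{kreweras-lasers} to the question of which collections of admissible lasers are laser sets of $a,b$-Dyck paths, then appeal to the characterization in \cite{RhoadesAlexander}. Your forward direction is actually more self-contained than the paper's, which is a nice feature: the paper derives both directions from citations, whereas you give an explicit slope computation showing that the height gap between the valley at $i$ and the target east step is precisely $\lceil(\max B - i)\frac{a}{b}\rceil$, and that the laser from $i$ must pass strictly below the valley at $j$, yielding the displayed strict inequality directly (with coprimality supplying strictness). That argument is sound.

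The gap you flag in the converse is real, and it is exactly the gap that the paper closes by citing specific results: \cite[Proposition 1.3, Lemma 4.3]{RhoadesAlexander} supply the flag property (a collection of admissible lasers is jointly realizable iff each pair is), and \cite[Lemma 5.3, Proposition 5.5]{RhoadesAlexander} supply the two-laser criterion (a pair $(i,M),(j,M')$ from the noncrossing configuration $L(\pi)$ fails to be jointly realizable iff $M = M'$ and $\lceil(M-i)\frac{a}{b}\rceil - \lceil(M-j)\frac{a}{b}\rceil < (j-i)\frac{a}{b}$ for $i<j$). Your anticipated form of the compatibility relation is correct, and your observation that crossing chords cannot arise from $\krew(\pi)$ noncrossing is the reason the across-block case gives no extra constraint. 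The only small wrinkle the paper adds beyond what you wrote is a final coprimality remark: $\lceil(M-i)\frac{a}{b}\rceil - \lceil(M-j)\frac{a}{b}\rceil$ can never equal $(j-i)\frac{a}{b}$, so the negation of the failure condition "$<$" is exactly the "$>$" in the proposition statement. So the proposal is on the right track, but to be a complete proof you need to actually invoke the specific two-laser compatibility result rather than leaving it as an expectation; without that citation (or a from-scratch proof of it, which you correctly observe would require essentially the same analysis), the converse is incomplete.
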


\begin{proof}
As mentioned above, we know that $\pi \in \NC(a,b)$ if and only if there exists an $a,b$-Dyck path $D$ 
whose laser set consists of all possible pairs $(i, \max(B))$, where $B$ ranges over all 
blocks of $\krew(\pi)$ and $i$ ranges over all nonmaximal elements of $B$.  
For such an $a,b$-Dyck path $D$ to exist, it is certainly necessary that $(i, \max(B)) \in A(a,b)$ always, so we assume this
condition holds.

By \cite[Proposition 1.3]{RhoadesAlexander} and
\cite[Lemma 4.3]{RhoadesAlexander}, we know that  an $a,b$-Dyck path $D$ as in the previous paragraph 
exists if and only if
for every two  pairs $(i, \max(B))$ and $(i', \max(B'))$, there exists an $a,b$-Dyck path
$\overline{D}$ with laser set $\ell(\overline{D})  = \{(i, \max(B)), (j, \max(B')) \}$.
By \cite[Lemma 5.3]{RhoadesAlexander} and \cite[Proposition 5.5]{RhoadesAlexander}, such an
$a,b$-Dyck path $\overline{D}$ does {\em not} exist if and only if $\max(B) = \max(B')$ (so that $B = B'$) and 
$\left\lceil (\max(B) - i) \frac{a}{b} \right\rceil - \left\lceil (\max(B) - j) \frac{a}{b} \right\rceil < (j - i) \frac{a}{b}$, where
$i < j$.  Since $1 \leq i < j < b-1$, 
we have
$\left\lceil (\max(B) - i) \frac{a}{b} \right\rceil - \left\lceil (\max(B) - j) \frac{a}{b} \right\rceil \neq (j - i) \frac{a}{b}$
and the result follows.
\end{proof}

\begin{example}
Let $(a,b) = (5,8)$ and consider the following three partitions in $\NC(7)$:
\begin{align*}
\pi_1 &:= \{ \{1\}, \{2,6,7\}, \{3\}, \{4,5\} \}, \\
\pi_2 &:= \{ \{1\}, \{2,6,7\}, \{3,4\}, \{5\} \}, \\
\pi_3 &:= \{ \{1\}, \{2,7\}, \{3,4,5\}, \{6\} \}.
\end{align*}
To determine which of $\pi_1, \pi_2, \pi_3$ belong to $\NC(5,8)$, we start by computing their Kreweras images:
\begin{align*}
\krew(\pi_1) &:= \{ \{1,7\}, \{2,3,5\}, \{4\}, \{6\} \}, \\
\krew(\pi_2) &:= \{ \{1,7\}, \{2,4,5\}, \{3\}, \{6\} \}, \\
\krew(\pi_3) &:= \{ \{1,7\}, \{2,5,6\}, \{3\}, \{4\} \}.
\end{align*}
Since $\{2,3,5\}$ is a block of $\krew(\pi_1)$ and $(3,5) \notin A(5,8)$, we conclude that $\pi_1 \notin \NC(5,8)$.
Since $\{2,4,5\}$ is a block of $\krew(\pi_2)$ and 
$\left \lceil (5-2) \frac{5}{8} \right \rceil - \left \lceil (5-4) \frac{5}{8} \right \rceil \leq (4-2) \frac{5}{8},$
we conclude that $\pi_2 \notin \NC(5,8)$.  
Since both of the bullets in Proposition~\ref{kreweras-characterization} hold for 
$\pi_3$, we conclude that $\pi_3 \in \NC(5,8)$.
\end{example}

\subsection{$a,b$-ranks}
In order to state our second characterization of $a,b$-noncrossing partitions, we introduce a new
way to measure the `size' of a block $B$ of a noncrossing partition $\pi$ other than its cardinality $|B|$.
Our rational analog of block size is as follows.

\begin{defn}
\label{rank-definition}
Let $a < b$ be coprime positive integers and let $\pi \in \NC(b-1)$ be a noncrossing partition of $[b-1]$.
We assign an integer $\rank^{\pi}_{a,b}(B) \in \ZZ$ to every block $B$ of $\pi$ 
 (or just $\rank(B)$ when $a, b,$ and $\pi$ are clear from context)
by the following recursive procedure.  Let $\preceq$ be the partial order on the blocks of $\pi$ defined
by 
\begin{equation*}
B' \preceq B \Longleftrightarrow [\min(B'), \max(B')] \subseteq [\min(B), \max(B)].
\end{equation*}
The integers $\rank^{\pi}_{a,b}(B)$ are implicitly determined by the  formula
\begin{equation}
\sum_{B' \preceq B} \rank^{\pi}_{a,b}(B') = \left \lceil(\max(B) - \min(B) + 1)\frac{a}{b} \right \rceil,
\end{equation}
for all blocks $B \in \pi$.
\end{defn}

While we define $\rank^{\pi}_{a,b}(\cdot)$ as a function on the blocks of an arbitrary noncrossing 
partition of $[b-1]$, this notion of rank will be more useful for $a,b$-noncrossing partitions. 
We will see that rank is more useful than cardinality as a block size measure in rational Catalan theory.
This subsection proves basic properties of the function $\rank^{\pi}_{a,b}(\cdot)$ on the blocks of 
$a,b$-noncrossing partitions.

In the Fuss-Catalan case $b \equiv 1$ (mod $a$), rank and cardinality are equivalent.

\begin{proposition}
\label{ranks-fuss}
Let $(a, b) = (n, mn+1)$,
let $\pi \in \NC(n,mn+1)$ be an $m$-divisible noncrossing partition of $[mn]$, and let $B$ be a block of $\pi$.  We have that
\begin{equation*}
\rank^{\pi}_{n,mn+1}(B) = \frac{|B|}{m}.
\end{equation*}
\end{proposition}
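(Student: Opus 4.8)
The plan is to prove the formula by strong induction on the partial order $\preceq$ restricted to the blocks of $\pi$. Since $\pi$ is $m$-divisible, every block $B$ has $|B| = m \cdot \ell$ for some positive integer $\ell$, and I want to show $\rank^{\pi}_{n,mn+1}(B) = \ell$. The defining equation of rank says
\begin{equation*}
\sum_{B' \preceq B} \rank^{\pi}_{n,mn+1}(B') = \left\lceil (\max(B) - \min(B) + 1)\frac{n}{mn+1} \right\rceil,
\end{equation*}
so by induction it suffices to evaluate the right-hand side and check that it equals $\sum_{B' \preceq B} \frac{|B'|}{m}$.

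The key computation is the ceiling. Write $L := \max(B) - \min(B) + 1$; this is the length of the "interval span" of $B$. Since $\pi$ is noncrossing and $m$-divisible, the blocks $B' \preceq B$ together with the "gaps" between consecutive elements of $B$ partition the interval $[\min(B), \max(B)]$: more precisely, every integer in $[\min(B),\max(B)]$ either lies in a block $B' \preceq B$, and every such block is entirely contained in this interval. Hence $L = \sum_{B' \preceq B} |B'|$, which is a multiple of $m$, say $L = m s$ where $s = \sum_{B' \preceq B} \frac{|B'|}{m}$. Now $L \cdot \frac{n}{mn+1} = \frac{msn}{mn+1} = s \cdot \frac{mn+1-1}{mn+1} = s - \frac{s}{mn+1}$. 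Since $1 \leq s \leq n$ (because $L \leq b-1 = mn$ forces $s \leq n$), we have $0 < \frac{s}{mn+1} < 1$, so $\left\lceil L \frac{n}{mn+1} \right\rceil = \left\lceil s - \frac{s}{mn+1} \right\rceil = s$. Therefore the defining equation reads $\sum_{B' \preceq B} \rank^{\pi}_{n,mn+1}(B') = s = \sum_{B' \preceq B} \frac{|B'|}{m}$, and subtracting the inductive hypothesis for all $B' \precneq B$ gives $\rank^{\pi}_{n,mn+1}(B) = \frac{|B|}{m}$.

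The main obstacle — really the only thing requiring care — is justifying the identity $L = \sum_{B' \preceq B} |B'|$, i.e. that the interval $[\min(B), \max(B)]$ is the disjoint union of exactly those blocks $B'$ with $[\min(B'),\max(B')] \subseteq [\min(B),\max(B)]$. One must check both inclusions: a block $B'$ with $B' \preceq B$ clearly has all its elements in $[\min(B),\max(B)]$; conversely, if $j \in [\min(B),\max(B)]$ lies in some block $B'$, then $B'$ cannot "escape" the interval without crossing $B$ (here noncrossingness of $\pi$ is used, together with $\min(B), \max(B) \in B$), so $[\min(B'),\max(B')] \subseteq [\min(B),\max(B)]$ and hence $B' \preceq B$. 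I would state this as a short lemma-free paragraph inside the proof. The bound $s \leq n$ follows since $L \le \max([b-1]) - \min([b-1]) + 1 = b-1 = mn$, so it is automatic; I will note it explicitly to justify the ceiling evaluation. Everything else is routine arithmetic with the specific values $a = n$, $b = mn+1$.
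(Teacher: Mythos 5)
Your proof is correct and follows essentially the same approach as the paper: both evaluate the ceiling $\left\lceil (\max(B)-\min(B)+1)\frac{n}{mn+1}\right\rceil$ using the divisibility $m \mid (\max(B)-\min(B)+1)$ and then verify that $B \mapsto |B|/m$ satisfies the defining recursion. You simply make explicit two things the paper treats as obvious — the strong induction on $\preceq$ that justifies uniqueness of the solution to the recursion, and the noncrossing-partition fact that $[\min(B),\max(B)]$ is the disjoint union of the blocks $B' \preceq B$ (which is what the paper implicitly uses both for the divisibility claim and for the recursion check).
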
 

\begin{proof}
For any block $B$ of $\pi$, we have that $1 \leq \min(B) \leq \max(B) \leq mn$ and the divisibility relation
$m | (\max(B) - \min(B) + 1)$, so that 
\begin{equation*}
 \left\lceil (\max(B) - \min(B) + 1)\frac{n}{mn+1} \right\rceil = \frac{\max(B) - \min(B) + 1}{m}.
\end{equation*}
The assignment $B \mapsto \frac{|B|}{m}$ therefore satisfies the recursion for 
$\rank^{\pi}_{n,mn+1}(\cdot)$.
\end{proof}

For general $a < b$ coprime, the $a,b$-rank  of a block of an $a,b$-noncrossing partition
$\pi \in \NC(a,b)$ is {\em not} necessarily determined by its cardinality.  
For example, consider $(a,b) = (3,5)$.  Then $\pi = \{ \{1,3\}, \{2\}, \{4\} \} \in \NC(3,5)$.  We have that
\begin{equation*}
\rank^{\pi}_{3,5}(\{1,3\}) = \rank^{\pi}_{3,5}(\{2\}) = \rank^{\pi}_{3,5}(\{4\}) = 1.
\end{equation*}
Proposition~\ref{ranks-fuss} shows that the divergence
between rank and cardinality  is a genuinely new feature of rational Catalan combinatorics
and is invisible at the Catalan and Fuss-Catalan levels of generality.

\begin{defn}
If $\pi \in \NC(a,b)$ is an $a,b$-noncrossing partition,  the {\em rank sequence} $R(\pi) = (r_1, \dots, r_{b-1})$ 
of $\pi$ is the sequence of nonnegative integers given by
\begin{equation*}
r_i := 
\begin{cases}
\rank(B) & \text{if $i = \min(B)$ for some block $B$ of $\pi$,} \\
0 & \text{otherwise.}
\end{cases}
\end{equation*}
\end{defn}

For example, the rank sequence of the $5,8$-noncrossing partition shown in Figure~\ref{fig:nc-example}
has rank sequence $(1,1,0,2,1,0,0)$.
Observe that this is also the sequence of vertical runs of the corresponding $5,8$-Dyck path, so that the rank
sequence is equivalent to the Dyck path.  
This is a general phenomenon.

\begin{proposition}
\label{recover-dyck-path}
Let $a < b$ be  coprime, let $\pi$ be an $a,b$-noncrossing partition with rank sequence 
$R(\pi) = (r_1, \dots, r_{b-1})$, and let $D$ be the $a,b$-Dyck path associated to $\pi$.

\begin{enumerate}
\item
For any block $B$ of $\pi$, the vertical run of $D$ visible from $\pi$ has size $\rank(B)$.
\item
The Dyck path $D$ is given by
\begin{equation*}
D = N^{r_1} E N^{r_2} E \cdots N^{r_{b-1}}E E.
\end{equation*}
\end{enumerate}
\end{proposition}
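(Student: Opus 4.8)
The plan is to establish the two parts in sequence, with part (1) doing most of the work and part (2) following as an easy bookkeeping consequence. The central observation I would make at the outset is that both sides of the recursion in Definition~\ref{rank-definition} have a natural reading on the Dyck path side: the right-hand side $\lceil (\max(B) - \min(B) + 1)\frac{a}{b}\rceil$ records how high the diagonal $y = \frac{a}{b}x$ has climbed over the horizontal span of the block $B$, while the left-hand side, summing $\rank(B')$ over $B' \preceq B$, should match the total number of north steps of $D$ lying underneath (or nested inside) the laser structure associated to $B$. The visibility bijection of Section~\ref{Background} matches a block $B$ with $\min(B) = i+1$ to the vertical run of $D$ at height $y = i$; so the goal of part (1) is to show that the size of that vertical run, together with the sizes of the vertical runs visible from blocks $B' \preceq B$, sums to $\lceil(\max(B)-\min(B)+1)\frac{a}{b}\rceil$.

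For the induction I would order the blocks of $\pi$ by the partial order $\preceq$ and induct from the $\preceq$-minimal blocks upward. For a $\preceq$-minimal block $B$ (one containing no other block's interval in its span), I need to show the single vertical run visible from $B$ has size exactly $\lceil(\max(B)-\min(B)+1)\frac{a}{b}\rceil$; here I would use the laser construction directly. The key geometric fact is that if $B = \{j\}$ is a singleton (so $\max(B)=\min(B)=j$) then $1\cdot\frac{a}{b}$ rounds up to $1$, and indeed the visible vertical run has size at least $1$; and if $B$ has larger span, the laser fired from the valley at the base of its vertical run travels with slope $\frac{a}{b}$ across exactly the horizontal span corresponding to $B$, forcing the vertical run height to absorb the ceiling of $(\max(B)-\min(B)+1)\frac{a}{b}$ — but only after the contributions of nested blocks are subtracted off, which is precisely the inductive step. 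Concretely, for a general block $B$ I would look at the valley laser $\ell(P)$ with $P$ at the base of the vertical run visible from $B$: by Lemma~\ref{kreweras-lasers} combined with the laser-set description, the laser starting at $x$-coordinate $\min(B)-1$ ends at $x$-coordinate $\max(B)$, and the number of north steps of $D$ strictly between the firing point and the impact point of this laser, plus the already-counted heights below, must equal the vertical rise of the diagonal over that interval — this is exactly the recursion defining rank, with the nested blocks $B' \prec B$ accounting for the intermediate vertical runs.

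Once part (1) is established, part (2) is immediate: the visibility bijection sends the block with minimum $i+1$ to the vertical run at height $i$, so the vertical run immediately following the $i$-th non-terminal east step has size equal to the rank of the block with minimum $i+1$, which is $r_i$ by the definition of the rank sequence $R(\pi) = (r_1,\dots,r_{b-1})$; if no block has minimum $i+1$ then there is no vertical run at that height, consistent with $r_i = 0$. Writing out $D$ step by step — alternating $N^{r_i}$ with single east steps for $i = 1,\dots,b-1$, then a final $EE$ to account for the terminal east step (the $b$-th east step) and the fact that there are $b$ east steps total but only $b-1$ labeled ones, together with $\sum r_i = a$ north steps — yields $D = N^{r_1}E N^{r_2}E \cdots N^{r_{b-1}}EE$.

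The main obstacle I anticipate is the inductive step in part (1): carefully relating the slope-$\frac{a}{b}$ laser geometry to the ceiling function and verifying that the vertical runs corresponding to the $\preceq$-nested blocks $B' \prec B$ are exactly the ones counted between the firing and impact points of $B$'s laser. This requires showing that no "extra" vertical runs sneak in and none are missed — essentially that the $\preceq$-order on blocks mirrors the nesting of laser segments under $D$. I would handle this by a direct analysis of which valleys lie in the region cut off by $\ell(P)$, using the noncrossing property and the coprimality-forced fact that laser endpoints land in the interiors of east steps.
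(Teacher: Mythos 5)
Your overall plan matches the paper's: prove part (1) by working up the $\preceq$-poset from its minimal elements and then read off part (2) via the visibility bijection. The base case is handled in the paper by showing that the laser from the valley at $(\min(B)-1, y_P)$ \emph{crosses} the horizontal run at height $c$ (the run carrying the labels of a $\preceq$-minimal $B$) in the open $x$-interval $(\max(B), \max(B)+1)$, which pins down $i_B$ exactly; your sketch of this is in the right spirit, though vaguer about why the crossing lands in that particular unit window.

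Where your proposal goes wrong is the inductive step, which you build on the assertion ``by Lemma~\ref{kreweras-lasers} combined with the laser-set description, the laser starting at $x$-coordinate $\min(B)-1$ ends at $x$-coordinate $\max(B)$.'' This is not what Lemma~\ref{kreweras-lasers} says, and it is false in general. That lemma says the laser from $i$ ends at $\max(\widehat B)$ where $\widehat B$ is the block of the \emph{Kreweras complement} $\krew(\pi)$ containing $i$, and $\max(\widehat B)$ need not equal $\max(B)$ for the $\pi$-block $B$ with $\min(B)-1 = i$. Concretely, take $(a,b)=(5,6)$ and $\pi = \{\{1,5\},\{2\},\{3,4\}\}$, an ordinary noncrossing partition of $[5]$. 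The associated Dyck path has rank sequence $R(\pi)=(2,1,2,0,0)$, and the laser fired from the valley at $x=1$ (the valley below the vertical run visible from $B=\{2\}$) hits $D$ on the east step with west $x$-coordinate $4$, not $2 = \max(B)$. Indeed $\krew(\pi) = \{\{1,2,4\},\{3\},\{5\}\}$, so Lemma~\ref{kreweras-lasers} predicts the endpoint $4$, not $\max(B)$. Because the laser keeps going past $\max(B)$, the ``vertical runs between firing and impact'' are not the runs for the blocks $B' \prec B$ alone; the count you want to set equal to $\lceil(\max(B)-\min(B)+1)\frac{a}{b}\rceil$ is not what the laser geometry directly gives you, and the argument stalls.

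The paper sidesteps this entirely in the inductive step: rather than tracing the laser past $\max(B)$, it merges all blocks $B' \preceq B$ into a single interval block to form $\pi_0$, invokes closure under coarsening (Proposition~\ref{nc-basic-facts}(2)) to ensure $\pi_0 \in \NC(a,b)$, and notes that the corresponding Dyck path $D_0$ collapses the stretch over $[\min(B),\max(B)]$ into one vertical run of size $\sum_{B'\preceq B} i_{B'}$. The merged block is $\preceq$-minimal in $\pi_0$, so the base case applies to it and forces the sum to be the ceiling; the inductive hypothesis then isolates $i_B$. If you want to salvage a direct geometric argument, you would need to show instead that $D$ sits at height exactly $y_P + \lceil(\max(B)-\min(B)+1)\frac{a}{b}\rceil$ at $x=\max(B)$ without presuming the laser terminates there, which is essentially what the merging trick accomplishes cleanly. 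Separately, a minor indexing slip in part (2): $r_i$ is the rank of the block whose minimum is $i$ (not $i+1$), so $r_i$ is the length of the vertical run following the $(i-1)$st east step; your final formula for $D$ is nevertheless correct.
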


\begin{proof}
For any block $B$ of $\pi$, we have that $\min(B)$ labels the lattice point just
to the right of 
the vertical run labeled by $B$.  Therefore, (2) follows from (1).

To prove (1), recall the partial order $\preceq$ on the blocks of $\pi$.  If $B$ is minimal with respect to
$\preceq$, then $B = [\min(B), \max(B)]$ is an interval.  The labels of $B$ must appear on a single 
horizontal run of $D$, say on the line $y = c$.  Let $i_B$ be the length of the vertical run visible from $B$,
let $P$ be the valley at the bottom of this vertical run, and let $\ell(P)$ be the laser fired from $P$.

The laser $\ell(P)$ must intersect the line $y = c$.  We claim that it does so in the open $x$-interval
$(\max(B), \max(B) + 1)$.  Indeed, by coprimality there exists a unique lattice point $Q$ on $D$
which is northwest of the laser $\ell(P)$ and  has minimum horizontal distance to the laser $\ell(P)$.
Let $m$ be the $x$-coordinate of $Q$.  We claim that $m \in B$.  Indeed, if $m \notin B$, there must be 
a laser $\ell(Q')$ fired from a valley $Q'$ which separates $m$ from $B$. But then $Q'$ would be 
closer than $Q$ to $\ell(P)$, so we conclude that $m \in B$.  
Since $B = [\min(B), \max(B)]$ is an interval, we have that $\ell(P)$ intersects $y = c$ in the open 
$x$-interval $(\max(B), \max(B) + 1)$.  This implies that $\rank(B) = i_B$, as desired.

Now suppose that $B$ is not $\preceq$-minimal among the blocks of $\pi$.  Then the interval
$[\min(B), \max(B)]$ is a union of at least two blocks of $\pi$.   For any block $B'$ contained in this interval,
let $i_{B'}$ denote the size of the vertical run visible from $B'$.  We may inductively assume that,
for all $B' \neq B$, we have that
\begin{equation*}
\rank^{\pi}_{a,b}(B') = i_{B'}.
\end{equation*}

Let $\pi_0$ be the set partition obtained from $\pi$ by merging the blocks contained in $[\min(B), \max(B)]$.
Then $\pi_0$ is noncrossing, and hence
$a,b$-noncrossing by Proposition~\ref{nc-basic-facts} (2). The recursion for $\rank$
says that
\begin{equation*}
\rank^{\pi_0}_{a,b}([\min(B), \max(B)]) = \sum_{B' \subseteq [\min(B), \max(B)]} \rank^{\pi}_{a,b}(B)
= \rank^{\pi}_{a,b}(B) + \sum_{\substack{B' \subseteq [\min(B), \max(B)] \\ B' \neq B}} i_{B'},
\end{equation*}
where the second equality used the inductive hypothesis.  Moreover, the Dyck path $D_0$
corresponding to $\pi_0$ is obtained from $D$
by replacing the portion between the $x$-coordinates $\min(B)$ and
$\max(B)$ with a single vertical run, followed by a single horizontal run.  In particular, the size 
of the vertical run in $D_0$ visible from $[\min(B), \max(B)]$ is $\sum_{B' \subseteq [\min(B), \max(B)]} i_{B'}$.
This forces $\rank^{\pi}_{a,b}(B) = i_B$, as desired.
\end{proof}

The cardinality function $| \cdot |$ on blocks of noncrossing partitions satisfies the following two properties.
\begin{itemize}
\item  Let $\pi$ and $\pi'$ be noncrossing partitions such that $\pi$ refines $\pi'$.  For any block $B'$ 
of $\pi'$, write $B' = B_1 \uplus \cdots \uplus B_k$, where $B_1, \dots, B_k$ are blocks of $\pi$.  We have
\begin{equation*}
|B'| = |B_1| + \cdots + |B_k|.
\end{equation*} 
\item  The function $| \cdot |$ on blocks of noncrossing set partitions is invariant under the dihedral action
of $\langle \rot, \rfn \rangle$.
\end{itemize}
To  justify our intuition that rank measures size, 
we prove that $\rank^{\pi}_{a,b}(\cdot)$ enjoys the same properties on the set of $a,b$-noncrossing partitions.

\begin{proposition}
\label{ranks-add}
Let $a < b$ be coprime, let $\pi, \pi' \in \NC(a,b)$, and assume that $\pi$ refines $\pi'$.  Let $B'$ be a block
of $\pi'$ and let $B_1, \dots, B_k$ be the blocks of $\pi$ such that $B' = B_1 \uplus \cdots \uplus B_k$.  We have
\begin{equation*}
\rank^{\pi'}_{a,b}(B') = \rank^{\pi}_{a,b}(B_1) + \cdots + \rank^{\pi}_{a,b}(B_k).
\end{equation*}
\end{proposition}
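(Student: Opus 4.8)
The plan is to exploit Proposition~\ref{recover-dyck-path}, which identifies $\rank^{\pi}_{a,b}(B)$ with the size of the vertical run of the associated Dyck path visible from $B$. Since $\pi$ refines $\pi'$, the coarsening $\pi'$ is obtained from $\pi$ by a sequence of merges; by induction it suffices to treat the case where $\pi'$ is obtained from $\pi$ by merging exactly the blocks $B_1, \dots, B_k$ into a single block $B'$. Let $D$ and $D'$ be the $a,b$-Dyck paths corresponding to $\pi$ and $\pi'$ respectively. By Proposition~\ref{nc-basic-facts}(2) both paths exist, and the geometry of the laser construction shows that $D'$ is obtained from $D$ by replacing the portion of $D$ lying between the $x$-coordinates $\min(B')$ and $\max(B')$ with a single vertical run followed by a single horizontal run — this is exactly the local move already analyzed in the proof of Proposition~\ref{recover-dyck-path}. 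The size of that single vertical run in $D'$ is the sum of the sizes of the vertical runs of $D$ that lay in the replaced portion, and those are precisely the runs visible from $B_1, \dots, B_k$.

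The key steps in order are: (1) reduce to the one-step merge case by induction on the number of blocks being merged (or equivalently on $k$); (2) invoke Proposition~\ref{recover-dyck-path}(1) to reinterpret both sides in terms of vertical-run sizes of $D$ and $D'$; (3) describe $D'$ explicitly in terms of $D$ via the replace-by-a-single-run move, using the rank-sequence description $D = N^{r_1}E N^{r_2} E \cdots N^{r_{b-1}}EE$ from Proposition~\ref{recover-dyck-path}(2) to make the bookkeeping transparent: merging $B_1, \dots, B_k$ replaces the segment $N^{r_{\min(B')}} E \cdots N^{r_{\max(B')}}$ by $N^{r_{\min(B')} + \cdots + r_{\max(B')}}\, E^{(\text{something})}$, and only the entries $r_i$ with $i = \min(B_j)$ for some $j$ are nonzero among these; (4) conclude that the vertical run of $D'$ visible from $B'$ has size $\rank^{\pi}_{a,b}(B_1) + \cdots + \rank^{\pi}_{a,b}(B_k)$, which by another application of Proposition~\ref{recover-dyck-path}(1) equals $\rank^{\pi'}_{a,b}(B')$.

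Alternatively — and perhaps more cleanly — one can argue directly from the defining recursion in Definition~\ref{rank-definition} without passing through Dyck paths at all. The blocks $B' \preceq_{\pi'} C'$ of $\pi'$ correspond, after refining, to the blocks $B \preceq_{\pi} C$ of $\pi$ with $[\min(B),\max(B)] \subseteq [\min(C'),\max(C')]$; since $\min(B') = \min(B_1)$ (say) and $\max(B') = \max(B_k)$ after reindexing so the $B_j$ are ordered, the right-hand side $\lceil (\max(B') - \min(B') + 1)\frac{a}{b}\rceil$ of the recursion for $\pi'$ at $B'$ agrees with the same quantity for $\pi$ at the merged interval. Summing the $\pi$-recursion over all $\pi$-blocks contained in $[\min(B'),\max(B')]$ and matching it against the $\pi'$-recursion for $B'$ and its $\preceq_{\pi'}$-predecessors, one extracts the claimed additivity by induction on the $\preceq_{\pi'}$-order. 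The main obstacle in either approach is the careful indexing: one must verify that the interval $[\min(B'),\max(B')]$ is exactly partitioned (as a set of integers, via the noncrossing condition) into the intervals underneath $B_1, \dots, B_k$ together with the intervals of any other $\pi$-blocks nested strictly below them, and that the recursion's telescoping respects this. I expect the Dyck-path route to be the safer one to write out, since the replace-by-a-single-run move is already established in the proof of Proposition~\ref{recover-dyck-path}.
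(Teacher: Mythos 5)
Your primary (Dyck-path) approach is aimed at the right tools, but the key geometric step is wrong. You claim that $D'$ is obtained from $D$ by ``replacing the portion of $D$ lying between the $x$-coordinates $\min(B')$ and $\max(B')$ with a single vertical run followed by a single horizontal run,'' and that consequently among the entries $r_{\min(B')}, \dots, r_{\max(B')}$ of the rank sequence of $\pi$ ``only the entries $r_i$ with $i = \min(B_j)$ for some $j$ are nonzero.'' Both claims fail whenever $B'$ is not an interval. The replace-by-a-single-run move that appears in the proof of Proposition~\ref{recover-dyck-path} corresponds to merging \emph{every} block of $\pi$ whose closure lies in $[\min(B'), \max(B')]$, not just the blocks $B_1, \dots, B_k$. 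If $\pi$ has a block $C$ nested strictly inside $[\min(B'), \max(B')]$ with $C \neq B_j$ for all $j$, then $C$ survives as a block of $\pi'$ and the entry $r_{\min(C)}$ is nonzero; the move you describe would erroneously absorb the vertical run of $C$ into the new run. A concrete counterexample: $(a,b) = (3,5)$, $\pi = \{\{1,3\},\{2\},\{4\}\}$ with rank sequence $(1,1,0,1)$, merging $B_1 = \{1,3\}$ and $B_2 = \{4\}$ to get $\pi' = \{\{1,3,4\},\{2\}\}$ with rank sequence $(2,1,0,0)$; here $r_2 = 1 \neq 0$ and $2$ is not a block minimum among $\{1,4\}$, and collapsing the path segment from $x = 1$ to $x = 4$ to a single vertical run does not give the Dyck path of $\pi'$. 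The paper's description of the move is different and is what you actually need: reduce to merging exactly two blocks $B_1, B_2$ of $\pi$ (with $\min(B_1) < \min(B_2)$), and then $D'$ is obtained from $D$ by relocating the vertical run visible from $B_2$ onto the top of the vertical run visible from $B_1$, leaving all other vertical runs untouched. This preserves the nested blocks' runs and gives additivity immediately from Proposition~\ref{recover-dyck-path}.

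Your ``alternative, more cleanly'' route via the defining recursion also has an unresolved gap. Summing the $\pi$-recursion over blocks inside $[\min(B'), \max(B')]$ does not telescope to $\bigl\lceil (\max(B') - \min(B') + 1)\frac{a}{b} \bigr\rceil$ for free: one needs ceiling identities of the form $\bigl\lceil (M-m+1)\frac{a}{b}\bigr\rceil = \sum_C \bigl\lceil (\max(C) - \min(C)+1)\frac{a}{b}\bigr\rceil$ over the $\preceq_\pi$-maximal sub-blocks, and those are false for arbitrary noncrossing $\pi$ (only the inequality $\leq$ holds; cf.\ the counterexamples after Proposition~\ref{ranks-add} and in Example~\ref{valid-example}). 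The hypothesis $\pi, \pi' \in \NC(a,b)$ is essential and must be used somewhere, and the Dyck-path reformulation in Proposition~\ref{recover-dyck-path} is precisely how the paper injects that hypothesis. So you would still be pushed back to the Dyck-path argument, where the correct local move is the ``relocate one vertical run'' move described above.
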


\begin{proof}
It suffices to consider the case where $\pi'$ is obtained by merging two blocks of $\pi$, say 
$B_1$ and $B_2$. Suppose $\min(B_1) < \min(B_2)$.  If $D, D'$ are the 
$a,b$-Dyck paths corresponding to $\pi, \pi'$, then $D'$ is obtained from $D$ by moving the vertical 
run visible from $B_2$ on top of the vertical run visible from $B_1$.  The merged vertical run in $D'$ is visible
from $B_1 \uplus B_2$.
The result follows from Proposition~\ref{recover-dyck-path}.
\end{proof}

Proposition~\ref{ranks-add} fails for partitions $\pi, \pi \in \NC(b-1)$ which are not  both $a,b$-noncrossing.  
For example, let 
$(a,b) = (2,5)$, $\pi = \{ \{1, 2, 3\}, \{4\} \},$ and $\pi' = \{ \{1,2,3,4\} \}$.  We have
$\rank^{\pi}_{2,5}(\{1,2,3\}) = 2, \rank^{\pi}_{2,5}(\{4\}) = 1,$ and
$\rank^{\pi'}_{2,5}(\{1,2,3,4\}) = 2$.

\begin{proposition}
\label{dihedral-invariance}
Let $a < b$ be coprime, let $\pi \in \NC(a,b)$, and let $B$ be a block of $\pi$.  We have
\begin{align}
\rank^{\pi}_{a,b}(B) &= \rank^{\rot(\pi)}_{a,b}(\rot(B)), \\
\rank^{\pi}_{a,b}(B) &= \rank^{\rfn(\pi)}_{a,b}(\rfn(B)).
\end{align}
\end{proposition}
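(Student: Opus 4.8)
The plan is to exploit the fact (Proposition~\ref{recover-dyck-path}) that for $\pi \in \NC(a,b)$ with associated $a,b$-Dyck path $D$, the rank of a block $B$ equals the size of the vertical run of $D$ visible from $B$. So rank is, up to the visibility bijection, just a statistic on vertical runs of Dyck paths. To prove dihedral invariance of rank, it then suffices to understand how the Dyck path of $\rot(\pi)$ and $\rfn(\pi)$ relate to the Dyck path of $\pi$, and to track the visibility bijection through these transformations.

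For the rotation identity, I would invoke Proposition~\ref{closed-under-rotation}: if $D$ is the $a,b$-Dyck path for $\pi$, then $\pi(\rot'(D)) = \rot^{-1}(\pi)$, equivalently the Dyck path of $\rot(\pi)$ is $(\rot')^{-1}(D)$. One then traces through the three cases of the definition of $\rot'$ and checks that the multiset of vertical-run sizes is preserved and, more precisely, that the vertical run visible from $B$ in $D$ is carried to the vertical run visible from $\rot(B)$ in the Dyck path of $\rot(\pi)$. In cases (1) and (2) of the $\rot'$ definition the vertical runs are literally unchanged (only a horizontal step is moved), so this is immediate; in case (3) the vertical runs $N^{i_1}, \dots, N^{i_m}$ are merely permuted cyclically among the subpaths, again preserving sizes, and the bookkeeping in the proof of Proposition~\ref{closed-under-rotation} (which valley laser of $\rot'(D)$ corresponds to which valley laser of $D$) already tells us exactly how the visibility bijection is permuted. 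Matching this against how $\rot$ permutes the minima of blocks gives the first identity.

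For the reflection identity, since $\rfn$ is an involution it is cleanest to argue directly with the disk/topological picture rather than via Dyck paths. I would note that $\rfn$ reverses the cyclic order of the boundary points $1, \dots, b-1$, so it sends the noncrossing partition $\pi$ of $[b-1]$ to another noncrossing partition $\rfn(\pi)$, and it preserves the interval structure: $B' \preceq B$ in $\pi$ if and only if $\rfn(B') \preceq \rfn(B)$ in $\rfn(\pi)$, because reflection sends the cyclic interval spanned by $B$ to the cyclic interval spanned by $\rfn(B)$, and $\max(B) - \min(B) + 1$ is replaced by the same quantity $\max(\rfn(B)) - \min(\rfn(B)) + 1$ up to the reflection $i \mapsto b-i$. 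Consequently the defining recursion
\begin{equation*}
\sum_{B' \preceq B} \rank^{\pi}_{a,b}(B') = \left\lceil (\max(B) - \min(B) + 1)\tfrac{a}{b} \right\rceil
\end{equation*}
for $\pi$ is transported verbatim to the analogous recursion for $\rfn(\pi)$, and since the recursion determines rank uniquely (it is triangular with respect to $\preceq$), we get $\rank^{\pi}_{a,b}(B) = \rank^{\rfn(\pi)}_{a,b}(\rfn(B))$. The one point requiring care is that $\rfn(\pi) \in \NC(a,b)$ in the first place — but this follows once rotational closure and the reflection-of-the-Dyck-path picture are combined, or can be cited from Corollary~\ref{dihedral-closed}.

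The main obstacle is the rotation case: one must verify carefully that in case (3) of the $\rot'$ construction the visibility bijection is permuted in exactly the way that matches the action of $\rot$ on block minima — i.e. that cutting and reassembling the subpaths (green, black, blue, red) relabels the east-step labels $1, \dots, b-1$ consistently with the cyclic shift $i \mapsto i-1$. The reassembly of subpaths does not act on labels by a literal cyclic shift on the nose (the laser-endpoint subpath gets moved to the front), so the argument needs the laser-correspondence bookkeeping from the proof of Proposition~\ref{closed-under-rotation} to confirm that, block-by-block, minima and visible vertical runs track each other correctly. Everything else is routine once Propositions~\ref{recover-dyck-path} and~\ref{closed-under-rotation} are in hand.
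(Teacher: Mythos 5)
Your reflection argument is essentially the paper's: both observe that $\rfn$ preserves interval lengths and the $\preceq$-order, so the defining recursion of Definition~\ref{rank-definition} transports verbatim. However, the worry you raise at the end --- that one needs $\rfn(\pi) \in \NC(a,b)$ --- is misplaced, and the patch you suggest (citing Corollary~\ref{dihedral-closed}) would be circular, since the paper proves that corollary \emph{from} the reflection half of the present proposition. The point is that $\rank^{\pi}_{a,b}(\cdot)$ is defined for \emph{every} noncrossing partition of $[b-1]$, not just for $a,b$-noncrossing ones; no Dyck path and no membership in $\NC(a,b)$ is needed. The paper even flags this explicitly, remarking that reflection invariance ``still holds if $\pi$ fails to be $a,b$-noncrossing.''

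Your rotation plan takes a genuinely different route and is the weaker part of the proposal. You want to track the visibility bijection through the three cases of $\rot'$; the nontrivial case is case (3), where the path is cut into four subpaths and reassembled, and you would need to show that this reassembly carries the vertical run visible from each block $B$ to the run visible from $\rot(B)$. You acknowledge you have not carried out this bookkeeping, and it is real work: the reassembly moves a single vertical run ($N^{i_1}$) to a new slot rather than cyclically permuting all runs, so the correspondence of east-step labels has to be checked step by step, essentially re-deriving the laser bookkeeping already done inside the proof of Proposition~\ref{closed-under-rotation}. The paper avoids all of this. It notes that rotation preserves the recursion outright unless $B$ is the one block that changes its spanned interval under $\rot$, and for that single exceptional block it invokes the identity $\sum_{B' \in \pi} \rank^{\pi}_{a,b}(B') = a$ (a consequence of Proposition~\ref{recover-dyck-path}) together with $\rot(\pi) \in \NC(a,b)$ (Proposition~\ref{closed-under-rotation}): the ranks of all other blocks are preserved, and both sums equal $a$, so the exceptional block's rank is forced. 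If you want to complete your argument cleanly, replacing the case-(3) tracing with this sum-to-$a$ constraint is the shortcut.
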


\begin{proof}
Since the intervals $[\min(B), \max(B)]$ and $[\min(\rfn(B)), \max(\rfn(B))]$ have the same length, 
reflection invariance is clear (and still holds if $\pi$ fails to be $a,b$-noncrossing).

Recall that $\rot$ acts by adding $1$ to every index, modulo $b-1$.
Rotation invariance is therefore clear unless $B$ has the form 
$B = \{ i_1 < \dots < i_j < b-1\}$ with $i_1 > 1$.  In this case, the intervals $[1, i_1 - 1], [i_2 + 1, i_3 - 1], \dots, [i_j + 1, b-2]$
must each be unions of blocks of $\pi$.  Let $B_1, \dots, B_r$ be a complete list of these remaining blocks.  We certainly have
$\rank^{\pi}_{a,b}(B_i) = \rank^{\rot(\pi)}_{a,b}(\rot(B_i))$ for $1 \leq  i \leq r$.
On the other hand, Proposition~\ref{closed-under-rotation} 
and the assumption that $\pi \in \NC(a,b)$ guarantee that 
$\rot(\pi) \in \NC(a,b)$.  Proposition~\ref{recover-dyck-path} shows that
\begin{align*}
a &= \rank^{\pi}_{a,b}(B) + \rank^{\pi}_{a,b}(B_1) + \cdots + \rank^{\pi}_{a,b}(B_r) \\
&= \rank^{\rot(\pi)}_{a,b}(\rot(B)) + \rank^{\rot(\pi)}_{a,b}(\rot(B_1)) + \cdots + \rank^{\rot(\pi)}_{a,b}(\rot(B_r)).
\end{align*}
Since $\rank^{\pi}_{a,b}(B_i) = \rank^{\rot(\pi)}_{a,b}(\rot(B_i))$ for $1 \leq  i \leq r$, we get
$\rank^{\pi}_{a,b}(B) = \rank^{\rot(\pi)}_{a,b}(\rot(B))$.
\end{proof}

Proposition~\ref{dihedral-invariance} does not hold for partitions $\pi \in \NC(b-1)$ which 
are not $a,b$-noncrossing.  For example, take $(a,b) = (2, 5)$ and $\pi = \{ \{1\}, \{2,3,4\} \}$, so that
$\rot(\pi) = \{ \{1,3,4\} \{2\} \}$.  We have that 
$\rank^{\pi}_{2,5}(\{2,3,4\}) = 2$ but $\rank^{\rot(\pi)}_{2,5}(\{1,3,4\}) = 1$.

\subsection{Characterization from rank function}
The $a,b$-rank function $\rank^{\pi}_{a,b}(\cdot)$ can be used to decide whether a given noncrossing partition 
$\pi \in \NC(b-1)$ is $a,b$-noncrossing.
This is our first application of $a,b$-rank to rational Catalan theory.
To begin, we introduce the notion of a valid $a,b$-ranking.

\begin{defn}
\label{def:valid-ranking}
Let $\pi \in \NC(b-1)$ be an arbitrary noncrossing partition of $[b-1]$.
We say that $\pi$ has a {\em valid $a,b$-ranking} if 
$\rank^{\pi}_{a,b}(B) > 0$ for every block $B \in \pi$ and
\begin{equation*}
\sum_{B \in \pi} \rank(B) = a.
\end{equation*}
\end{defn}

\begin{example}
\label{valid-example}
Let $(a, b) = (3, 5)$ and consider the three noncrossing partitions 
\begin{center}
$\pi_1 = \{ \{1, 2, 4\}, \{3\} \},
\pi_2 = \{ \{1, 4\}, \{2\}, \{3\} \},$ and $\pi_3 = \{ \{1, 2\}, \{3\}, \{4\}\}$.  
\end{center}
We have that $\pi_1$ has a valid
$3,5$-ranking, but $\pi_2$ and $\pi_3$ do not.  Indeed,
we have $\rank^{\pi_2}_{3,5}(\{1,4\}) = 0$ and 
$\rank^{\pi_3}_{3,5}(\{1,2\}) + \rank^{\pi_3}_{3,5}(\{3\}) + \rank^{\pi_3}_{3,5}(\{4\}) = 2 + 1 + 1 > 3$.
\end{example}

We record the following lemma for future use.

\begin{lemma}
\label{inequality}
Let $a < b$ be coprime and
let $\pi \in NC(b-1)$ be an arbitrary noncrossing partition of $[b-1]$.  We have 
\begin{equation}
\sum_{B \in \pi} \rank(B) \geq a.
\end{equation}
\end{lemma}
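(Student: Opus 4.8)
The plan is to reduce the inequality to a statement about a single block with no nested blocks, where the recursion in Definition~\ref{rank-definition} bottoms out and the rank is a ceiling of a positive quantity. Fix $\pi \in \NC(b-1)$. Summing the defining relation over all blocks $B$ of $\pi$ is awkward because each $\rank(B')$ is counted once for every $B \succeq B'$; instead I would use the \emph{outer} blocks — those $B$ that are maximal with respect to $\preceq$. If $B_1, \dots, B_t$ are the $\preceq$-maximal blocks of $\pi$, then every block of $\pi$ lies below exactly one $B_s$, so $\sum_{B \in \pi} \rank(B) = \sum_{s=1}^{t} \sum_{B' \preceq B_s} \rank(B') = \sum_{s=1}^{t} \left\lceil (\max(B_s) - \min(B_s) + 1)\tfrac{a}{b} \right\rceil$ by the recursion. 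It therefore suffices to show $\sum_{s=1}^{t} \left\lceil (\max(B_s) - \min(B_s) + 1)\tfrac{a}{b} \right\rceil \geq a$.

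Next I would observe that the intervals $I_s := [\min(B_s), \max(B_s)]$ for $1 \le s \le t$ are pairwise disjoint (if two of them overlapped, noncrossingness would force one $\preceq$ the other, contradicting maximality) and are all contained in $[1, b-1]$. Write $\ell_s := |I_s| = \max(B_s) - \min(B_s) + 1$, so $\sum_s \ell_s \leq b-1$. Dropping the ceilings would only lose, so I use $\lceil \ell_s \tfrac{a}{b}\rceil \geq \ell_s \tfrac{a}{b} + \tfrac{(\text{something nonnegative})}{b}$; more precisely, since $\gcd(a,b)=1$ and $1 \le \ell_s \le b-1$, the quantity $\ell_s a$ is never divisible by $b$, so $\lceil \ell_s \tfrac{a}{b}\rceil \geq \ell_s\tfrac{a}{b} + \tfrac1b$. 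Hence
\begin{equation*}
\sum_{s=1}^{t} \left\lceil \ell_s \tfrac{a}{b} \right\rceil \;\geq\; \frac{a}{b}\sum_{s=1}^{t} \ell_s + \frac{t}{b}.
\end{equation*}
This alone is not quite enough when $\sum_s \ell_s$ is much smaller than $b-1$, so I need to also incorporate the blocks that are \emph{not} covered by any $I_s$: the complement $[1,b-1] \setminus \bigsqcup_s I_s$ is a disjoint union of intervals, each of which is itself a union of $\preceq$-maximal blocks of the restricted partition — wait, that is not right either. Let me instead argue directly.

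The clean way: induct on $b-1 - \sum_s \ell_s$, the number of indices not lying in any outer interval. If this is $0$, the outer intervals tile $[1,b-1]$, so $\sum_s \ell_s = b-1$ and $\sum_s \lceil \ell_s\tfrac ab\rceil \ge \tfrac ab(b-1) + \tfrac tb \ge a - \tfrac ab + \tfrac 1b > a-1$, hence $\ge a$ since the sum is an integer. Otherwise there is an index $i$, say $i \notin \bigsqcup_s I_s$, and I form $\pi''$ on $[b-2]$ by deleting $i$ and relabeling; the outer intervals persist (possibly shrinking by one if $i$ were interior to one, but by choice it is not), and one checks $\rank^{\pi''}_{a,b'}$ with $b' = b-1$... this changes $b$, which breaks coprimality. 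So induction on the ground set is the wrong move.

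Let me therefore fall back on the covering argument done carefully. The indices of $[1,b-1]$ not in any $I_s$ form, together with the $I_s$, a partition of $[1,b-1]$ into $t$ ``block'' intervals and some ``gap'' intervals $G_1, \dots, G_u$; but actually each gap interval $G_p$, by noncrossingness, is again a union of outer intervals of $\pi$ restricted to $G_p$ — no: a gap interval contains no outer block, yet it contains indices, each belonging to some block, and that block's interval must lie inside $G_p$ (it cannot straddle a $\preceq$-maximal block), so it \emph{is} inside some outer interval — contradiction. Hence there are no gaps, $t$ outer intervals tile $[1,b-1]$, and the $0$-gap case above \emph{is} the general case. The main obstacle, then, is just this combinatorial observation — that the $\preceq$-maximal blocks' intervals exactly tile $[1,b-1]$ — after which the ceiling/coprimality estimate $\lceil \ell a/b\rceil \ge \ell a/b + 1/b$ for $1 \le \ell \le b-1$ finishes it, since $\sum_s \lceil \ell_s a/b\rceil$ is an integer strictly exceeding $a-1$.
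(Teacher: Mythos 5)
Your proof takes the same route as the paper's: decompose $\sum_{B\in\pi}\rank(B)$ over $\preceq$-maximal blocks, note that their intervals tile $[1,b-1]$, use coprimality to make the ceiling strictly exceed $\ell_s\frac{a}{b}$, and conclude by integrality since $(b-1)\frac{a}{b}=a-\frac{a}{b}>a-1$. The abandoned induction attempt and the sharpened bound $\lceil \ell a/b\rceil \ge \ell a/b + 1/b$ are unnecessary detours — the simple strict inequality already closes the argument once the tiling observation is in place.
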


\begin{proof}
Recall the partial order $\preceq$ on the blocks of $\pi$.  If $B'$ is maximal with respect to $\preceq$, we have
\begin{equation*}
\sum_{B \preceq B'} \rank(B) = \left \lceil (\max(B') - \min(B') + 1)\frac{a}{b} \right \rceil > (\max(B') - \min(B') + 1)\frac{a}{b}.
\end{equation*}
Summing over all $\preceq$-maximal blocks $B'$ gives
\begin{equation}
\sum_{B \in \pi} \rank(B) > (b-1)\frac{a}{b},
\end{equation}
which is equivalent to the desired inequality since $\sum_{B \in \pi} \rank(B) \in \ZZ$.
\end{proof}

By Proposition~\ref{recover-dyck-path}, every $a,b$-noncrossing partition has a valid $a,b$-ranking. 
It would be nice if the converse held, but it does not; if $(a,b) = (2,5)$, the partition
$\pi = \{ \{1,2,4\}, \{3\}\}$ has a valid $2,5$-ranking but $\pi \notin \NC(2,5)$.
 Our second characterization of  $a,b$-noncrossing partitions 
states that $\pi \in \NC(a,b)$ if and only if every element 
in the orbit of $\pi$ under rotation has a 
valid $a,b$-ranking.  To prove this statement, we start by examining how  validity behaves under coarsening.  

In general, having a valid $a,b$-ranking is not closed under coarsening of noncrossing partitions.  For example, let 
$(a, b) = (5, 11)$ and consider the two partitions $\pi, \pi' \in \NC(10)$ given by
\begin{align*}
\pi &= \{ \{1, 6, 7, 8, 9, 10\}, \{2\}, \{3\}, \{4\}, \{5\} \},  \\
\pi' &= \{ \{1, 6, 7, 8, 9, 10\}, \{2, 5\}, \{3\}, \{4\} \}.  
\end{align*}
Then $\pi$ has a valid $5,11$-ranking but $\pi'$ does not.  
On the other hand, certain types of coarsening do preserve  validity.

\begin{lemma}
\label{valid-rankings-coarsen}
Let $a < b$ be coprime and let $\pi \in \NC(b-1)$ be a noncrossing partition such that
$\pi$ has a valid $a,b$-ranking.  Suppose that $\pi' \in \NC(b-1)$ is another noncrossing partition
 obtained from
$\pi$ by one of the following two operations:
\begin{enumerate}
\item  merging two blocks $B_0, B_1$ of $\pi$ with $\min(B_1) = \max(B_0) + 1$, or
\item merging two blocks $B_0, B_1$ of $\pi$ with $B_0 \prec B_1$.
\end{enumerate}

The noncrossing partition $\pi'$ has a valid $a,b$-ranking.
\end{lemma}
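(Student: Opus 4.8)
The plan is to handle the two coarsening operations separately, in each case reducing to an additivity statement for rank and a careful bookkeeping of the partial order $\preceq$. In both cases, the positivity condition ($\rank(B)>0$ for all blocks $B$) and the total-sum condition ($\sum_B \rank(B) = a$) must be verified for $\pi'$, and the total-sum condition will be the more delicate of the two since $\rank$ on a non-$a,b$-noncrossing partition is not obviously additive under arbitrary coarsening.

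For operation (2), merging $B_0 \prec B_1$: here the interval $[\min(B_1),\max(B_1)]$ already contains $[\min(B_0),\max(B_0)]$, so the defining recursion in Definition~\ref{rank-definition} behaves well. First I would observe that the $\preceq$-order on the blocks of $\pi'$ is obtained from that on $\pi$ simply by deleting $B_0$ and relabeling the merged block as $B_1$: no other covering relations change, because $B_0$ was nested inside $B_1$ and merging does not alter any interval $[\min,\max]$ except that $B_0$'s interval disappears into $B_1$'s. Consequently the recursion $\sum_{B' \preceq B}\rank(B') = \lceil (\max(B)-\min(B)+1)\tfrac{a}{b}\rceil$ has the \emph{same} right-hand side for every block $B \neq B_0$, and the left-hand sides differ only in that the term $\rank^{\pi}(B_0)$ is absorbed. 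A downward induction on $\preceq$ then shows $\rank^{\pi'}(B) = \rank^{\pi}(B)$ for every block $B$ strictly below the merged block, $\rank^{\pi'}(B_1) = \rank^{\pi}(B_1) + \rank^{\pi}(B_0)$ for the merged block, and $\rank^{\pi'}(B) = \rank^{\pi}(B)$ for blocks incomparable to or above $B_1$. Positivity is then immediate (a sum of positives), and the total sum is unchanged, hence still equal to $a$.

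For operation (1), merging adjacent blocks $B_0,B_1$ with $\min(B_1)=\max(B_0)+1$: this is the genuinely new case, because $B_0$ and $B_1$ need not be $\preceq$-comparable, and the merged block $B_0 \uplus B_1$ has interval $[\min(B_0),\max(B_1)]$, a strictly larger interval than either piece. The key identity I would establish is the ceiling additivity
\begin{equation*}
\left\lceil (\max(B_1)-\min(B_0)+1)\tfrac{a}{b}\right\rceil
= \left\lceil (\max(B_0)-\min(B_0)+1)\tfrac{a}{b}\right\rceil
+ \left\lceil (\max(B_1)-\min(B_1)+1)\tfrac{a}{b}\right\rceil,
\end{equation*}
which I expect to be the main obstacle: it is \emph{not} true for arbitrary integers, so it must use the hypothesis that $\pi$ has a valid $a,b$-ranking, presumably via Lemma~\ref{inequality} applied to a suitable sub-partition (the blocks of $\pi$ lying in $[\min(B_0),\max(B_1)]$) together with the total-sum constraint $\sum_{B\in\pi}\rank(B)=a$, which forces the inequality in Lemma~\ref{inequality} to be sharp on certain sub-intervals. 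Concretely, I would argue that the interval $[\min(B_0),\max(B_1)]$ decomposes as a disjoint union of maximal $\preceq$-blocks of $\pi$ — namely $B_0$, $B_1$, and the blocks nested inside them — and that validity of $\pi$ forces the ceiling of the scaled length of this combined interval to equal the sum of ranks of those blocks, which in turn yields the displayed identity. Once the identity is in hand, the recursion for $\pi'$ differs from that for $\pi$ only by merging the rows for $B_0$ and $B_1$, so $\rank^{\pi'}(B_0\uplus B_1) = \rank^{\pi}(B_0)+\rank^{\pi}(B_1) > 0$ and all other ranks and the total sum are preserved, giving the valid $a,b$-ranking for $\pi'$.

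A cleaner alternative I would try first, to avoid the ceiling arithmetic, is to invoke Proposition~\ref{recover-dyck-path} and Proposition~\ref{ranks-add} when $\pi$ happens to be $a,b$-noncrossing, and to reduce the general valid-ranking case to that one — but since a valid ranking does not imply $a,b$-noncrossing, this reduction is not automatic, and I expect the honest route to go through the ceiling identity above. Either way, the structure is: (i) track how $\preceq$ changes under the merge; (ii) prove the relevant additivity of the right-hand side of the rank recursion (trivial for (2), the crux for (1)); (iii) conclude $\rank^{\pi'}$ is obtained from $\rank^{\pi}$ by adding the two merged ranks, leaving positivity and the total sum intact.
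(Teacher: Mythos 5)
Your treatment of operation (2) is correct and coincides with the paper's: the recursion absorbs $\rank^{\pi}(B_0)$ into $\rank^{\pi'}(B_0\cup B_1)$ with no other changes, so nothing further needs checking.

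For operation (1), however, the ``key identity'' you propose to prove is actually false, even under the hypothesis that $\pi$ has a valid $a,b$-ranking. Take $(a,b)=(7,10)$ and
\begin{equation*}
\pi = \bigl\{ \{1,7,8,9\},\ \{2,3\},\ \{4,5,6\} \bigr\},
\end{equation*}
which has ranks $2,2,3$ (in the order listed) summing to $7=a$, all positive, so $\pi$ has a valid $7,10$-ranking. Merging the adjacent blocks $B_0=\{2,3\}$ and $B_1=\{4,5,6\}$ gives $\pi'=\{\{1,7,8,9\},\{2,3,4,5,6\}\}$, and
\begin{equation*}
\rank^{\pi'}_{7,10}\bigl(\{2,3,4,5,6\}\bigr)=\left\lceil 5\cdot\tfrac{7}{10}\right\rceil=4
\ \ne\ 5 = 2+3 =\rank^{\pi}_{7,10}(B_0)+\rank^{\pi}_{7,10}(B_1),
\end{equation*}
while $\rank^{\pi'}_{7,10}(\{1,7,8,9\})$ goes up from $2$ to $3$. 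So the merged rank genuinely drops by one, and no amount of appealing to Lemma~\ref{inequality} on sub-intervals will rescue the identity: validity constrains the global sum, not sub-interval sums. Your alternative route via Propositions~\ref{recover-dyck-path} and \ref{ranks-add} is also unavailable, as you note, since valid rankings need not be $a,b$-noncrossing.

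The paper's actual argument is more robust precisely because it does not attempt this identity. It uses only the two-sided bound $\lceil x\rceil+\lceil y\rceil-1\le\lceil x+y\rceil\le\lceil x\rceil+\lceil y\rceil$, giving $\rank^{\pi}(B_0)+\rank^{\pi}(B_1)-1\le\rank^{\pi'}(B_0\cup B_1)\le\rank^{\pi}(B_0)+\rank^{\pi}(B_1)$, which already yields positivity of the merged rank. It then splits on whether $B_0\cup B_1$ is $\preceq$-maximal in $\pi'$. If it is maximal, all other ranks are unchanged, so the total is $\le a$; combined with Lemma~\ref{inequality} the total must equal $a$ (and incidentally the drop cannot occur here). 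If it is not maximal, let $B_2$ be the cover of $B_0\cup B_1$; comparing the recursion for $B_2$ in $\pi$ and $\pi'$ shows $\rank^{\pi'}(B_2)\ge\rank^{\pi}(B_2)>0$ and that the partial sum up to $B_2$ is unchanged, hence the total sum and all ranks outside $\{B_0,B_1,B_2\}$ are preserved. In other words, the drop in the merged block, if it occurs, is compensated by an increase in $B_2$'s rank, and validity is maintained. You should replace the attempt to prove exact ceiling additivity with this case analysis.
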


Observe that if $\pi, \pi' \in \NC(b-1)$ are such that $\pi$ refines $\pi'$ and $\pi'$ is a union of intervals,
then $\pi'$ can be obtained from $\pi$ by a sequence of coarsenings as described in 
Lemma~\ref{valid-rankings-coarsen}.

\begin{proof}
First assume that $\pi'$ is obtained from $\pi$ by replacing $B_0$ and $B_1$ with $B_0 \cup B_1$ as in Condition 1.
For $i = 0, 1$, let $r_i$ denote the sum of the ranks of the blocks $B$ of $\pi$ satisfying $B \prec B_i$.  By the definition of
rank, we have
\begin{equation}
\label{eq:i_zero_or_one}
\rank^{\pi}_{a,b}(B_i) + r_i = \left \lceil (\max(B_i) - \min(B_i) + 1)\frac{a}{b} \right \rceil
\end{equation}
for $i = 0, 1$.  Adding these two equations together and recalling that
$\lceil x \rceil + \lceil y \rceil - 1 \leq \lceil x + y \rceil \leq \lceil x \rceil + \lceil y \rceil$ for any $x, y \in \mathbb{R}$, we get
\begin{equation}
\label{valid-inequalities}
\rank^{\pi}_{a,b}(B_0) + \rank^{\pi}_{a,b}(B_1) - 1 \leq \rank^{\pi'}_{a,b}(B_0 \cup B_1)  \leq \rank^{\pi}_{a,b}(B_0) + \rank^{\pi}_{a,b}(B_1).
\end{equation}
Since $\pi$ has a valid $a,b$-ranking, we have  $\rank^{\pi}_{a,b}(B_i) > 0$ for $i = 0, 1$, so that
$\rank^{\pi'}_{a,b}(B_0 \cup B_1) > 0$.
Our analysis breaks up into two cases depending on whether $B_0 \cup B_1$ is a $\preceq$-maximal block of $\pi'$.

If $B_0 \cup B_1$ is a $\preceq$-maximal block of $\pi'$,
for every block $B$ of $\pi'$ with $B \neq B_0, B_1$ we have 
$\rank^{\pi'}_{a,b}(B) = \rank^{\pi}_{a,b}(B)$, so that 
the chain of inequalities in (\ref{valid-inequalities}) implies 
$\sum_{B \in \pi'} \rank^{\pi'}_{a,b}(B) \leq a$, so $\sum_{B \in \pi'} \rank^{\pi'}_{a,b}(B) = a$ by Lemma~\ref{inequality}.
Since every block of $\pi'$ has a positive rank, we conclude that $\pi'$ has a valid $a,b$-ranking.

If $B_0 \cup B_1$ is not a $\preceq$-maximal block of $\pi'$,
there exists a unique block $B_2 \in \pi'$ such that $B_2$ covers $B_0 \cup B_1$ in $\preceq$.  
The recursion for $\rank^{\pi'}_{a,b}(\cdot)$ gives
\begin{equation}
\label{b2-rank}
\rank^{\pi'}_{a,b}(B_0 \cup B_1) + r_0 + r_1 + r_2 + \rank^{\pi'}_{a,b}(B_2) = \left \lceil (\max(B_2) - \min(B_2) + 1)\frac{a}{b} \right \rceil,
\end{equation}
where $r_2$ is the sum of the ranks of all of the blocks $B$ of $\pi'$ satisfying $B \prec B_2$ but $B \not\preceq B_0, B_1$.
Combining (\ref{b2-rank}) with the inequalities in (\ref{valid-inequalities}), we see that
$\rank^{\pi'}_{a,b}(B_2) \geq \rank^{\pi}_{a,b}(B_2) > 0$. 
Since 
\begin{equation}
\sum_{\substack{B \in \pi \\ B \preceq B_2}} \rank^{\pi}_{a,b}(B) =  \left \lceil (\max(B_2) - \min(B_2) + 1)\frac{a}{b} \right \rceil =
\sum_{\substack{B' \in \pi' \\ B' \preceq B_2}} \rank^{\pi'}_{a,b}(B'),
\end{equation}
we have
$\rank^{\pi'}_{a,b}(B) = \rank^{\pi}_{a,b}(B)$ for all blocks $B \neq B_0, B_1, B_2$.
In particular, the rank of every block of $\pi'$ is positive and
$\sum_{B \in \pi'} \rank^{\pi'}_{a,b}(B) = a$.  
We conclude that $\pi'$ has a valid $a,b$-ranking.

Now assume 
that $\pi'$ is obtained from $\pi$ by replacing $B_0$ and $B_1$ with $B_0 \cup B_1$ as in Condition 2.
It follows that $B_1$ covers $B_0$ in the $\preceq$-order. By the recursion for rank,
\begin{equation}
\rank^{\pi'}_{a,b}(B_0 \cup B_1) = \rank^{\pi}_{a,b}(B_0) + \rank^{\pi}_{a,b}(B_1)
\end{equation}
and the ranks of all other blocks of $\pi'$ equal the corresponding  ranks in $\pi'$, so that $\pi'$ has a valid $a,b$-ranking.
\end{proof}

We are ready to prove our $a,b$-rank  characterization of $a,b$-noncrossing partitions.

\begin{proposition}
\label{rank-sequence-characterization}
Let $a < b$ be  coprime and let $\pi$ be a noncrossing partition of $[b-1]$.  We have that $\pi$ is an $a,b$-noncrossing
partition if and only if every partition in the orbit of $\pi$ under rotation has a valid 
$a,b$-ranking.
\end{proposition}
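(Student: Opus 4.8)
For the forward implication, suppose $\pi \in \NC(a,b)$. By Proposition~\ref{closed-under-rotation} every rotation $\rot^k(\pi)$ again lies in $\NC(a,b)$, so it is enough to check that each $\sigma \in \NC(a,b)$ has a valid $a,b$-ranking. If $D$ is the Dyck path with $\pi(D) = \sigma$, Proposition~\ref{recover-dyck-path} identifies $\rank^{\sigma}_{a,b}(B)$ with the size of the vertical run of $D$ visible from $B$; these run sizes are positive and sum to the number $a$ of north steps of $D$. Hence $\sigma$ — and therefore every rotation of $\pi$ — has a valid $a,b$-ranking.

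Conversely, assume every partition in the rotation orbit of $\pi$ has a valid $a,b$-ranking. Write $R(\pi) = (r_1, \dots, r_{b-1})$ and set $D_\pi := N^{r_1} E N^{r_2} E \cdots N^{r_{b-1}} E E$, the lattice path whose successive vertical runs are the entries of the rank sequence, as in Proposition~\ref{recover-dyck-path}(2). I first claim $D_\pi$ is an $a,b$-Dyck path. It runs from $(0,0)$ to $(b,a)$ because $r_1 + \cdots + r_{b-1} = a$ by validity of the ranking of $\pi$, and it stays above $y = \frac{a}{b}x$ once one knows $r_1 + \cdots + r_j > \frac{a}{b}j$ for $1 \le j \le b-2$. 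The point is that $r_1 + \cdots + r_j$ equals the sum of $\rank(B)$ over the blocks $B$ of $\pi$ with $\min(B) \le j$; bounding this sum below by running the rank recursion over the appropriate $\preceq$-maximal blocks of $\pi$ and using that all ranks of $\pi$ are positive yields, exactly as in the proof of Lemma~\ref{inequality}, the inequality $r_1 + \cdots + r_j \ge \lceil ja/b\rceil > ja/b$. So $D_\pi$ is an $a,b$-Dyck path.

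It follows that $\sigma := \pi(D_\pi)$ lies in $\NC(a,b)$, and by Proposition~\ref{recover-dyck-path} its rank sequence is $R(\sigma) = R(\pi)$; in fact $\sigma$ is the \emph{only} partition in $\NC(a,b)$ with rank sequence $R(\pi)$, since any such partition has Dyck path forced to equal $D_\pi$. In particular $\pi$ and $\sigma$ have the same block minima, and the proof is complete once we establish $\pi = \sigma$.

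The equality $\pi = \sigma$ is the crux, and it is here that the hypothesis on \emph{all} rotations is genuinely needed: a valid ranking of $\pi$ alone does not force $\pi \in \NC(a,b)$, since for $(a,b) = (2,5)$ the partition $\{\{1,2,4\},\{3\}\}$ has a valid ranking and $D_\pi$ is a bona fide Dyck path, yet $\pi(D_\pi) = \{\{1,2\},\{3,4\}\}$. I expect this step to be the main obstacle. The plan is to argue by contradiction: if $\pi \ne \sigma$, then some non-minimal element lies in a block of $\pi$ that is wider or narrower than the lasers of $D_\pi$ permit, and after rotating we may assume this defect straddles the cut between $b-1$ and $1$; the defect then destroys the rank-invariance that genuine $a,b$-noncrossing partitions enjoy (Proposition~\ref{dihedral-invariance}) and, when fed into the rank recursion, forces $\sum_{B} \rank(B) > a$ for the corresponding rotation $\rot^k(\pi)$, contradicting its valid ranking. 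It should be cleanest to organize this through the Kreweras complement: $\pi \notin \NC(a,b)$ means $\krew(\pi)$ violates one of the two bullets of Proposition~\ref{kreweras-characterization} for some block, and since $\krew^2 = \rot$, rotating $\pi$ rotates $\krew(\pi)$, so the offending block and elements can be slid into a position where the violation is converted directly into an invalid ranking of a rotation of $\pi$.
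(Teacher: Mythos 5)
The forward implication is fine and matches the paper: it is exactly Proposition~\ref{closed-under-rotation} plus Proposition~\ref{recover-dyck-path}.

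The converse direction, however, is not a proof. You build a candidate path $D_\pi$ from the rank sequence $R(\pi)$, argue (modulo a partial-sum inequality) that $D_\pi$ is an $a,b$-Dyck path, set $\sigma := \pi(D_\pi) \in \NC(a,b)$, observe that $R(\sigma) = R(\pi)$, and then reduce to showing $\pi = \sigma$. But showing $\pi = \sigma$ is \emph{equivalent} to showing $\pi \in \NC(a,b)$, which was the whole point, and you explicitly flag this as ``the crux'' and ``the main obstacle'' and then offer only a two-sentence plan rather than an argument. That plan (``the defect destroys the rank-invariance\dots forces $\sum_B \rank(B) > a$ for some rotation'') is not executed and would not obviously go through as stated. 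The second plan you append -- detect the failure via the Kreweras characterization (Proposition~\ref{kreweras-characterization}), rotate so the offending indices become an initial segment, and then convert the arithmetic violation into an invalid ranking -- is essentially what the paper actually does, but there the crucial engine is Lemma~\ref{valid-rankings-coarsen}: one passes from $\rot^{-i}(\pi)$ to a coarse partition of two or three intervals, invokes the lemma to transfer validity, and then exhibits the invalid ranking of the coarse partition by a direct ceiling computation. None of that machinery appears in your write-up, and without it the plan is just a gesture at the right neighborhood.

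Two further technical remarks. First, the claim $r_1 + \cdots + r_j \ge \lceil ja/b\rceil$ for all $j$ is asserted to follow ``exactly as in the proof of Lemma~\ref{inequality},'' but that lemma only proves the case $j = b-1$; for $j < b-1$ the sum $\sum_{B:\min(B)\le j}\rank(B)$ includes straddling blocks $B$ with $\max(B) > j$ whose ranks are defined through intervals extending past $j$, so the lemma's argument (summing the recursion over $\preceq$-maximal blocks) does not directly apply and a genuine additional argument is required. Second, even if $D_\pi$ is a Dyck path, nothing in your argument uses more than the validity of $\pi$'s own ranking to produce $\sigma$, whereas the hypothesis you are given concerns the \emph{whole rotation orbit}; that hypothesis has to be consumed somewhere, and your write-up never consumes it. The paper avoids the construction of $D_\pi$ and $\sigma$ entirely, working by contraposition from the Kreweras characterization, which is where the orbit hypothesis is actually used: it lets one rotate the offending laser pair to an initial segment and detect the failure as an invalid ranking of a suitable coarsening.
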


\begin{proof}
Suppose $\pi \in \NC(a,b)$.  By Proposition~\ref{closed-under-rotation}, 
we know that the rotation orbit of $\pi$ is contained in
$\NC(a,b)$, so that every partition in this orbit has a valid $a,b$-ranking.

For the converse, suppose that $\pi \in \NC(b-1) - \NC(a,b)$.  We argue that some partition in the rotation orbit of $\pi$ does not 
have a valid $(a,b)$-ranking.  

Consider the Kreweras complement $\krew(\pi)$.  If there is a block $B \in \krew(\pi)$ and an 
index $i \in B - \{\max(B)\}$ such that $(i, \max(B)) \notin A(a,b)$, then $\pi$ refines the two-block set partition
$\pi' := \{ [i+1, \max(B)], [b-1] - [i+1, \max(B)] \}$ and  $\rot^{-i}(\pi)$  refines
$\rot^{-i}(\pi') = \{ [1, \max(B)-i], [\max(B)-i+1, b-1] \}$.  Since $\rot^{-i}(\pi)$ consists of two intervals, we have that 
$\rot^{-i}(\pi')$ can be obtained from $\rot^{-i}(\pi)$ by a sequence of coarsenings as in Lemma~\ref{valid-rankings-coarsen}.
The condition $(i, \max(B)) \notin A(a,b)$ means that
\begin{equation*}
\left \lceil (\max(B) - i)\frac{a}{b} \right \rceil + 
\left \lceil (b - \max(B) + i - 1)\frac{a}{b} \right \rceil > a,
\end{equation*}
so that $\rot^{-i}(\pi')$ does not have a valid $a,b$-ranking.  By Lemma~\ref{valid-rankings-coarsen}, we conclude that
$\rot^{-i}(\pi)$ does not have a valid $a,b$-ranking. 

By the last paragraph, we may assume that for every block $B \in \krew(\pi)$ and every index $i \in B - \{\max(B)\}$, we have
$(i, \max(B)) \in A(a,b)$.  Since $\pi \in \NC(b-1) - \NC(a,b)$, by Proposition~\ref{kreweras-characterization} there exists a block
$B \in \krew(\pi)$ and indices $i, j \in B - \{ \max(B) \}$ with $i < j$ such that
\begin{equation*}
\left \lceil (k - i)\frac{a}{b} \right \rceil - \left \lceil (k - j)\frac{a}{b} \right \rceil < (j-i)\frac{a}{b},
\end{equation*}
where $\max(B) = k$. 
The set partition $\pi$ refines the three-block noncrossing partition
$\pi' := \{ [i+1, j], [j+1, k], [b-1] - [i+1,k] \}$.  Therefore, the set partition $\rot^{-i}(\pi)$ refines 
$\rot^{-i}(\pi') = \{ [1, j-i], [j-i+1, k-i], [k-i+1, b-1] \}$.  Since $\rot^{-i}(\pi')$ consists of three intervals,
we have that $\rot^{-i}(\pi')$ can be obtained from $\rot^{-i}(\pi)$ by a sequence of coarsenings as in Lemma~\ref{valid-rankings-coarsen}.
We show that $\rot^{-i}(\pi')$ does not have a valid $a,b$-ranking; by Lemma~\ref{valid-rankings-coarsen}, this implies
that $\rot^{-i}(\pi)$ does not have a valid $a,b$-ranking and completes the proof.

Working towards a contradiction, suppose  
$\pi'' := \rot^{-i}(\pi')$ has a valid $a,b$-ranking.   Denote the blocks of $\pi''$ by
$B_1 = [1, j-i], B_2 = [j-i+1, k-i],$ and $B_3 = [k-i+1, b-1]$.  We have that
$\rank^{\pi''}_{a,b}(B_1) + \rank^{\pi''}_{a,b}(B_2) + \rank^{\pi''}_{a,b}(B_3) = a$.
On the other hand, by Lemma~\ref{valid-rankings-coarsen}, we have that $\pi''' := \{B_1 \cup B_2, B_3\}$ also has a valid $a,b$-ranking.
Moreover, we have that $\rank^{\pi'''}_{a,b}(B_3) = \rank^{\pi''}_{a,b}(B_3)$.
This implies that 
\begin{equation*}
a = \rank^{\pi'''}_{a,b}(B_1 \cup B_2) + \rank^{\pi'''}(B_3) = \left \lceil (k-i)\frac{a}{b} \right \rceil + \rank^{\pi''}(B_3).
\end{equation*}
Putting these facts together gives
\begin{align*}
a &= \rank^{\pi''}_{a,b}(B_1) + \rank^{\pi''}_{a,b}(B_2) + \rank^{\pi''}_{a,b}(B_3) \\
&= \left \lceil (j-i)\frac{a}{b} \right \rceil + \left \lceil (k-j)\frac{a}{b} \right \rceil + \rank^{\pi''}(B_3) \\
&= \left \lceil (j-i)\frac{a}{b} \right \rceil + \left \lceil (k-j)\frac{a}{b} \right \rceil  + a - \left \lceil (k-i)\frac{a}{b} \right \rceil \\
&> a + \left \lceil (j-i)\frac{a}{b} \right \rceil  - (j-i)\frac{a}{b} \\
&> a,
\end{align*}
which is a contradiction.  We conclude that $\pi''$ does not have a valid $a,b$-ranking.
\end{proof}

As an application of Proposition~\ref{rank-sequence-characterization}, we get that 
the set $\NC(a,b)$ carries an action of not just the rotation operator $\rot$, but the full dihedral group
$\langle \rot, \rfn \rangle$.  This gives another application of $a,b$-ranks.

\begin{corollary}
\label{dihedral-closed}
Let $a < b$ be coprime.  The set $\NC(a,b)$ of $a,b$-noncrossing partitions is closed under the action of the 
dihedral group $\langle \rot, \rfn \rangle$.
\end{corollary}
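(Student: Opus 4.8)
The plan is to reduce everything to closure under the single reflection $\rfn$, since $\NC(a,b)$ is already known to be closed under $\rot$ by Proposition~\ref{closed-under-rotation} and the dihedral group $\langle \rot, \rfn \rangle$ is generated by these two elements. So I would fix $\pi \in \NC(a,b)$ and aim to show $\rfn(\pi) \in \NC(a,b)$; the tool for this is the characterization in Proposition~\ref{rank-sequence-characterization}, so it suffices to verify that every partition in the rotation orbit of $\rfn(\pi)$ has a valid $a,b$-ranking.

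The first key step is the dihedral identity $\rfn \circ \rot = \rot^{-1} \circ \rfn$, which follows immediately from the explicit permutations defining $\rot$ and $\rfn$ on $[b-1]$. Iterating gives $\rot^k \circ \rfn = \rfn \circ \rot^{-k}$ for all $k$, so the rotation orbit of $\rfn(\pi)$ is exactly the image under $\rfn$ of the rotation orbit of $\pi$. Thus it is enough to show that for every $\sigma$ in the rotation orbit of $\pi$, the partition $\rfn(\sigma)$ has a valid $a,b$-ranking.

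Now each such $\sigma$ lies in $\NC(a,b)$ by Proposition~\ref{closed-under-rotation}, hence has a valid $a,b$-ranking by Proposition~\ref{rank-sequence-characterization}; that is, $\rank^{\sigma}_{a,b}(B) > 0$ for every block $B$ of $\sigma$ and $\sum_{B \in \sigma} \rank^{\sigma}_{a,b}(B) = a$. The reflection half of Proposition~\ref{dihedral-invariance} — which, as noted in its proof, holds for any noncrossing partition of $[b-1]$, not just the $a,b$-noncrossing ones — gives $\rank^{\rfn(\sigma)}_{a,b}(\rfn(B)) = \rank^{\sigma}_{a,b}(B)$ for every block $B$ of $\sigma$. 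Since $B \mapsto \rfn(B)$ is a bijection from the blocks of $\sigma$ onto the blocks of $\rfn(\sigma)$, the multiset of block ranks of $\rfn(\sigma)$ coincides with that of $\sigma$. Hence every block of $\rfn(\sigma)$ has positive rank and these ranks sum to $a$, so $\rfn(\sigma)$ has a valid $a,b$-ranking. Applying Proposition~\ref{rank-sequence-characterization} once more shows $\rfn(\pi) \in \NC(a,b)$, and combining this with rotational closure gives closure under all of $\langle \rot, \rfn \rangle$.

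I do not expect a genuine obstacle here: the only points requiring care are the bookkeeping that identifies the rotation orbit of $\rfn(\pi)$ with the $\rfn$-image of the rotation orbit of $\pi$, and the observation that the reflection invariance of $\rank^{\pi}_{a,b}(\cdot)$ does not require $a,b$-noncrossingness. Everything else is formal manipulation of the results already established.
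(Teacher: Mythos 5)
Your proposal is correct and follows essentially the same approach as the paper: reduce to closure under $\rfn$, use the dihedral relation to identify the rotation orbit of $\rfn(\pi)$ with the $\rfn$-image of the rotation orbit of $\pi$, invoke the reflection invariance of $\rank^{\pi}_{a,b}(\cdot)$ to transfer valid $a,b$-rankings, and conclude with Proposition~\ref{rank-sequence-characterization}. The paper's own proof is just a terser version of your argument, and you are right that the reflection half of Proposition~\ref{dihedral-invariance} holds for arbitrary noncrossing partitions (as the paper notes parenthetically in that proposition's proof).
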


\begin{proof}
It is enough to show that $\NC(a,b)$ is closed under $\rfn$.  For any noncrossing partition
$\pi \in \NC(b-1)$ and any block $B$ of $\pi$, we have that
\begin{equation*}
\rank^{\pi}_{a,b}(B) = \rank^{\rfn(\pi)}_{a,b}(\rfn(B)).
\end{equation*}
It follows that every element in the rotation orbit of $\pi$ has a valid $a,b$-ranking if and only if 
every element in the rotation orbit of $\rfn(\pi)$ has a valid $a,b$-ranking.
\end{proof}

\section{Modified rank sequences}
\label{Modified rank sequences}

In this section we will study rational noncrossing partitions which have  nontrivial rotational symmetry. 
Our key tool will the the theory of $a,b$-rank developed in Section~\ref{Characterizations}.
We fix the following

{\bf Notation.} {\em For the remainder of this section, let $a < b$ be  coprime positive integers.  Let $d | (b-1)$ be a divisor with 
$1 \leq d < b-1$.  Let $q := \frac{b-1}{d}$.  Let 
\begin{equation*}
\NC_d(a,b) := \{ \pi \in \NC(a,b) \,:\, \rot^d(\pi) = \pi \} 
\end{equation*}
denote the set of $a,b$-noncrossing
partitions which are fixed by $\rot^d$.}

The numerology of $\NC_d(a,b)$ will turn out to be somewhat simpler than that of $\NC(a,b)$ itself.  We begin by 
defining a modified version of the rank sequence which is well suited to studying partitions
with rotational symmetry.

Let $\pi \in \NC_d(a,b)$.  
The fact that $\pi$ is noncrossing implies that $\pi$ has at most one block $B_0$ which satisfies 
$\rot^d(B_0) = B_0$.  If  such a block $B_0$ exists, it is called {\em central}.
Moreover, the cyclic group $\ZZ_q = \langle \rot^d \rangle$ acts 
freely on the non-central blocks of $\pi$.
A non-central block $B$ of $\pi$ is called {\em wrapping}
if the interval $[\min(B), \max(B)]$ contains every block in the $\langle \rot^d \rangle$-orbit of $B$.
Any $\langle \rot^d \rangle$-orbit of blocks has at most one wrapping block.

\begin{defn}
Let $\pi \in \NC_d(a,b)$.  The {\em $d$-modified rank sequence} of $\pi$ is the length $d$
 sequence $S_d(\pi) = (s_1,  \dots, s_d)$ of nonnegative 
integers defined by 
\begin{equation}
s_i := 
\begin{cases}
\rank^{\pi}_{a,b}(B) & \text{if $i = \min(B)$ for some non-central, non-wrapping block $B \in \pi$,} \\
0 & \text{otherwise.}
\end{cases}
\end{equation}
\end{defn}

For example, suppose that $(a,b) = (4,9)$, $d = 4$, and 
$\pi = \{ \{1,8\}, \{2,3,6,7\}, \{4,5\} \} \in \NC_4(4,9)$.  The block $\{1,8\}$ of $\pi$ is wrapping
and the block $\{2,3,6,7\}$ of $\pi$ is central.  Since the $4,9$-rank of $\{4,5\}$ is $1$, the 
$4$-modified rank sequence of $\pi$ is $S_4(\pi) = (0,0,0,1)$.

The $d$-modified rank sequence $S_d(\pi)$ is like  the ordinary rank sequence $R(\pi)$, but we only consider the indices
in $[d]$ rather than in $[b-1]$ and we only keep track of the ranks of blocks which are neither central nor wrapping.
It is true, but not obvious at this point, that a set partition $\pi \in \NC_d(a,b)$ is determined by $S_d(\pi)$.

Our first lemma states that the assignment $\pi \mapsto S_d(\pi)$ commutes with the action of rotation.

\begin{lemma}
\label{modified-rank-sequences-rotate}
Let $\pi \in \NC_d(a,b)$ and let
$S_d(\pi) = (s_1, s_2, \dots, s_d)$ be the $d$-modified rank sequence of $\pi$.  We have that 
\begin{equation}
S_d(\rot(\pi)) = \rot(S_d(\pi)),
\end{equation}
where $\rot(s_1, s_2, \dots, s_d) = (s_d, s_1, \dots, s_{d-1})$ is the rotation operator on sequences.
\end{lemma}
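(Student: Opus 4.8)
The plan is to reduce the identity $S_d(\rot(\pi)) = \rot(S_d(\pi))$ to a congruence governing how each $\langle \rot^d \rangle$-orbit of blocks of $\pi$ moves under one application of $\rot$, using crucially that $a,b$-rank is rotation invariant.

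First I would record the preliminaries. Since $\rot$ commutes with $\rot^d$ and $\NC(a,b)$ is closed under rotation (Proposition~\ref{closed-under-rotation}), we have $\rot(\pi) \in \NC_d(a,b)$, so $S_d(\rot(\pi))$ is defined; moreover the bijection $B \mapsto \rot(B)$ from blocks of $\pi$ to blocks of $\rot(\pi)$ sends the central block of $\pi$ (if one exists) to the central block of $\rot(\pi)$ and sends each $\langle \rot^d \rangle$-orbit of non-central blocks of $\pi$ to a $\langle \rot^d \rangle$-orbit of non-central blocks of $\rot(\pi)$. By Proposition~\ref{dihedral-invariance} we have $\rank^{\pi}_{a,b}(B) = \rank^{\rot(\pi)}_{a,b}(\rot(B))$; iterating this $d$ times and using $\rot^d(\pi) = \pi$ shows that all blocks in a single $\langle \rot^d \rangle$-orbit have a common $a,b$-rank, which I will call $v(\mathcal O)$, and that $v(\rot(\mathcal O)) = v(\mathcal O)$.

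Next I would prove a per-orbit bookkeeping statement. For every non-central orbit $\mathcal O$ of $\pi$, the smallest index occurring in $\bigcup \mathcal O$ lies in $[d]$ (otherwise applying $\rot^{-d}$ to the block realizing that minimum produces a still smaller index in $\bigcup \mathcal O$), and among the blocks of $\mathcal O$ exactly one, call it $B(\mathcal O)$, is non-wrapping with $\min B(\mathcal O) \in [d]$; uniqueness uses the noncrossing structure together with the fact that $\mathcal O$ has at most one wrapping block. Consequently the only nonzero coordinate contributed by $\mathcal O$ to $S_d(\pi)$ is the one in position $\min B(\mathcal O)$, with value $v(\mathcal O)$, because the central block and all wrapping blocks are excluded by definition and the non-central blocks with minimum outside $[d]$ lie outside the index set $1, \dots, d$. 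Since distinct blocks have distinct minima, distinct orbits occupy distinct coordinates, so $S_d(\pi)$ is the sequence carrying $v(\mathcal O)$ in position $\min B(\mathcal O)$ for each non-central orbit $\mathcal O$ and zeros elsewhere; likewise $S_d(\rot(\pi))$ carries $v(\mathcal O)$ in position $\min B(\rot(\mathcal O))$. As $\rot$ on length-$d$ sequences takes the entry in position $i$ to position $i + 1$ modulo $d$ (residue in $[d]$), the lemma is now equivalent to the congruence
\begin{equation*}
\min B(\rot(\mathcal O)) \equiv \min B(\mathcal O) + 1 \pmod{d}
\end{equation*}
for every non-central orbit $\mathcal O$ of $\pi$.

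Proving this congruence is the step I expect to be the main obstacle, and it is where one must actually exploit the geometry of a $\langle \rot^d \rangle$-orbit of noncrossing blocks rather than formal properties. The essential point is that $B \mapsto \rot(B)$ is the naive shift $B \mapsto B + 1$ precisely when $b - 1 \notin B$. When $\min B(\mathcal O) < d$ one checks that $b - 1 \notin B(\mathcal O)$, so $\rot(B(\mathcal O)) = B(\mathcal O) + 1$ is again non-wrapping with minimum $\min B(\mathcal O) + 1 \in \{2, \dots, d\}$, and $B(\rot(\mathcal O)) = \rot(B(\mathcal O))$, giving the congruence. The delicate case is $\min B(\mathcal O) = d$: one first shows that in this situation $\mathcal O$ has a wrapping block $W$ whose largest element is $b - 1$, so that $\rot(W) \ni 1$; one then verifies that $\rot(W)$ is the block $B(\rot(\mathcal O))$, that is, that $\rot(W)$ is non-wrapping in $\rot(\mathcal O)$ and that no other block of $\rot(\mathcal O)$ with minimum in $[d]$ is non-wrapping --- whereupon $\min B(\rot(\mathcal O)) = 1 \equiv d + 1 \pmod{d}$. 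I would isolate these verifications in a short structural lemma asserting that, along a $\langle \rot^d \rangle$-orbit of $q$ pairwise-noncrossing blocks, applying $\rot$ once advances the ``wrapping block,'' and hence the distinguished block $B(\cdot)$, by exactly one step around the cyclic order of the orbit. Granting that lemma, the displayed congruence holds for every orbit, and summing over all non-central orbits yields $S_d(\rot(\pi)) = \rot(S_d(\pi))$.
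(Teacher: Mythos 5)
Your reorganization — summing over $\langle \rot^d \rangle$-orbits rather than over the indices $1, \dots, d$ as the paper does — is a legitimate alternative framing, and the reduction to the congruence $\min B(\rot(\mathcal O)) \equiv \min B(\mathcal O) + 1 \pmod d$ correctly captures what needs to be shown. However, the two structural claims you defer to a ``short structural lemma'' are where essentially all of the content of the paper's proof lives, and as sketched one of them is actually false.

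The first gap: the per-orbit uniqueness claim (each non-central orbit has exactly one non-wrapping block $B(\mathcal O)$ with $\min B(\mathcal O) \in [d]$) is stated with only the remark that ``uniqueness uses the noncrossing structure.'' This is true but genuinely requires an argument, since $\bigcup\mathcal O \cap [d]$ can have several elements which a priori could be distributed among several blocks; one has to use noncrossingness plus $\rot^d$-invariance to show that at most one of the blocks meeting $[d]$ is non-wrapping. Without this, the identification of $S_d(\pi)$ as the indicator vector of the orbit minima is not established.

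The second gap is a genuine error. In the case $\min B(\mathcal O) = d$, you assert ``$\mathcal O$ has a wrapping block $W$ whose largest element is $b-1$.'' This fails whenever the orbit has no wrapping block: if $\min B(\mathcal O) = d$ and there is no wrapping block, then $\min(\bigcup\mathcal O) = d$, forcing $\bigcup\mathcal O \cap [d] = \{d\}$ and hence $\mathcal O = \{\{d\}, \{2d\}, \dots, \{qd\}\}$, an orbit of singletons with no wrapping block at all. (For a concrete instance, take $(a,b) = (3,5)$, $d = 2$, $\pi = \{\{1,3\},\{2\},\{4\}\}$, and $\mathcal O = \{\{2\},\{4\}\}$.) The desired congruence is still true there — $\rot(\{b-1\}) = \{1\}$ becomes $B(\rot(\mathcal O))$ — but your argument via the wrapping block does not apply. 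The paper sidesteps all of this by a direct case analysis on the index $i$, splitting on $2 \leq i \leq d$ versus $i = 1$ and on whether $s_{i-1}$ is zero, explicitly treating the sub-cases where $i-1$ lies in a central block, a wrapping block, or a non-wrapping non-central block; that bookkeeping is exactly what your structural lemma would need to contain.
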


\begin{proof}
Let $S_d(\rot(\pi)) = (s_1', s_2' ,\dots, s_d')$ be the $d$-modified rank sequence of $\rot(\pi)$ and let 
$1 \leq i \leq q$.  We show that $s'_{i} = s_{i-1}$, where subscripts are interpreted modulo $d$.
We will make free use of Proposition~\ref{dihedral-invariance}, which implies that
$\rank^{\pi}_{a,b}(B) = \rank^{\rot(\pi)}_{a,b}(\rot(B))$ for any block $B \in \pi$.  

{\bf Case 1:}  {\em $2 \leq i \leq d$.}  Suppose $s_{i-1} > 0$.  Then $i - 1 = \min(B)$ for some non-central, non-wrapping
block $B \in \pi$.  Since $B$ is non-central and non-wrapping and $1 \leq \min(B) \leq d-1$, we know that $\rot(B)$ is also
non-central and non-wrapping with $\min(\rot(B)) = i$.  We conclude that $s_i' = s_{i-1}$.

Suppose $s_{i - 1} = 0$.  If $i-1$ is not the minimum element of a block of $\pi$, then $i$ is not the minimum element of a block of 
$\rot(\pi)$, so that $s_i' = 0$.  If $i-1 = \min(B_0)$ for a central block $B_0 \in \pi$, then $\rot(B_0)$ is a central block in $\rot(\pi)$
with $i \in \rot(B_0)$, so that $s_i' = 0$.  
If $i-1 = \min(B)$ for a wrapping block $B \in \pi$, then the fact that $1 \leq \min(B) \leq d-1$ implies that 
either $i \neq \min(\rot(B))$ or 
($\rot(B)$ is wrapping with $i \in \rot(B)$).  In either situation, we get that $s'_i = 0$.

{\bf Case 2:}  {\em $i = 1$.}
Suppose $s_d > 0$.  Then $d = \min(B)$ for some non-central, non-wrapping block $B \in \pi$.
Recalling that $\rot^d(\pi) = \pi$, 
it follows that $\rot^{d(q-1)+1}(B)$ is a non-central, non-wrapping block of $\rot(\pi)$ containing $1$.
Thus, we get $s'_1 = \rank^{\rot(\pi)}_{a,b}(\rot^{d(q-1)+1}(B)) = \rank^{\pi}_{a,b}(B) = s_q$.

Suppose $s_d = 0$.  If $d$ is contained in a central block of $\pi$, then $1$ is contained in a central
block of $\rot(\pi)$ and $s'_1 = 0$.  Since $\pi$ is noncrossing and $\rot^d(\pi) = \pi$, the index $q$
cannot be contained in a wrapping block of $\pi$.  If $d \in B$ for some block $B \in \pi$ which is non-central
and non-wrapping, we must have that $d \neq \min(B)$.  Since $\pi$ is noncrossing with $\rot^d(\pi) = \pi$, 
it follows that $\rot(B)$ is wrapping and $1 \in \rot(B)$, so that $s'_1 = 0$.
\end{proof}

What sequences $(s_1,  \dots, s_d)$ of nonnegative integers arise as $d$-modified rank sequences 
of partitions in $\NC_d(a,b)$?
If $\pi \in \NC_d(a,b)$ and $S_d(\pi) = (s_1,  \dots, s_d)$ is the $d$-modified rank sequence of $\pi$,
we claim that 
\begin{equation*}
q(s_1 + \cdots + s_d) = \sum_B \rank^{\pi}_{a,b}(B),
\end{equation*}
where the sum is over all non-central blocks $B \in \pi$.  Indeed, each $q$-element orbit of non-central blocks contributes 
the rank of one of its constituents precisely once to the nonzero terms in $S_d(\pi)$.  
By Proposition~\ref{rank-sequence-characterization},
\begin{equation*}
s_1 +  \cdots + s_d \leq \frac{a}{q},
\end{equation*}
with equality if and only if $\pi$ does not have a central block.   
(Unless $q | a$, the partition $\pi$  necessarily has a central block.)

We call a length $d$ sequence $(s_1, \dots, s_d)$ of nonnegative integers {\em good}
if we have the inequality
$s_1 +  \cdots + s_d \leq \frac{a}{q}$.
The goal for the remainder of this section is to show that the map
\begin{equation*}
S_d: \NC_d(a,b) \longrightarrow \left\{ \text{good sequences $(s_1, \dots, s_d)$} \right\}
\end{equation*}
is a bijection.
Since good sequences are easily enumerated, this will give us  information about
$\NC_d(a,b)$.
The strategy is to isolate nice subsets of $\NC_d(a,b)$ and the set of good sequences which contain at least one 
representative from every rotation orbit, show that these subsets are in bijection under $S_d$,
and apply Lemma~\ref{modified-rank-sequences-rotate}.

\begin{defn}
\label{def:noble-partition}
A set partition $\pi \in \NC_d(a,b)$ is {\em noble} if $\pi$ does not contain any wrapping blocks and,
if $\pi$ contains a central block $B_0$, we have that $1 \in B_0$.
\end{defn}

For example, consider the case $(a,b) = (6,7)$ and $d = 3$.  We have that 
$\pi = \{ \{1\}, \{2,3,5,6\}, \{4\} \} \in \NC_3(6,7)$ is not a noble partition because $1$ is not
contained in the central block.  On the other hand,
both of the rotations 
$\{ \{1,3,4,6\}, \{2\}, \{5\} \}$ and $\{ \{1,2,4,5\}, \{3\}, \{6\} \}$ of $\pi$ are noble partitions.
Also, if $(a,b) = (6,7)$ and $d = 2$ we have that 
$\sigma = \{ \{1,6\}, \{2,3\}, \{4,5\} \} \in \NC_2(6,7)$ is not a noble partition because the block
$\{1,6\}$ is wrapping.  However, the rotation
$\{ \{1,2\}, \{3,4\}, \{5,6\} \}$ of $\sigma$ is a noble partition.  
In general, we have the following observation.

\begin{observation}
\label{noble-partitions-cycle}
Every $\langle \rot \rangle$-orbit in $\NC_d(a,b)$ contains at least one noble partition. 
\end{observation}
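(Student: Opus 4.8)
The plan is to produce, for a given rotation orbit $\mathcal{O}$ in $\NC_d(a,b)$, an explicit element of $\mathcal{O}$ that is noble. Start with an arbitrary $\pi \in \NC_d(a,b)$. The obstacle to nobility has two possible sources: a wrapping block, or a central block not containing $1$. I would handle these one at a time, using the fact (from Corollary~\ref{dihedral-closed}, or just Proposition~\ref{closed-under-rotation}) that the entire rotation orbit of $\pi$ lies in $\NC_d(a,b)$, so we are free to replace $\pi$ by any rotate.

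First suppose $\pi$ has a central block $B_0$. Since $B_0$ is central, $\rot^d(B_0) = B_0$, and because $\pi$ is noncrossing the indices of $B_0$ are spread around the circle $[b-1]$ in a $\ZZ_q$-symmetric pattern; in particular $B_0$ contains exactly one element of each residue class modulo $d$ that it meets, and — this is the key point — $B_0$ must contain at least one element of $[b-1]$, so some rotate $\rot^{-t}(\pi)$ has $1$ in its central block. Replacing $\pi$ by that rotate, we may assume $1 \in B_0$. I would then argue that with $1 \in B_0$ there can be no wrapping block: a wrapping block $B$ would have $[\min(B),\max(B)]$ containing its whole $\langle\rot^d\rangle$-orbit, hence containing elements from $q$ different ``sectors'', which (combined with noncrossingness and the presence of $B_0$ straddling all sectors through the element $1$) is impossible. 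So in the central case, choosing the rotate with $1 \in B_0$ already yields a noble partition.

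Now suppose $\pi$ has no central block. Then $\ZZ_q$ acts freely on all of $\pi$'s blocks, and each orbit has at most one wrapping block. The worry is whether we can simultaneously kill \emph{all} wrapping blocks by a single rotation. Here I would observe that there is at most one wrapping block total: if two disjoint blocks $B, B'$ were each wrapping, the intervals $[\min(B),\max(B)]$ and $[\min(B'),\max(B')]$ would each have to contain a full $\langle\rot^d\rangle$-orbit, forcing each to meet every sector, hence to overlap in a way incompatible with $B, B'$ being distinct noncrossing blocks. Given the unique wrapping block $B$, rotating so that $\min(B)$ moves to a position where $B$ wraps around the ``seam'' between $b-1$ and $1$ destroys the wrapping property: concretely, pick $t$ so that in $\rot^{-t}(\pi)$ the block corresponding to $B$ has its minimum element greater than its image under one application of $\rot^d$ within the interval, i.e., the orbit is no longer confined to $[\min,\max]$. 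I would make this precise by choosing $t = \min(B) - 1 + 1 = \min(B)$ adjusted so the relevant block straddles index $b-1$; after this rotation no block is wrapping, and since there was no central block, the resulting partition is automatically noble.

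The step I expect to be the main obstacle is the claim that a partition in $\NC_d(a,b)$ can have \emph{at most one} wrapping block, and the dual claim that the presence of a central block through $1$ precludes any wrapping block. Both rest on the same combinatorial-geometric fact about $\langle\rot^d\rangle$-symmetric noncrossing partitions: a block whose min–max interval contains its whole orbit must ``see'' all $q$ sectors, and two such blocks, or such a block plus the central block, cannot coexist without a crossing. Once that uniqueness is nailed down, the rest is a matter of choosing the correct rotation amount $t$ explicitly, which is routine. I would present the uniqueness argument carefully using the interval partial order $\preceq$ and the noncrossing condition, then conclude by exhibiting the rotate that simultaneously places $1$ in the central block (if it exists) and unwraps the wrapping block (if it exists), noting these two goals do not conflict precisely because they cannot both be live at once.
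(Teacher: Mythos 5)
Your two-case decomposition (central block present vs.\ absent) is reasonable, and your treatment of the first case is essentially sound: since a central block $B_0$ is a union of $\langle\rot^d\rangle$-orbits of indices (each being a full residue class mod $d$), one can rotate so that $1\in B_0$, after which $1,1+d,\dots,1+(q-1)d\in B_0$; every non-central block then lives inside a single gap of length $d-1$, while its $\langle\rot^d\rangle$-orbit is spread over all $q$ gaps, so no block can be wrapping. (A small slip: you say $B_0$ ``contains exactly one element of each residue class modulo $d$ that it meets''; in fact it contains all $q$ elements of each such class, but this does not affect the argument.) Note also that the paper itself offers no proof — this statement is posed as an Observation with illustrative examples only — so you are filling a gap rather than matching a given argument.

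The second case, however, contains a false key claim. You assert that a $\rot^d$-invariant partition with no central block has \emph{at most one} wrapping block, reasoning that two wrapping intervals would ``overlap in a way incompatible with $B, B'$ being distinct noncrossing blocks.'' But noncrossingness forbids only \emph{partial} overlap of intervals $[\min(B),\max(B)]$; nesting is perfectly allowed, and wrapping intervals can nest. Concretely, take $(a,b)=(8,9)$, $d=4$, $q=2$, and
\begin{equation*}
\pi = \bigl\{ \{1,8\}, \{2,7\}, \{3,6\}, \{4,5\} \bigr\} \in \NC_4(8,9).
\end{equation*}
Here $\rot^4$ swaps $\{1,8\}\leftrightarrow\{4,5\}$ and $\{2,7\}\leftrightarrow\{3,6\}$, so there is no central block; the interval $[1,8]$ contains $\{4,5\}$ and the interval $[2,7]$ contains $\{3,6\}$, so \emph{both} $\{1,8\}$ and $\{2,7\}$ are wrapping. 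Your next step --- ``given the unique wrapping block $B$, rotate it past the seam'' --- therefore has nothing unique to anchor to, and moreover a single naive rotation need not help: in the example, $\rot^{-1}(\pi)$ still has the wrapping block $\{1,6\}$, and one must go to $\rot^{-2}(\pi)=\{\{1,4\},\{5,8\},\{2,3\},\{6,7\}\}$ to reach a noble partition. What is actually true (and is the usable replacement for your uniqueness claim) is that every wrapping block $B$ must satisfy $\min(B)\le d$ and $\max(B)\ge b-d$, so all wrapping intervals contain $[d,b-d]$ and hence are pairwise nested by noncrossingness; you would then need an argument that a suitable single rotation unwraps this entire nested chain at once, which your write-up does not supply. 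As it stands, the no-central-block branch of your proof has a genuine gap.
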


The  notion of nobility for sequences involves an intermediate definition.
Let $s  = (s_1, \dots, s_d)$ be a good sequence.
We call $s$ {\em very good} if 
$s_1 + \cdots + s_d = \frac{a}{q}$ or $s_1 = 0$.

We define a map
\begin{equation*}
L: \{ \text{very good sequences} \} \longrightarrow \{ \text{lattice paths from $(0, 0)$ to $(b,a)$} \}
\end{equation*}
as follows.  
If $s = (s_1, \dots, s_q)$ is a very good sequence, let $L(s)$ be the lattice path obtained by taking the $q$-fold
concatenation of
the path $N^{s_1} E \dots N^{s_d} E$, adding a terminal east step, and if $s_1 + \cdots + s_d < \frac{a}{q}$ 
adding an initial vertical run of size $c := a - q(s_1 + \cdots + s_d)$.  In symbols,
\begin{equation*}
L(s) := \begin{cases}
(N^{s_1} E \dots N^{s_d} E)^q E & \text{if $s_1 + \cdots + s_d = \frac{a}{q}$,} \\
(N^c E N^{s_2} E \dots N^{s_d} E) (N^{s_1} E \dots N^{s_d} E)^{q-1} E & \text{if $s_1 + \cdots + s_d < \frac{a}{q}$.}
\end{cases}
\end{equation*}
Since $s$ is assumed to be very good, we get that $L(s)$ ends at $(b, a)$ so that the map $L$ is well defined.  
We will refer to the subpaths $L_1, \dots, L_q$ defined by the above factorization of $L(s)$ as the {\em segments}
of $L(s)$, so that $L(s) = L_1 \cdots L_q E$.

If $s$ is a very good sequence,
the lattice path $L(s)$ is typically {\em not}
an $a,b$-Dyck path.  For example, if $(a,b) = (4,7)$, $d = 3$, and $s = (s_1, s_2, s_3) = (0,1,1)$, we have that 
\begin{equation*}
L(s) = (N^0 E N^1 E N^1 E) (N^0 E N^1 E N^1 E) E,
\end{equation*}
which is not a $4,7$-Dyck path.

\begin{defn}
\label{def:noble-sequence}
A very good sequence $s = (s_1, \dots, s_d)$ is {\em noble} if $L(s)$ is an $a,b$-Dyck path.
\end{defn}

When $(a,b)  = (4,7)$ and $d = 3$, the above example shows that $(0, 1, 1)$ is not a noble sequence.
On the other hand, the rotation $(1,1,0)$ of $(0,1,1)$ is a noble sequence.
In general, we have the following analog of Observation~\ref{noble-partitions-cycle} for good sequences.

\begin{lemma}
\label{noble-sequences-cycle}
Every good sequence is $\langle \rot \rangle$-conjugate to at least one noble sequence.
\end{lemma}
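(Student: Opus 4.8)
The plan is to start with an arbitrary good sequence $s = (s_1, \dots, s_d)$ and exhibit an explicit cyclic shift $\rot^k(s)$ that is noble, i.e.\ such that $L(\rot^k(s))$ is an $a,b$-Dyck path. First I would observe that any cyclic rotation of a good sequence is again good, since goodness only constrains the total $s_1 + \cdots + s_d$. So it suffices to find one rotation whose associated lattice path stays weakly above the line $y = \frac{a}{b}x$. The natural strategy is the standard "cycle lemma'' argument: among the $q$-fold concatenation $(N^{s_1}E \cdots N^{s_d}E)^q E$ (together with its initial padding run of $c = a - q(s_1 + \cdots + s_d)$ north steps when the sum is strictly less than $\frac{a}{q}$), there is a lowest lattice point relative to the diagonal, and cutting the cyclic word there produces a Dyck path.

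The key steps, in order, would be: (i) observe that a cyclic rotation of $s$ corresponds to cyclically rotating the factorization $N^{s_1}E \cdots N^{s_d}E$ of the fundamental domain, so that $L(\rot^k(s))$ is obtained from $L(s)$ by re-cutting the underlying cyclic word at the $k$-th $E$-step (modulo the subtleties of the terminal $E$ and the padding run $N^c$); (ii) for each lattice point $P$ on the periodic lattice word, measure its vertical deviation from the diagonal of slope $\frac{a}{b}$ through the origin, i.e.\ consider $y(P) - \frac{a}{b}x(P)$; (iii) choose the rotation so that the cut is made at the point where this deviation is minimized (breaking ties, say, by taking the easternmost such point), and argue that after re-cutting, every lattice point has deviation $\geq 0$, so the path stays weakly above the diagonal and hence is an $a,b$-Dyck path; (iv) check that this chosen rotation is in fact a \emph{very good} sequence---that is, either the sum is $\frac{a}{q}$ or the first entry $s_1$ is $0$---so that $L$ is actually defined on it and Definition~\ref{def:noble-sequence} applies. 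Point (iv) is where the padding-run case needs care: when $s_1 + \cdots + s_d < \frac{a}{q}$, the definition of $L$ inserts the run $N^c E$ at the front in place of $N^{s_1}E$, so one must verify that the minimizing cut can be taken at a position where the new first entry is $0$ (equivalently, the cut lands immediately before an $E$-step that is not preceded by any $N$-step of the fundamental block), which is exactly the freedom one has since the inserted $N^c$ run behaves like an "extra'' initial ascent.

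The main obstacle I expect is reconciling the cycle-lemma cut with the slightly ad hoc definition of the map $L$: the path $L(s)$ is not simply the cyclic word $(N^{s_1}E\cdots N^{s_d}E)^q$ but has an extra terminal $E$ and, in the deficient case, a prepended $N^c E$ that replaces the first $N^{s_1}E$. So one cannot just quote the cycle lemma verbatim; one has to set up a bijection between the $d$ cyclic shifts of $s$ and $d$ natural "cut points'' on an auxiliary periodic lattice path, verify that under this correspondence the Dyck-path condition for $L(\rot^k(s))$ is equivalent to a nonnegativity condition at the $k$-th cut point, and then invoke minimality. Once that dictionary is in place, the conclusion is immediate: the minimizing shift is noble, so every good sequence is $\langle \rot \rangle$-conjugate to a noble one. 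I would also note in passing (for use later) that this argument typically produces exactly one noble sequence per orbit when $\gcd$ conditions are generic, paralleling Observation~\ref{noble-partitions-cycle}, though only the existence statement is needed here.
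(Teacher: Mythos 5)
Your proposal follows the same cycle-lemma strategy as the paper: assign each lattice point a weight measuring its deviation from the slope-$\frac{a}{b}$ diagonal, cut the periodic word at the point of minimal weight, and argue that the resulting rotation of $s$ is noble. You also correctly isolate the real technical obstacle, namely that $L$ is only defined on \emph{very good} sequences, so in the deficient case $0 < c < a$ one must show the minimizing cut lands at a rotation whose first entry is $0$.

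However, you do not actually resolve that obstacle. Your stated justification --- ``which is exactly the freedom one has since the inserted $N^c$ run behaves like an `extra' initial ascent'' --- is an intuition, not an argument, and it is not the mechanism that makes the claim true. The paper's resolution is a short minimality argument: since $0 < c < a$ the weight strictly decreases over one period, so the minimum-weight lattice point $P_0$ is not the origin and must be a local minimum, hence follows an east step; and if that east step were preceded by a north step ($NE$), then the lattice point at the start of that $NE$ pair would have weight $w(P_0) + a - b < w(P_0)$ (using $a < b$), contradicting minimality of $P_0$. Therefore $P_0$ is preceded by $EE$, which is exactly the statement that the entry $s_{i-1}$ preceding the cut is $0$, so the rotated sequence is very good. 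This one observation is the crux of the lemma and is missing from your sketch. Separately, the paper avoids your proposed ``dictionary'' between cyclic shifts and cut points on the actual path $L(s)$ (with its padding and terminal $E$) by running the minimality argument on the simpler doubled periodic word $(N^{s_1}E \cdots N^{s_d}E)^2$ and only forming $L$ of the rotated sequence afterward; you may find that cleaner than re-cutting $L(s)$ itself. You should also note the trivial case $s = (0,\dots,0)$ (i.e.\ $c = a$) separately, since there is no nonempty vertical run to follow the cut point.
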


\begin{proof}
Let $(s_1, \dots, s_d)$ be a good sequence and set
$c = a - q(s_1 + \cdots + s_d)$.

{\bf Case 1:}  {\em $c = a$.}
In this case $(s_1, \dots, s_d)$ is  the zero sequence $(0, 0, \dots, 0)$ and is trivially noble.

{\bf Case 2:} {\em $0 < c < a$.}  The argument we present here is a modification
of the argument used to prove the Cycle Lemma. 

Consider the lattice path $L$ which
starts at the origin and ends at $(2d, 2(s_1 + \cdots + s_d))$ given by 
\begin{equation*}
L = (N^{s_1} E \dots N^{s_d} E)(N^{s_1} E \dots N^{s_d} E). 
\end{equation*} 
As is common in rational Catalan theory,
we label the lattice points $P$ on $L$ with integers $w(P)$ as follows.  
The origin is labeled $0$.
Reading $L$ from left to right, if $P$ and $P'$ are consecutive lattice points, we set 
$w(P') = w(P) - a$ if $P'$ is connected to $P$ with an $E$-step and
$w(P') = w(P) + b$ if $P'$ is connected to $P$ with an $N$-step.

For example, suppose that $(a,b) = (11,13)$, $d = 4$, and $(s_1, s_2, s_3, s_4) = (1,0,2,0)$.  
We have that $q = \frac{13}{4} = 3$ and $c = 11 - 3(1+0+2+0) = 2$.
The lattice path $L$, together with the labels of its lattice points, is as follows.

\begin{center}
\includegraphics[scale = 0.4]{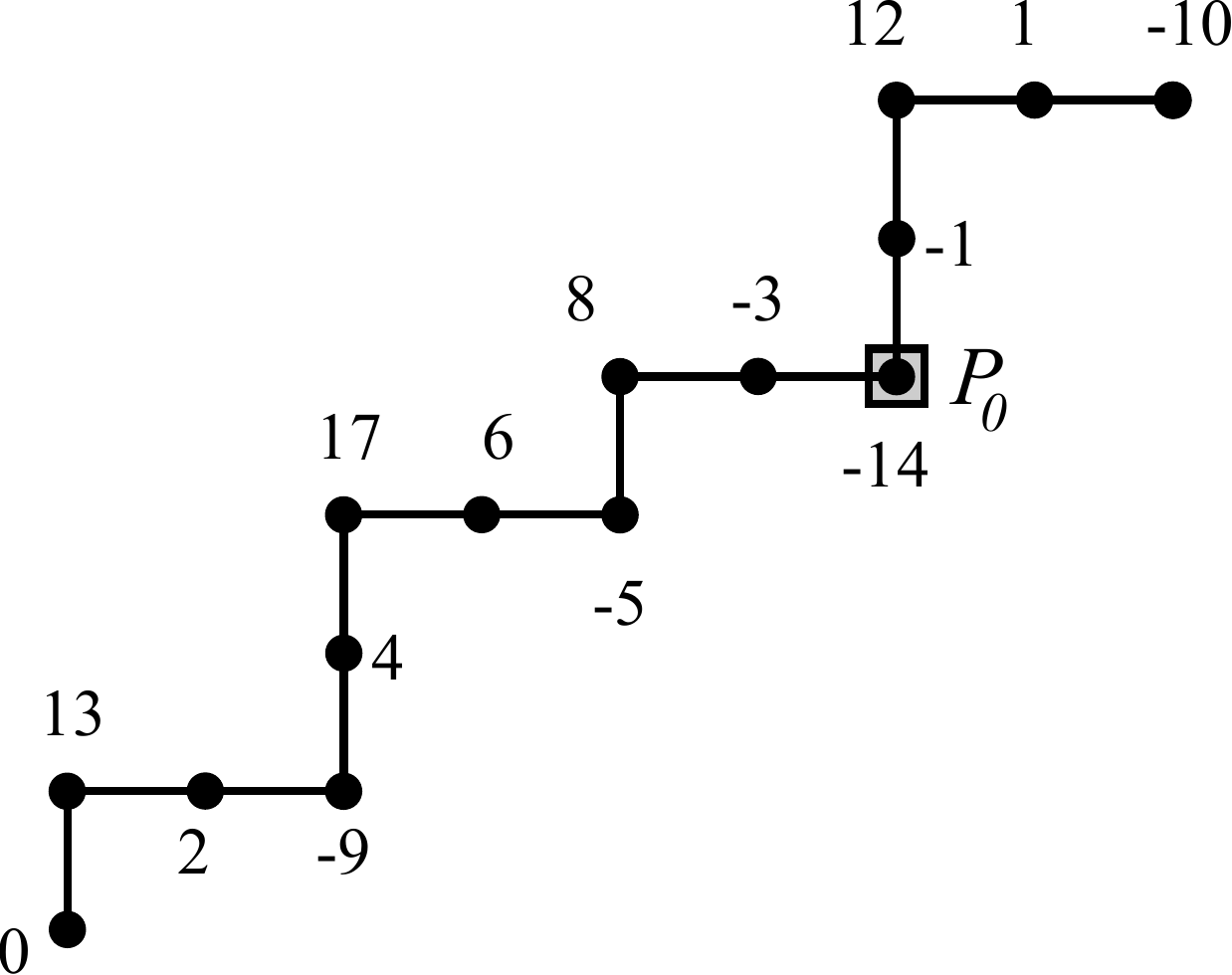}
\end{center}

By coprimality, there exists a unique lattice point on $L$ of minimal weight.
Let $P_0$ be this lattice point; in the above example, we have $w(P_0) = -14$.
We claim that $P_0$ occurs after a 
pair of consecutive east steps $EE$.  Indeed, since $0 < c < a$, we know that the weight of the terminal
lattice point of $L$ is negative (in the above example, $-10$), so that $P_0$ is not the origin. 
If $P_0$ did not 
occur after a pair of consecutive east steps, by the
minimality of $w(P_0)$, we have that $P_0$ occurs after a pair $NE$.  But since $a < b$
the lattice point $P'_0$ occurring at the beginning of this $NE$-sequence satisfies 
$w(P'_0) = w(P_0) + a - b < w(P_0)$, a contradiction.  Therefore, the lattice point $P_0$ does 
indeed occur after a pair of consecutive east steps $EE$.

The minimality of $w(P_0)$ and the fact
 that $(s_1, \dots, s_d) \neq (0, \dots, 0)$ mean that $P_0$ is immediately followed by a nonempty vertical
run $N^{s_i}$ for some $1 \leq i \leq d$.  Since $P_0$ is preceded by a pair $EE$, we get that $i \geq 2$
and $s_{i-1} = 0$.  
In the above example, we have that $i = 3$.
Since $s_{i-1} = 0$, the rotated sequence 
$(s_{i-1}, s_i, \dots, s_d, s_1, s_2, \dots, s_{i-2})$ is very good.  
The lattice path $L(s_{i-1}, s_i, \dots, s_d, s_1, s_2, \dots, s_{i-2})$ is therefore well defined.

In the above example,  we have $(s_{i-1}, s_i, \dots, s_d, s_1, s_2, \dots, s_{i-2}) = (0,2,0,1)$
and the lattice path $L(0,2,0,1)$ is shown in the proof of Lemma~\ref{surjective} below.  The segmentation
$L(s) = L_1 \dots L_q E = L_1 L_2 L_3 E$ is shown with vertical hash marks.  Observe that $L(s)$
is an $a,b$-Dyck path in this case.

We claim that the lattice path 
$L(s_{i-1}, s_i, \dots, s_d, s_1, s_2, \dots, s_{i-2})$ is always an $a,b$-Dyck path, so that
the rotation 
$(s_{i-1}, s_i, \dots, s_d, s_1, s_2, \dots, s_{i-2})$ is a noble sequence. 
Indeed, consider the segmentation $L(s) = L_1 \dots L_q E$.  The segments $L_1, \dots, L_q$ will be progressively
further east, so it is enough to show that the final segment $L_q$ stays west of the line $y = x$.
By construction, the segment $L_q$ starts with a single east step, hits the copy of the point $P_0$, then
then has a nonempty vertical run, and eventually ends at the point $(b-1,a)$.  Since $(s_1, \dots, s_d)$ 
is a good sequence, we know that $L_q$ starts at a lattice point to the west of the line 
$y = \frac{a}{b}(x+1)$.  The minimality of $w(P_0)$ forces $L_q$ to remain west of the line 
$y = \frac{a}{b} x$.

{\bf Case 3:} {\em $c = 0$.} This is a special case of the Cycle Lemma; the argument is very similar to Case 2
and is only sketched.

We again consider the lattice path $L$  given by 
$L = (N^{s_1} E \dots N^{s_d} E)(N^{s_1} E \dots N^{s_d} E)$ and assign weights $w(P)$ to the lattice points 
$P$ on $L$ as before.  There exists a unique lattice point $P_0$ on $L$ with minimal weight.  
The lattice point $P_0$ necessarily occurs before a nonempty vertical run $N^{s_i}$ for some $1 \leq i \leq d$.
The rotation $(s_i, s_{i+1}, \dots, s_d, s_1, s_2, \dots, s_{i-1})$ of $s$ is a noble sequence.
\end{proof}

Given any noble sequence $s$, we may consider the $a,b$-noncrossing partition $\pi(L(s))$ corresponding
to the Dyck path $L(s)$.  We prove that $\pi(L(s))$ is $\rot^d$-invariant and noble.

\begin{lemma}
\label{surjective}
Suppose that $s = (s_1, \dots, s_d)$ is a noble sequence
Then $\pi := \pi(L(s))$ is a noble $a,b$-noncrossing partition  (and in particular $\rot^d(\pi) = \pi$) with $S_d(\pi) = s$.
\end{lemma}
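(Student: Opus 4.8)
The plan is to transport everything through the bijection $\pi\colon\{a,b\text{-Dyck paths}\}\to\NC(a,b)$ of Proposition~\ref{nc-basic-facts}. Since $s$ is noble, $L(s)$ is an $a,b$-Dyck path, so $\pi:=\pi(L(s))$ is a well-defined element of $\NC(a,b)$, and the first step is to read off its rank sequence via Proposition~\ref{recover-dyck-path}: $R(\pi)=(r_1,\dots,r_{b-1})$ is exactly the sequence of vertical runs of $L(s)$. From the explicit factorization of $L(s)$, writing $c:=a-q(s_1+\cdots+s_d)$, this is the $q$-fold concatenation of $(s_1,\dots,s_d)$ when $c=0$, and is $(c,s_2,\dots,s_d)$ followed by $q-1$ copies of $(0,s_2,\dots,s_d)$ when $c>0$; note that $c>0$ forces $s_1=0$, since $s$ is very good. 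In particular $\sum_i r_i=a$ and every block of $\pi$ has positive rank, as it must for an element of $\NC(a,b)$.

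The heart of the argument is the identity $\rot^d(\pi)=\pi$. Because $\pi$ is injective and $\rot^{-1}\circ\pi=\pi\circ\rot'$ (Proposition~\ref{closed-under-rotation}), this is equivalent to the path statement $(\rot')^d(L(s))=L(s)$. I would prove the latter by following the action of $\rot'$ against the segmentation $L(s)=L_1\cdots L_q E$: one application of $\rot'$ advances the quasi-periodic pattern of $L(s)$ by one $x$-coordinate---either by case~(2), peeling an east step off the westernmost horizontal run, or by case~(3), the laser-guided surgery that drags the leftmost nonempty vertical run past the first segment boundary---so $d$ applications advance it by a full segment; since $L_2=\cdots=L_q$ and the leading vertical run of $L_1$ (of size $c$ when $c>0$, of size $s_1$ when $c=0$) is exactly what the definitions of $L(\cdot)$ and of ``very good'' regenerate at the front, the path closes back up to $L(s)$. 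I expect this bookkeeping to be the main obstacle: one must verify, case by case against the three-part definition of $\rot'$, that the leftmost vertical run crosses the segment boundary exactly once in $d$ steps and that the terminal east step absorbs the leftover correctly. (A cleaner but more delicate route is to realize $L(s)$ as a literally $\ZZ_q$-symmetric closed path on the cylinder obtained by gluing the lattice point just left of label $1$ to the one just right of label $b-1$, whence the induced partition of the cyclic index set $\{1,\dots,b-1\}$ is automatically $\rot^d$-invariant.)

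With $\rot^d(\pi)=\pi$ in hand, so that $\pi\in\NC_d(a,b)$ and $\langle\rot^d\rangle$ acts freely on the non-central blocks of $\pi$, the remaining claims are read off from $R(\pi)$ by locating the central and wrapping blocks. From the block count $\#\{m:r_m>0\}$ one sees that $\pi$ has $1+q\cdot\#\{i\in\{2,\dots,d\}:s_i>0\}$ blocks when $c>0$ and $q\cdot\#\{i\in\{1,\dots,d\}:s_i>0\}$ blocks when $c=0$; since the non-central blocks come in $\langle\rot^d\rangle$-orbits of size $q$ and there is at most one central block, $\pi$ has a unique central block exactly when $c>0$, and that central block must be the one visible from the leftmost vertical run---the run of size $c$ at the origin---so its minimum is $1$, matching Definition~\ref{def:noble-partition}. (That this origin run carries the central block, rather than some interior run, follows from $\rot^d(\pi)=\pi$ together with the fact that the positions $\equiv 1\pmod d$ are cyclically permuted by $\rot^d$ and, by $R(\pi)$, are not block minima beyond position $1$.) No block of $\pi$ is wrapping: a non-central block $B$ is visible from a run $V_{1,i}$ inside segment~$1$ with $\min(B)=i\le d$, its $\langle\rot^d\rangle$-orbit contains a block of minimum $(q-1)d+i$, and the constraint that $L(s)$ stay weakly above $y=\frac{a}{b}x$ limits the reach of the laser from $V_{1,i}$ so that $\max(B)$ falls short of the maximum of that orbit-mate; hence $[\min(B),\max(B)]$ cannot swallow the orbit. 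Finally, for each $i\in\{1,\dots,d\}$ with $s_i>0$ other than the central origin slot, the block visible from $V_{1,i}$ is non-central, non-wrapping, has minimum $i$ and rank $s_i$ by the first step, while all other entries of $S_d(\pi)$ vanish---in particular entry $1$ records $0$ when $c>0$, matching $s_1=0$. Therefore $S_d(\pi)=(s_1,\dots,s_d)=s$ and $\pi$ is noble, as claimed.
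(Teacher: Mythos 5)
Your overall plan — pass through the bijection $\pi$ to the Dyck path side, establish $(\rot')^d(L(s)) = L(s)$, and then read off nobility and $S_d(\pi)$ from the rank sequence — is a genuinely different route from the paper's, which never touches the path operator $\rot'$. But as written the argument has a real gap exactly where you flag it: you never actually prove $(\rot')^d(L(s)) = L(s)$. The sentence ``one application of $\rot'$ advances the quasi-periodic pattern by one $x$-coordinate'' glosses over the fact that case~(3) of $\rot'$ is not a translation at all but a laser-guided cut-and-reassemble of the path into four blocks; verifying that $d$ iterations of this, mixing cases (2) and (3) in an order that depends on $s$, close up to $L(s)$ is precisely the hard work, and the parenthetical ``cylinder'' alternative is likewise an intuition rather than a proof. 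Since everything downstream (the unique central block sitting at position $1$ when $c>0$, the absence of wrapping blocks, the identity $S_d(\pi)=s$) is deduced \emph{from} $\rot^d(\pi)=\pi$, the proposal as it stands proves nothing beyond $\pi\in\NC(a,b)$.

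The ``no wrapping blocks'' step is also too soft. Saying that the Dyck constraint ``limits the reach of the laser from $V_{1,i}$ so that $\max(B)$ falls short'' is a plausibility argument, not a proof; one has to actually locate where those lasers land. The paper's proof sidesteps both difficulties by analyzing the lasers of $L(s)$ directly rather than iterating $\rot'$: for each nonempty vertical run $N^{s_i}$, the lasers fired from its copies in consecutive segments $L_j$ and $L_{j+1}$ are rigid translates of one another (or, when $c=0$, both terminate on the final east step). This single observation simultaneously shows that the non-central blocks come in $\rot^d$-orbits (hence $\rot^d(\pi)=\pi$), pins the central block at the origin run when $c>0$ (since $s_1=0$ forces $1$ into the $\rot^d$-stable block), and rules out wrapping blocks (when $c>0$, a central block through $1$ topologically separates the disk; when $c=0$, the $N^{s_1}$-lasers all hit the terminal east step). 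If you want to salvage the $\rot'$ approach, you would need to carry out the case analysis of how many of the $d$ applications fall into case~(2) vs.\ case~(3) and track the reassembled path explicitly — which is likely to be longer than the laser argument you are trying to avoid.
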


\begin{proof}
Recall the visibility bijection between blocks of $\pi$ and 
nonempty vertical runs in $L(s)$.  
Let $c = a - q(s_1 + \cdots + s_d)$.
The argument depends on whether $c > 0$ or $c = 0$.

{\bf Case 1:} $c > 0$.
We consider the segmentation of $L(s)$ given by
\begin{equation*}
L(s) = L_1 L_2 \dots L_q E,
\end{equation*}
 where
$L_1 = N^c E N^{s_2} E \dots N^{s_d} E$ and
$L_j = N^{s_1} E N^{s_2} E \dots N^{s_d} E$ for $2 \leq j \leq q$.

As an example of this case, consider $(a, b) = (11, 13)$, $d = 4$, $s = (0,2,0,1)$.
Then $s$ is a noble sequence.
The $11,13$-Dyck path 
$L(s) = L_1 L_2 L_3 E$ is shown below, where vertical hash marks separate the segments
$L_1, L_2,$ and $L_3$.  The valley lasers from $L(s)$ are as shown.  For visibility,
we suppress the labels on the interior lattice points.

\begin{center}
\includegraphics[scale = 0.3]{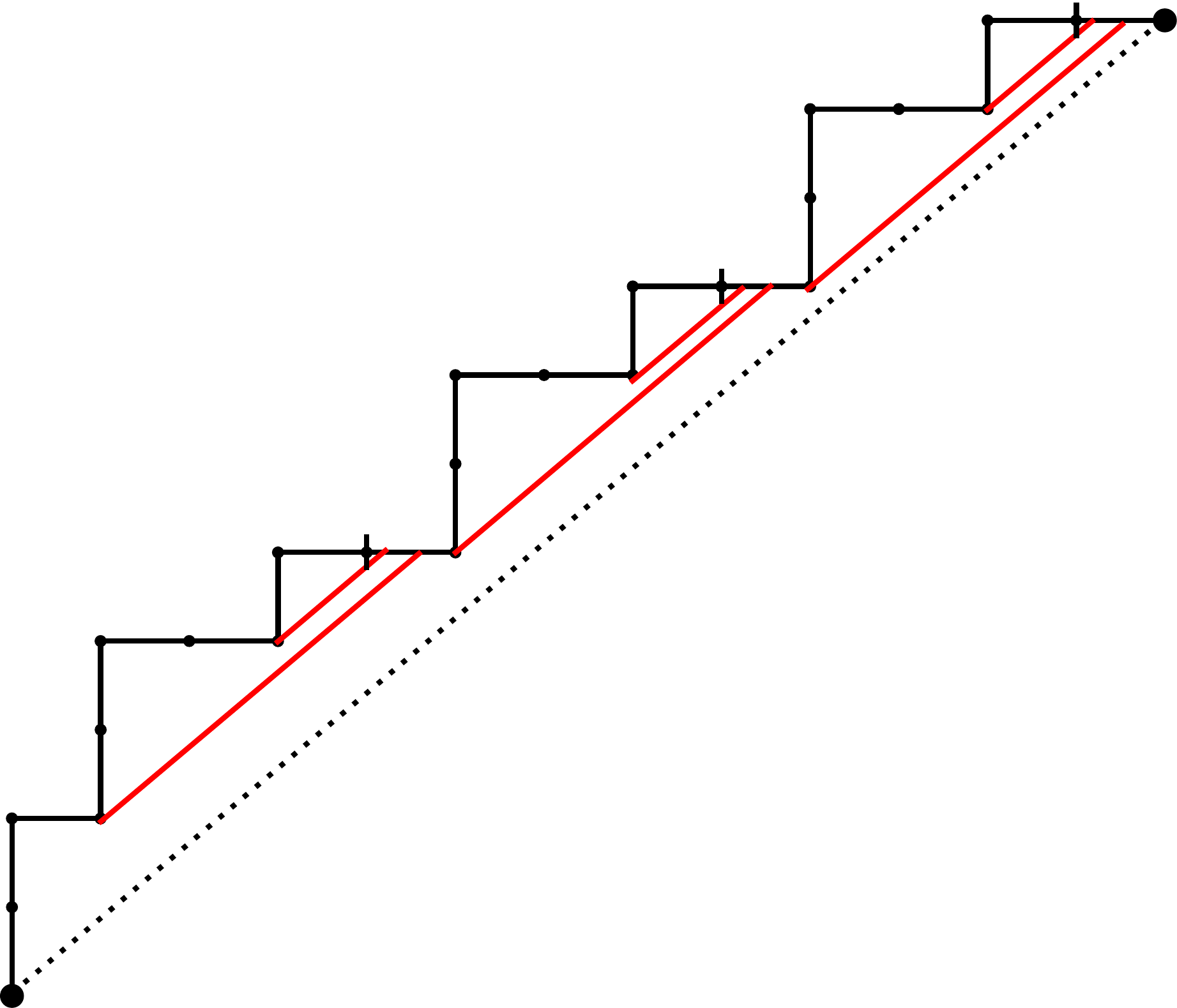}
\end{center}

Since $s$ is noble and $c > 0$, we have $s_1 = 0$.
Fix any index $2 \leq  i \leq d$ such that $s_i > 0$ and another index $1 \leq j \leq q-1$.
Both of the segments $L_j$ and $L_{j+1}$ of the $a,b$-Dyck path $L(s)$ contain a copy of the nonempty vertical
run $N^{s_i}$.
If $P_0$ and $P_1$ are the valleys at the bottom of these vertical runs in $L_j$ and $L_{j+1}$, respectively, 
we have that the lasers $\ell(P_0)$ and $\ell(P_1)$ fired from $P_0$ and $P_1$ are (rigid)
translations of 
the same line segment.  
In particular, the block visible from the copy of $N^{s_i}$ in $L_{j+1}$
is the image of the block visible from the copy of $N^{s_i}$ in $L_j$ by the operator $\rot^d$.  Moreover,
the fact that $s_1 = 0$ means that neither of these blocks contain the index $1$.

In the above example, we have three segments ($L_1, L_2,$ and $L_3$) with $s_i > 0$ for $i = 2, 4$.
The blocks visible from the copies of $N^{s_2} = N^2$ in $L_1, L_2,$ and $L_3$ are
$\{2,3\}, \{6,7\},$ and $\{10,11\}$, respectively.  The blocks visible from the copies of $N^{s_4} = N^1$
in $L_1, L_2,$ and $L_3$ are $\{4\}, \{8\},$ and $\{12\}$, respectively.  The block visible from
the initial vertical run is $\{1,5,9\}$, which is central.
 
In general, we conclude that 
the set of blocks of $\pi$ not containing $1$ is stable under the action of $\rot^d$, so that the block
of $\pi$ containing $1$ must be central. 
 We get that $\pi \in \NC_d(a,b)$.  Since $\pi \in \NC_d(a,b)$ has a central block
containing $1$, the partition $\pi$ contains no wrapping blocks.  It follows that  $\pi$ in noble and
$S_d(\pi) = s$.

{\bf Case 2:}  $c = 0$.
In this case,
$\pi$ does not contain a central block.  
We again consider the segmentation $L(s) = L_1 L_2 \dots L_q E$ as in Case 1.
Here $L_j = N^{s_1} E N^{s_2} E \dots N^{s_d} E$ for $1 \leq j \leq q$.

As an example of this case, consider $(a,b) = (9,13), d = 4,$ and $s = (1,2,0,0)$. 
We have that $s$ is a noble sequence.
 The $9,13$-Dyck
path $L(s)$ is shown below, with diagonal hash marks denoting the segmentation
$L(s) = L_1 L_2 L_3 E$.  The valley lasers of $L(s)$ are shown, and the interior lattice point
labels are suppressed.

\begin{center}
\includegraphics[scale = 0.3]{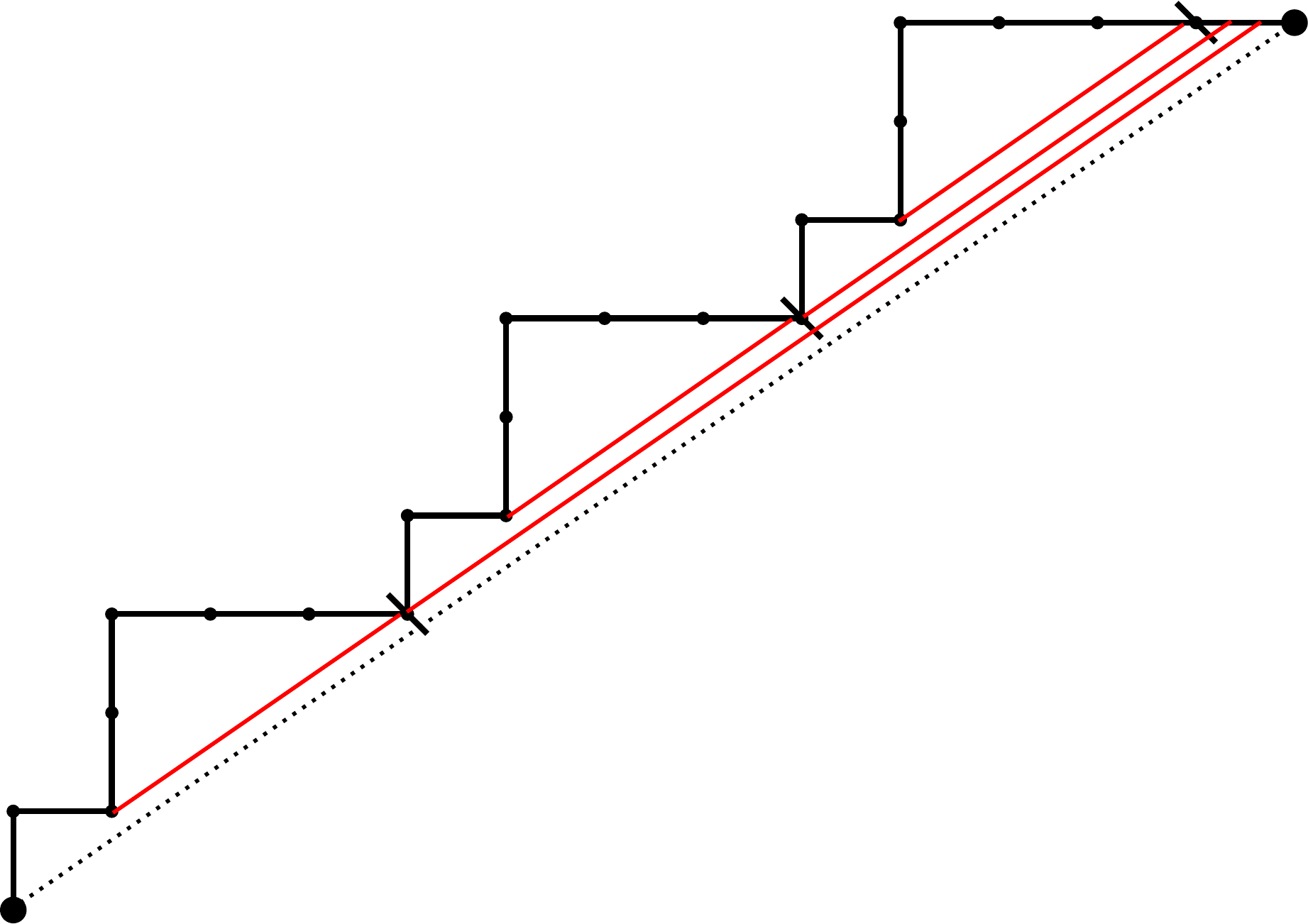}
\end{center}

Let $1 \leq i \leq d$ be an index such that $s_i > 0$ and consider any two 
consecutive segments
$L_j$ and $L_{j+1}$ of $L(s)$.  
The lasers fired from the valleys below the copies of $N^{s_i}$ both $L_j$ and $L_{j+1}$ are either
\begin{itemize}
\item  translates of each other, or
\item both hit the Dyck path $L(s) = L_1 \dots L_q E$ on its terminal east step.
\end{itemize}
In the above example, we see that the lasers fired from each copy of $N^{s_2} = N^2$ are translates of one another,
and the lasers fired from each copy of $N^{s_1} = N^1$ hit $L(s)$ on its terminal east step.
This implies that the blocks corresponding of these vertical runs are $\langle \rot^d \rangle$-conjugate, so that 
$\pi \in \NC_d(a,b)$.  

Moreover, since $c = 0$ the laser fired from any copy of  the (nonempty) vertical run $N^{s_1}$ in 
$L(s) = L_1 \dots L_q E$ must hit $L(s)$ on its terminal east step.  This implies that $\pi$ has no wrapping blocks, so that 
$\pi$ is noble and $S_d(\pi) = s$.  
\end{proof}

We  show that nobility for sequences and nobility for partitions coincide.

\begin{lemma}
\label{injective}
Suppose that $\pi \in \NC_d(a,b)$. Then $\pi$ is noble if and only if $S_d(\pi)$ is noble.
\end{lemma}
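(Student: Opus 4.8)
The statement to prove is Lemma~\ref{injective}: for $\pi \in \NC_d(a,b)$, the partition $\pi$ is noble if and only if the sequence $S_d(\pi)$ is noble. The plan is to prove the two directions separately, but both will rest on the constructions already established in Lemmas~\ref{surjective} and the nobility-cycle observations. The key preliminary observation is that by Lemma~\ref{surjective}, for any \emph{noble sequence} $s$ the partition $\pi(L(s))$ is a noble partition with $S_d(\pi(L(s))) = s$; combined with injectivity of the map $\pi \mapsto \overline{\pi}$ underlying $\pi$ (Proposition~\ref{nc-basic-facts}(1)) this gives a nearly-complete dictionary.

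\textbf{Direction 1: if $S_d(\pi)$ is noble, then $\pi$ is noble.} First I would observe that $S_d(\pi)$ is always a good sequence (this was noted in the text following Lemma~\ref{modified-rank-sequences-rotate}). Suppose $s := S_d(\pi)$ is noble, hence very good, so $L(s)$ is an $a,b$-Dyck path. By Lemma~\ref{surjective} the partition $\pi' := \pi(L(s))$ is noble with $S_d(\pi') = s = S_d(\pi)$. It now suffices to show that a partition $\sigma \in \NC_d(a,b)$ is determined by its $d$-modified rank sequence, which forces $\pi = \pi'$, hence $\pi$ noble. To see the determinacy, I would argue that from $S_d(\pi)$ and the knowledge of $q$ and $a$, one recovers the full ordinary rank sequence $R(\pi)$: the non-central, non-wrapping blocks are copies under $\langle \rot^d\rangle$ of those recorded in $S_d(\pi)$, contributing $q(s_1 + \cdots + s_d)$ to the rank total; the remaining rank $c = a - q(s_1 + \cdots + s_d)$ must all be carried by a central block (if $c > 0$) or by nothing (if $c = 0$, no central block), and a central block's position is forced by noncrossingness and $\rot^d$-invariance. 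This reconstructs $R(\pi)$, hence the Dyck path $D$ by Proposition~\ref{recover-dyck-path}, hence $\pi$ by Proposition~\ref{nc-basic-facts}(1). Thus $\pi = \pi'$.

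\textbf{Direction 2: if $\pi$ is noble, then $S_d(\pi)$ is noble.} Suppose $\pi$ is noble. Let $D$ be the $a,b$-Dyck path for $\pi$, and let $c := a - q(s_1 + \cdots + s_d)$ where $S_d(\pi) = (s_1,\dots,s_d)$. Since $\pi$ has no wrapping blocks, the laser fired from the valley below each vertical run visible from a non-central block hits $D$ to the right of that run's $\langle\rot^d\rangle$-orbit — in fact on the terminal east step when $c = 0$, or else is a translate within the periodic structure — which forces $D$ to have the periodic shape $L(S_d(\pi))$ (using Proposition~\ref{recover-dyck-path}(2) applied to the $\rot^d$-symmetric rank sequence). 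More precisely: $\rot^d(\pi) = \pi$ together with Proposition~\ref{dihedral-invariance} shows $R(\pi)$ is periodic with period $d$ away from the initial segment carrying any central block; nobility says $1$ lies in the central block (so $s_1 = 0$ when $c > 0$), and $s_1 + \cdots + s_d = a/q$ when there is no central block (so $c = 0$). Either way $S_d(\pi)$ is very good and $L(S_d(\pi)) = D$ is an $a,b$-Dyck path, so $S_d(\pi)$ is noble by Definition~\ref{def:noble-sequence}.

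\textbf{Main obstacle.} The technical heart is the reconstruction step in Direction~1 — verifying that a partition in $\NC_d(a,b)$ is genuinely determined by its $d$-modified rank sequence, i.e.\ that $S_d$ is injective. One must handle carefully the position of the central block (when one exists) and confirm that noncrossingness plus $\rot^d$-invariance pin it down uniquely given the data of the non-central orbits; the interplay between the "forgotten" wrapping-block data and the recorded data is exactly what nobility is designed to control, so the argument naturally parallels and reuses the case analysis ($c > 0$ versus $c = 0$) from the proof of Lemma~\ref{surjective}. Once that determinacy is in hand, both directions close quickly.
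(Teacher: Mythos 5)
Your Direction 2 argument (noble partition implies noble sequence) is essentially the paper's: no wrapping blocks together with $\rot^d$-invariance and dihedral invariance of ranks forces $R(\pi)$ into the periodic form, which identifies the Dyck path with $L(S_d(\pi))$, and nobility of $\pi$ (central block containing $1$) gives $s_1 = 0$ when $c>0$. That half is fine.

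Direction 1 has a genuine gap. You reduce ``$S_d(\pi)$ noble $\Rightarrow$ $\pi$ noble'' to the statement that $S_d$ is injective on $\NC_d(a,b)$, and then cite Lemma~\ref{surjective} to produce a noble $\pi' = \pi(L(S_d(\pi)))$ with the same $d$-modified rank sequence. But injectivity of $S_d$ is precisely Proposition~\ref{modified-rank-sequence-unique}, and the paper's proof of that proposition \emph{uses} Lemma~\ref{injective}: it first invokes the lemma to conclude that any $\pi$ with $S_d(\pi) = s$ noble is itself noble, and only \emph{then} does the periodic form of $R(\pi)$ become available to force $|S_d^{-1}(s)| = 1$. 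Your argument therefore runs in a circle unless you supply an independent proof of injectivity. The sketch you give for such a proof (``the non-central, non-wrapping blocks are copies under $\langle\rot^d\rangle$ of those recorded in $S_d(\pi)$,'' ``the remaining rank $c$ must all be carried by a central block,'' ``a central block's position is forced'') does not handle the crux of the matter: if $\pi$ has a wrapping block $B$, then $r_{\min(B)}>0$ while $s_{\min(B)}=0$, so $R(\pi)$ is \emph{not} recoverable from $S_d(\pi)$ by the naive periodic extension. Determining which positions in $[d]$ hide a wrapping block (and, relatedly, whether the central block contains $1$) is exactly the content of the lemma you are trying to prove, so it cannot be assumed.

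The paper's own $\Leftarrow$ direction is a direct contradiction argument that sidesteps injectivity entirely. Assuming $\pi$ has a wrapping block $B$ chosen with $[\min(B),\max(B)]$ maximal, one first shows $1 \notin B$ (else $s_1=0$ forces $L(S_d(\pi))$ to begin with an east step). Then $[1,\min(B)-1]$ is a union of non-central, non-wrapping blocks, so the first $\min(B)-1$ entries of $S_d(\pi)$ agree with those of $R(\pi)$, and Proposition~\ref{recover-dyck-path} together with Proposition~\ref{ranks-add} pins the Dyck path of $\pi$ to the point $(\min(B)-1, \lceil \tfrac{a}{b}(\min(B)-1)\rceil)$. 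Since $B$ is wrapping, $s_{\min(B)} = 0$, so $L(S_d(\pi))$ takes an east step from that lattice point and crosses $y = \tfrac{a}{b}x$, contradicting that $L(S_d(\pi))$ is a Dyck path. A parallel argument shows a central block not containing $1$ also forces $L(S_d(\pi))$ off the diagonal. You would need an argument of this flavor in place of the appeal to injectivity.
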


\begin{proof}  Write $S_d(\pi) =  (s_1, \dots, s_d)$ and set $c = a - q(s_1 + \cdots + s_d)$.

($\Rightarrow$)  Suppose $\pi$ is noble. 
 Since $\pi$ contains no wrapping blocks, the rank sequence $R(\pi)$ is given by
the formula 
\begin{equation*}
R(\pi) = 
\begin{cases}
(s_1, \dots, s_d, s_1, \dots, s_d, \dots, s_1, \dots, s_d) & \text{if $s_1 + \cdots + s_d = \frac{a}{q},$} \\
(c, \dots, s_d, s_1, \dots, s_d, \dots, s_1, \dots, s_d) & \text{if $s_1+ \cdots + s_d < \frac{a}{q}.$}
\end{cases}
\end{equation*}
If $c > 0$, since $1$ is contained in the central block of $\pi$ we get $s_1 = 0$, so that $S_d(\pi)$ is very good.
By Proposition~\ref{recover-dyck-path}, we get that $L(s)$ is an $a,b$-Dyck path, so that $S_d(\pi)$ is noble.

($\Leftarrow$)  Suppose $S_d(\pi)$ is noble.  We claim that $\pi$ contains no wrapping blocks.  
Working towards a contradiction, assume that $\pi$ contains at least one wrapping block and choose a wrapping
block $B$ of $\pi$ such that the interval $[\min(B), \max(B)]$ is maximal under containment.

If $1 \in B$, then we would have $s_1 = 0$ since $B$ is wrapping, making the first step of
the path $L(S_d(\pi))$ an east step.  This contradicts the nobility of $S_d(\pi)$.  We conclude that $1 \notin B$.

Since $1 \notin B$, the interval $[1, \min(B) - 1]$ is nonempty.
By our choice of $B$,
we have that $[1, \min(B) - 1]$ is a union
blocks of $\pi$, none of which are central or wrapping.
This means that the first $\min(B) - 1$ terms in the $d$-modified rank sequence
$S_d(\pi)$ coincide with the first $\min(B) - 1$ terms in the ordinary rank sequence
$R(\pi)$.

By Proposition~\ref{recover-dyck-path},
the $a,b$-Dyck path corresponding to $\pi$  starts at the origin
 with the subpath $N^{s_1} E N^{s_2} E \dots N^{s_{\min(B)-1}} E$.
 Since $[1, \min(B) - 1]$ is a union of blocks of $\pi$, 
 Corollary~\ref{ranks-add} guarantees that
 this subpath
ends at the point $(\min(B) - 1, \left \lceil \frac{a}{b} (\min(B) - 1) \right \rceil)$.
It follows that the subpath $(N^{s_1} E N^{s_2} E \dots N^{s_{\min(B)-1}} E)E$
 obtained by appending a single east step
crosses the diagonal $y = \frac{a}{b} x$.  But since
$B$ is wrapping, we have $s_{\min(B)} = 0$, so that 
this is an initial subpath of $L(S_d(\pi))$, so that
$L(S_d(\pi))$ is not an $a,b$-Dyck path. 
This contradicts the nobility of $S_d(\pi)$.  We conclude that $\pi$ contains no wrapping blocks.

Suppose that $\pi$ contains a central block $B_0$.  We need to prove that $1 \in B_0$.  
If $1 \notin B_0$, the fact that $\pi$ does not contain wrapping blocks implies that 
$[1, \min(B_0) - 1]$ is a union of non-wrapping, non-central blocks of $\pi$.  
The first $\min(B_0) - 1$ terms of the $d$-modifed rank sequence $S_d(\pi)$ coincide with the 
corresponding terms of the ordinary rank sequence $R(\pi)$.  The same reasoning as
in the last paragraph implies that the lattice path
$L(S_d(\pi))$ contains the point 
$(\min(B_0) - 1, \left \lceil \frac{a}{b} (\min(B_0) - 1) \right \rceil)$.  However, since $B_0$ is central,
we have that $s_{\min(B_0)} = 0$, so that the lattice path $L(S_d(\pi))$ has an east step originating 
from this lattice point.  But this means that $L(S_d(\pi))$ is not an $a,b$-Dyck path, contradicting
the nobility of $S_d(\pi)$.  We conclude that $1 \in B_0$, and that $\pi$ is a noble partition.
\end{proof}

We have the lemmata we need to prove that the map $S_d$ is a bijection.  In particular, partitions in
$\NC_d(a,b)$ are determined by their $d$-modified rank sequences.

\begin{proposition}
\label{modified-rank-sequence-unique}
The map 
$S_d: \NC_d(a,b) \longrightarrow \left\{ \text{good sequences $(s_1, \dots, s_d)$} \right\}$
is a bijection which commutes with the action of rotation.
\end{proposition}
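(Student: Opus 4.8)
The plan is to assemble Lemmas~\ref{modified-rank-sequences-rotate}, \ref{noble-sequences-cycle}, \ref{surjective}, and \ref{injective} together with Observation~\ref{noble-partitions-cycle}. The assertion that $S_d$ commutes with rotation is exactly Lemma~\ref{modified-rank-sequences-rotate}; note the codomain is stable under $\rot$ because rotation preserves the sum $s_1 + \cdots + s_d$, so the claim is well posed. It then remains to prove that $S_d$ is a bijection, and I would do this by first recording the restricted bijection between \emph{noble} partitions and \emph{noble} sequences and then bootstrapping to all of $\NC_d(a,b)$ via rotation.

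For the restricted statement: given a noble sequence $s$, Lemma~\ref{surjective} produces a noble partition $\pi(L(s)) \in \NC_d(a,b)$ with $S_d(\pi(L(s))) = s$. Conversely, I claim that every noble partition $\pi$ satisfies $\pi = \pi(L(S_d(\pi)))$. This is the content of the forward direction of Lemma~\ref{injective}: since a noble $\pi$ has no wrapping blocks, its ordinary rank sequence $R(\pi)$ is the $q$-fold concatenation of $S_d(\pi)$, with the leading entry replaced by the rank $c = a - q(s_1 + \cdots + s_d)$ of the central block when $c > 0$ (in which case $s_1 = 0$, since $1$ lies in the central block); hence by Proposition~\ref{recover-dyck-path} the $a,b$-Dyck path associated to $\pi$ is literally $L(S_d(\pi))$, so $\pi = \pi(L(S_d(\pi)))$. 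Thus $S_d$ restricts to a bijection from noble partitions onto noble sequences, with inverse $s \mapsto \pi(L(s))$.

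To finish, I argue surjectivity and injectivity of the full map. For surjectivity, let $t$ be a good sequence; by Lemma~\ref{noble-sequences-cycle} we may write $t = \rot^k(s)$ for a noble sequence $s$, and then $\rot^k(\pi(L(s))) \in \NC_d(a,b)$ (this set is $\rot$-stable because $\rot$ commutes with $\rot^d$ and $\NC(a,b)$ is closed under rotation by Proposition~\ref{closed-under-rotation}) satisfies $S_d(\rot^k(\pi(L(s)))) = \rot^k(s) = t$ by Lemma~\ref{modified-rank-sequences-rotate}. For injectivity, suppose $\pi_1, \pi_2 \in \NC_d(a,b)$ with $S_d(\pi_1) = S_d(\pi_2)$; by Observation~\ref{noble-partitions-cycle} choose $k$ with $\rot^k(\pi_1)$ noble. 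Then $S_d(\rot^k(\pi_2)) = \rot^k(S_d(\pi_2)) = \rot^k(S_d(\pi_1)) = S_d(\rot^k(\pi_1))$ is a noble sequence, so Lemma~\ref{injective} forces $\rot^k(\pi_2)$ to be noble as well; the restricted injectivity then gives $\rot^k(\pi_1) = \rot^k(\pi_2)$, hence $\pi_1 = \pi_2$.

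The only real substance beyond bookkeeping is the identity $\pi = \pi(L(S_d(\pi)))$ for noble $\pi$ — the step where the $d$-modified rank sequence must be matched against the full rank sequence and Proposition~\ref{recover-dyck-path} applied — and this computation is already carried out inside the proof of Lemma~\ref{injective}. The remaining delicacy is purely organizational: using Lemma~\ref{injective} to transfer nobility from $\pi_1$ to $\pi_2$ under the same rotation, which works precisely because nobility of a partition is detected by nobility of its modified rank sequence. I expect no genuinely new obstacle here.
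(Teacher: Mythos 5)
Your argument is correct and follows essentially the same route as the paper: establish that $S_d$ commutes with rotation (Lemma~\ref{modified-rank-sequences-rotate}), restrict to noble partitions and noble sequences using Lemmas~\ref{surjective}, \ref{injective}, and Proposition~\ref{recover-dyck-path} to get a bijection there, and then bootstrap to all of $\NC_d(a,b)$ via Observation~\ref{noble-partitions-cycle} and Lemma~\ref{noble-sequences-cycle}. Your writeup is somewhat more explicit about the bootstrapping steps for injectivity and surjectivity (the paper compresses these into a one-sentence citation), but the key lemmas and the underlying logic are identical.
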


\begin{proof}
By Lemma~\ref{modified-rank-sequences-rotate}, we know that $S_d$ commutes with rotation.  If $s$ is a noble sequence,
Lemma~\ref{surjective} shows that $S_d^{-1}(s)$ is nonempty, and Lemma~\ref{noble-sequences-cycle} shows that
$S_d$ is surjective.

To see that $S_d$ is injective, let $s$ be a noble sequence and suppose $\pi \in \NC_d(a,b)$ satisfies
$S_d(\pi) = s = (s_1, \dots, s_d)$.  By Lemma~\ref{injective}, the partition $\pi$ is noble.  The rank sequence $R(\pi)$ is therefore
\begin{equation*}
R(\pi) = 
\begin{cases}
(s_1, \dots, s_d, s_1, \dots, s_d, \dots, s_1, \dots, s_d) & \text{if $s_1 + \cdots + s_d = \frac{a}{q},$} \\
(c, \dots, s_d, s_1, \dots, s_d, \dots, s_1, \dots, s_d) & \text{if $s_1+ \cdots + s_d < \frac{a}{q},$}
\end{cases}
\end{equation*}
where $c = a - q(s_1 + \cdots + s_d)$.
By Proposition~\ref{recover-dyck-path},
any $a,b$-noncrossing partition is determined by its rank sequence. 
We conclude that $|S_d^{-1}(s)| = 1$.
By Observation~\ref{noble-partitions-cycle} and Lemma~\ref{noble-sequences-cycle}, together with the 
fact that $S_d$ commutes with $\rot$ (Lemma~\ref{modified-rank-sequences-rotate}),
we conclude that $S_d$ is a bijection.
\end{proof}

Since every nonzero entry in a $d$-modified rank sequence $S_d(\pi) = (s_1, \dots, s_d)$ corresponds to a
$\langle \rot^d \rangle$-orbit
of non-central blocks of $\pi$ of that rank, the following result follows immediately from 
Proposition~\ref{modified-rank-sequence-unique}.

\begin{corollary}
\label{symmetric-kreweras-count}
Let $a < b$ be coprime positive integers and let $d | (b-1)$ be a divisor with $1 \leq d < b-1$.
Let $m_1, \dots, m_a$ be nonnegative integers which satisfy $\frac{b-1}{d}(m_1 + 2m_2 \cdots + am_a) \leq a$.
Write $m := m_1 + m_2 + \cdots + m_a$.

The number of $a,b$-noncrossing partitions which are invariant under $\rot^d$ and have $m_i$ 
orbits of non-central blocks 
of $a,b$-rank $i$ under the action of $\langle \rot^d \rangle$ is the multinomial coefficient
\begin{equation*}
{d \choose m_1, m_2, \dots, m_a, d-m}.
\end{equation*}
\end{corollary}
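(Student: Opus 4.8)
The plan is to transport the entire count through the bijection $S_d$ of Proposition~\ref{modified-rank-sequence-unique}, turning it into a routine count of arrangements of a multiset. The first thing to record is the dictionary between the data in the statement and the $d$-modified rank sequence. As observed in the discussion preceding the corollary, for $\pi \in \NC_d(a,b)$ the nonzero entries of $S_d(\pi) = (s_1, \dots, s_d)$ are in bijection with the $\langle \rot^d \rangle$-orbits of non-central blocks of $\pi$, the value of an entry being the common $a,b$-rank of the blocks in the corresponding orbit (the ranks within an orbit agree by Proposition~\ref{dihedral-invariance}). Hence $\pi$ has exactly $m_i$ orbits of non-central blocks of $a,b$-rank $i$ for every $1 \le i \le a$ if and only if the word $S_d(\pi)$ has exactly $m_i$ letters equal to $i$ for each such $i$; since $S_d(\pi)$ has length $d$ and $m = m_1 + \cdots + m_a$, the remaining $d - m$ letters are then forced to equal $0$.

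Next I would apply Proposition~\ref{modified-rank-sequence-unique}, which identifies $\NC_d(a,b)$ with the set of good sequences of length $d$, i.e.\ the nonnegative integer words $(s_1, \dots, s_d)$ with $s_1 + \cdots + s_d \le a/q$, where $q = (b-1)/d$. Under this identification the partitions to be counted correspond exactly to the good words whose multiset of letters has $m_i$ copies of $i$ for $1 \le i \le a$ and $d - m$ copies of $0$. But any word with this letter multiset has letter sum $\sum_{i=1}^a i\,m_i = m_1 + 2m_2 + \cdots + am_a$, and the goodness inequality $s_1 + \cdots + s_d \le a/q$ is precisely $q(m_1 + 2m_2 + \cdots + am_a) \le a$, which is the hypothesis $\frac{b-1}{d}(m_1 + 2m_2 + \cdots + am_a) \le a$. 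So under that hypothesis every word with the prescribed letter multiset is automatically good, and the count equals the number of such words, namely the multinomial coefficient ${d \choose m_1, m_2, \dots, m_a, d-m}$.

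There is essentially no obstacle here once Proposition~\ref{modified-rank-sequence-unique} is in hand; the one point that warrants a second look is the orbit-to-entry dictionary of the first paragraph. The subtlety is that a $\langle \rot^d \rangle$-orbit of non-central blocks may contain a wrapping block, which is deliberately omitted from $S_d$, so one must confirm that such an orbit still contributes exactly one nonzero entry --- the unique non-wrapping conjugate whose minimum lies in $[d]$ --- rather than zero or two. This is exactly the content of the identity $q(s_1 + \cdots + s_d) = \sum_B \rank^{\pi}_{a,b}(B)$, with the sum over the non-central blocks $B$ of $\pi$, justified before the statement, so it may be cited directly rather than re-derived.
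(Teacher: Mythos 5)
Your proof is correct and takes essentially the same route as the paper: transport the count through the bijection $S_d$ of Proposition~\ref{modified-rank-sequence-unique}, observe that the hypothesis on $\sum i\,m_i$ guarantees goodness, and count words with the prescribed letter multiset by a multinomial coefficient. The paper states this more tersely, but your added justification of the orbit-to-entry dictionary (handling the omitted wrapping blocks via the identity $q\sum s_i = \sum_B \rank(B)$ over non-central $B$) is exactly the observation the paper records just before the corollary.
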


The Fuss-Catalan case $b \equiv 1$ (mod $a$) of 
Corollary~\ref{symmetric-kreweras-count} is equivalent to a result of Athanasiadis
\cite[Theorem 2.3]{Ath}. 

We can also consider counting partitions in $\NC_d(a,b)$ with a fixed number of non-central block orbits 
(of any rank).
By Proposition~\ref{modified-rank-sequence-unique}, this is equivalent to counting sequences 
$(s_1, \dots, s_d)$ of nonnegative integers with bounded sum and a fixed number of nonzero entries.

\begin{corollary}
\label{symmetric-narayana-count}
Let $a < b$ be coprime positive integers and let $d | (b-1)$ be a divisor with $1 \leq d < b-1$.
Let $p$ be a nonnegative integer with $\frac{b-1}{d} p \leq a$.

The number of $a,b$-noncrossing partitions which are invariant under $\rot^d$, have a central block,  
and have $p$
orbits of non-central blocks under the action of $\langle \rot^d \rangle$ is
\begin{equation*}
{d \choose p} {  \lfloor \frac{ad}{b-1}  \rfloor - 1 \choose p}.
\end{equation*}

The number of $a,b$-noncrossing partitions which are invariant under $\rot^d$, do not have a central block,
and have $p$ orbits of non-central blocks under the action of $\langle \rot^d \rangle$ is 
\begin{equation*}
\begin{cases}
{d \choose p} {   \frac{ad}{b-1} - 1 \choose p - 1} & \text{if $\frac{b-1}{d} \mid a$,} \\
0 & \text{if $\frac{b-1}{d} \nmid a$.}
\end{cases}
\end{equation*}
\end{corollary}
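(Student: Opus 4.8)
The plan is to transport both counts across the bijection $S_d\colon \NC_d(a,b)\to\{\text{good sequences of length }d\}$ of Proposition~\ref{modified-rank-sequence-unique} and then to count good sequences by hand. First I would record the dictionary between a partition $\pi\in\NC_d(a,b)$ and its modified rank sequence $S_d(\pi)=(s_1,\dots,s_d)$: the number of $\langle\rot^d\rangle$-orbits of non-central blocks of $\pi$ equals the number of nonzero entries of $S_d(\pi)$, while $\pi$ has a central block if and only if $s_1+\cdots+s_d<\frac{ad}{b-1}$, with equality (which forces $\frac{b-1}{d}\mid a$) holding exactly when $\pi$ has no central block. Both facts are already available: the inequality is the one displayed just before Definition~\ref{def:noble-partition}, and the orbit count is the remark preceding Corollary~\ref{symmetric-kreweras-count}. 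Writing $T=s_1+\cdots+s_d$, the partitions counted in the first statement then correspond bijectively to length-$d$ sequences of nonnegative integers having exactly $p$ positive entries and $0\le T<\frac{ad}{b-1}$, and those counted in the second statement to such sequences with $T=\frac{ad}{b-1}$, an equation solvable only when $\frac{b-1}{d}\mid a$.

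Next I would carry out the enumeration. A length-$d$ nonnegative sequence with exactly $p$ positive entries is the datum of a choice of which $p$ of the $d$ coordinates are positive together with a composition of $T$ into $p$ positive parts; there are ${d\choose p}$ choices for the first and ${T-1\choose p-1}$ for the second, the latter read as $1$ when $p=T=0$. For the second statement there is a single admissible value $T=\frac{ad}{b-1}$, so no summation occurs and one obtains ${d\choose p}{\frac{ad}{b-1}-1\choose p-1}$ when $\frac{b-1}{d}\mid a$ and $0$ otherwise, which is the asserted formula. For the first statement one sums over $p\le T\le N$, where $N$ is the largest integer strictly below $\frac{ad}{b-1}$, and the hockey-stick identity $\sum_{T=p}^{N}{T-1\choose p-1}={N\choose p}$ collapses the sum to ${d\choose p}{N\choose p}$.

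The step I expect to be the main --- and essentially the only --- obstacle is reconciling $N$ with the binomial appearing in the statement, which forces a case split on the residue of $a$ modulo $\frac{b-1}{d}$. When $\frac{b-1}{d}\mid a$, the value $T=\frac{ad}{b-1}$ is attained only by the central-block-free partitions of the second statement, so for the first statement $N=\frac{ad}{b-1}-1=\lfloor\frac{ad}{b-1}\rfloor-1$; when $\frac{b-1}{d}\nmid a$, every $\rot^d$-fixed partition has a central block, $T$ ranges over all integers up to $\lfloor\frac{ad}{b-1}\rfloor$, and one must check that the statement's binomial is to be read accordingly. Once $N$ is pinned down, the residual points --- that $p=0$ with the empty composition is handled correctly, and that the hypothesis $\frac{b-1}{d}\,p\le a$ is exactly the condition for the relevant set of sequences to be nonempty --- are routine. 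In short, the whole proof is Proposition~\ref{modified-rank-sequence-unique} followed by a stars-and-bars computation, requiring no further input beyond what Sections~\ref{Characterizations} and \ref{Modified rank sequences} already provide.
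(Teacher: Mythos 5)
Your approach is the same one the paper takes: transport the count across $S_d$ (Proposition~\ref{modified-rank-sequence-unique}) and then count good sequences by stars and bars with a hockey-stick collapse. The dictionary you use (number of nonzero entries $=$ number of non-central block orbits; central block iff $s_1+\cdots+s_d<\frac{a}{q}$) is exactly the one the paper establishes before Definition~\ref{def:noble-partition} and before Corollary~\ref{symmetric-kreweras-count}, and your stars-and-bars count and hockey-stick step are correct.

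The cautious hedge in your last paragraph --- ``one must check that the statement's binomial is to be read accordingly'' --- should be an outright flag, because what you found there is a genuine off-by-one in the printed corollary. When $\frac{b-1}{d}\nmid a$, the central-block condition $T<\frac{ad}{b-1}$ is the same as $T\le\lfloor\frac{ad}{b-1}\rfloor$, so the hockey-stick sum runs up to $N=\lfloor\frac{ad}{b-1}\rfloor$ and yields $\binom{d}{p}\binom{\lfloor ad/(b-1)\rfloor}{p}$, not $\binom{d}{p}\binom{\lfloor ad/(b-1)\rfloor-1}{p}$ as stated. A small check confirms this: for $(a,b)=(3,5)$ and $d=2$, both $\{\{1,3\},\{2\},\{4\}\}$ and $\{\{1\},\{2,4\},\{3\}\}$ lie in $\NC_2(3,5)$, have a central block, and have $p=1$, while the corollary's formula gives $\binom{2}{1}\binom{0}{1}=0$. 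The uniform correct expression is $\binom{d}{p}\binom{\lceil ad/(b-1)\rceil-1}{p}$, which reduces to the printed formula exactly when $\frac{b-1}{d}\mid a$. The paper's own one-sentence proof (``assign positive values $\dots$ in such a way that their sum is $<\lfloor\frac{ad}{b-1}\rfloor$'') has the same slip, since the correct strict bound is $<\frac{ad}{b-1}$, not $<\lfloor\frac{ad}{b-1}\rfloor$. In short, your derivation is right and the method matches the paper's; you should simply commit to the conclusion that the first displayed binomial in the corollary needs a ceiling rather than a floor.
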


\begin{proof}
For the first part,
we choose $p$ entries in the sequence $(s_1, \dots, s_d)$ to be nonzero.  Then,
we assign positive values to these $p$ entries in such a way that their sum is
$< \left \lfloor \frac{ad}{b-1} \right \rfloor$.  The second part is similar.
\end{proof}

Finally, we can consider the problem of counting $\NC_d(a,b)$ itself.  By 
Proposition~\ref{modified-rank-sequence-unique}, this corresponds to counting  
sequences $(s_1, \dots, s_d)$ of nonnegative integers which satisfy
$ s_1 + \cdots + s_d \leq \left \lfloor \frac{ad}{b-1} \right \rfloor$.

\begin{corollary}
\label{symmetric-catalan-count}
Let $a < b$ be coprime positive integers and let $d | (b-1)$ be a divisor with $1 \leq d < b-1$.

The number of $a,b$-noncrossing partitions which are invariant under $\rot^d$ is
\begin{equation*}
{  \lfloor \frac{ad}{b-1}  \rfloor + d \choose d}.
\end{equation*}
\end{corollary}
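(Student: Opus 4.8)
The plan is to deduce this immediately from the bijection established in Proposition~\ref{modified-rank-sequence-unique}. That proposition identifies $\NC_d(a,b)$ with the set of \emph{good} length-$d$ sequences $(s_1, \dots, s_d)$ of nonnegative integers, namely those satisfying $s_1 + \cdots + s_d \leq \frac{a}{q}$ where $q = \frac{b-1}{d}$. So the entire problem reduces to a lattice-point count, and the hard work (showing $S_d$ is a bijection) has already been done.

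First I would observe that since $s_1, \dots, s_d$ are nonnegative integers, their sum is an integer, so the constraint $s_1 + \cdots + s_d \leq \frac{a}{q} = \frac{ad}{b-1}$ is equivalent to $s_1 + \cdots + s_d \leq \left\lfloor \frac{ad}{b-1} \right\rfloor$. Write $N := \left\lfloor \frac{ad}{b-1} \right\rfloor$. Thus $|\NC_d(a,b)|$ equals the number of $d$-tuples $(s_1, \dots, s_d) \in \ZZ_{\geq 0}^d$ with $s_1 + \cdots + s_d \leq N$.

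Next I would count these tuples by the standard ``stars and bars'' argument: introduce a slack variable $s_{d+1} := N - (s_1 + \cdots + s_d) \geq 0$, which sets up a bijection between our tuples and the solutions in nonnegative integers of $s_1 + \cdots + s_d + s_{d+1} = N$. The number of such solutions is the multinomial/binomial coefficient $\binom{N + d}{d}$. Substituting $N = \left\lfloor \frac{ad}{b-1} \right\rfloor$ gives the claimed formula $\binom{\lfloor \frac{ad}{b-1} \rfloor + d}{d}$.

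I do not expect any genuine obstacle here; the only point requiring a moment's care is the passage from the rational bound $\frac{a}{q}$ to the integer bound $\left\lfloor \frac{ad}{b-1} \right\rfloor$, which is justified by integrality of the entries as noted above. Everything else is the elementary composition-counting identity. (One could alternatively phrase the same count via Corollary~\ref{symmetric-narayana-count}, summing $\binom{d}{p}\binom{N-1}{p} + \binom{d}{p}\binom{N-1}{p-1}$ over $p$ using the Vandermonde identity when $q \mid a$, and $\binom{d}{p}\binom{N-1}{p}$ otherwise, but the direct stars-and-bars argument is cleaner and handles both cases uniformly.)
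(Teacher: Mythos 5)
Your proposal is correct and follows the paper's own route exactly: the paper also deduces the count from Proposition~\ref{modified-rank-sequence-unique} by reducing to counting good sequences $(s_1,\dots,s_d)$ with $s_1+\cdots+s_d \leq \lfloor \frac{ad}{b-1} \rfloor$, and the stars-and-bars step you spell out is the same elementary count left implicit in the paper.
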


The Fuss-Catalan cases $b \equiv 1$ (mod $a$) of Corollaries~\ref{symmetric-narayana-count}  and
\ref{symmetric-catalan-count} are results
of Reiner \cite[Propositions 6 and 7]{Reiner}.

\section{Cyclic sieving}
\label{Cyclic sieving}

Let $X$ be a finite set, let $C = \langle c \rangle$ be a finite cyclic group acting on $X$, let 
$X(q) \in \mathbb{N}[q]$ be a polynomial with nonnegative integer coefficients, and let 
$\zeta \in \CC$ be a root of unity with multiplicative order $|C|$.
The triple $(X, C, X(q))$ {\em exhibits the cyclic sieving phenomenon (CSP)} if, for all $d \geq 0$, 
we have 
$X(\zeta^d) = |X^{c^d}| = | \{x \in X \,:\, c^d.x = x\} |$ (see \cite{RSWCSP}).  In this section we prove cyclic sieving
results for the action of rotation on $\NC(a,b)$.

Our proofs will be `brute force' and use direct root-of-unity evaluations of $q$-analogs.  We will
make frequent use of the following fact:
If $x \equiv y$ (mod $z$), then
\begin{equation*}
\lim_{q \rightarrow e^{2 \pi i/z}} \frac{[x]_q}{[y]_q} = 
\begin{cases}
\frac{x}{y} & \text{if $y \equiv 0$ mod $z$,} \\
0 & \text{otherwise.}
\end{cases}
\end{equation*}
From this, we get the useful fact that
\begin{equation*}
\lim_{q \rightarrow e^{2 \pi i/z}} \frac{[nz]_q!}{[kz]_q!} = {n \choose k} [nz - kz]_q! |_{q = e^{2 \pi i/z}}.
\end{equation*}

\begin{theorem}
\label{kreweras-csp}
Let $a < b$ be coprime and let ${\bf r} = (r_1, r_2, \dots, r_a)$ be a sequence of nonnegative integers satisfying 
$\sum_{i = 1}^a i r_i = a$.  
Set $k := \sum_{i = 1}^a r_i$.
Let $X$ be the set of $a,b$-noncrossing partitions of $[b-1]$ with $r_1$ blocks of rank $1$,
$r_2$ blocks of rank $2, \dots,$ and $r_a$ blocks of rank $a$.  

The triple $(X, C, X(q))$ exhibits the cyclic sieving phenomenon, where $C = \ZZ_{b-1}$ acts on $X$ by rotation and
\begin{equation}
X(q) = \Krew_q(a,b, {\bf r}) = \frac{[b-1]!_q}{[r_1]!_q [r_2]!_q \cdots [r_a]!_q [b-k]!_q}
\end{equation}
is the {\em $q$-rational Kreweras number}.
\end{theorem}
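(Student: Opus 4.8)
The plan is to prove the cyclic sieving phenomenon by the standard ``brute force'' method: verify directly that $X(\zeta^e) = |X^{\rot^e}|$ for each $e$, where $\zeta = e^{2\pi i/(b-1)}$. Since $|X^{\rot^e}|$ depends only on $d := \gcd(e, b-1)$, and this $d$ satisfies $1 \le d \le b-1$, I split into the two cases $d = b-1$ (where $\rot^e$ acts trivially, so $|X^{\rot^e}| = |X| = \Krew(a,b,{\bf r})$, matching $X(1)$ by Proposition~\ref{nc-basic-facts}) and $1 \le d < b-1$ (the main case).

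For the case $1 \le d < b-1$, the left-hand side is the root-of-unity evaluation $X(\zeta^e) = \Krew_q(a,b,{\bf r})\big|_{q = e^{2\pi i/q'}}$ where $q' := \frac{b-1}{d}$. I would compute this limit using the two quoted facts about $\lim_{q \to e^{2\pi i/z}} [nz]_q!/[kz]_q!$ and the vanishing/surviving dichotomy for ratios $[x]_q/[y]_q$. Writing $b - 1 = q' d$ and $b - k = (b-1) - k + 1$, I would track which of the factorials $[b-1]!_q$, $[r_i]!_q$, $[b-k]!_q$ contribute surviving factors versus zeros; the evaluation is nonzero precisely when the denominator's ``incomplete blocks of size $q'$'' cancel against the numerator's, which forces divisibility conditions on the $r_i$ and on $b-k$ modulo $q'$. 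The surviving value should collapse to a product of ordinary multinomial coefficients indexed by $d$ and the quotients $r_i/q'$ (or $\lceil r_i/q' \rceil$-type quantities), together with a residual factorial evaluated at a primitive root which must itself be checked to be $\pm 1$ or to cancel.

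The right-hand side $|X^{\rot^d}|$ is exactly the count supplied by Corollary~\ref{symmetric-kreweras-count}: a partition in $X$ fixed by $\rot^d$ has its non-central blocks partitioned into free $\langle \rot^d \rangle$-orbits of size $q'$, so an orbit of rank-$i$ blocks consumes $q'$ of the $r_i$ blocks of rank $i$; if $q' \nmid r_i$ for some $i$ then $X^{\rot^d} = \emptyset$, and otherwise setting $m_i := r_i/q'$ (and checking that the leftover ranks accounted for by a possible central block are consistent with $\sum i r_i = a$), Corollary~\ref{symmetric-kreweras-count} gives $|X^{\rot^d}| = \binom{d}{m_1, \dots, m_a, d - m}$ with $m = \sum m_i$. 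So the entire proof reduces to the identity that the root-of-unity evaluation of $\Krew_q(a,b,{\bf r})$ equals this multinomial coefficient (or $0$ in the non-divisible case), with the same case distinction on both sides.

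The main obstacle I expect is the bookkeeping in the root-of-unity evaluation: carefully handling the ``incomplete'' tail factorials $[r_i]!_q$ and $[b-k]!_q$ when $r_i$ or $b-k$ is not divisible by $q'$, showing the zeros in the denominator are exactly matched by zeros in the numerator (and no more), and confirming that the leftover factorial factors evaluated at $e^{2\pi i/q'}$ cancel cleanly to leave an honest multinomial coefficient rather than some spurious root of unity. A secondary subtlety is reconciling the indexing: Corollary~\ref{symmetric-kreweras-count} counts orbits of \emph{non-central} blocks, so I need the translation between ``$r_i$ blocks of rank $i$ total in $\pi$'' and ``$m_i$ orbits plus possible central block,'' including verifying that a central block, when present, contributes the unique way consistent with the rank-sum constraint; I would state this reconciliation as a short lemma before invoking Corollary~\ref{symmetric-kreweras-count}.
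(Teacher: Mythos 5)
Your approach is exactly the paper's: verify the CSP by a brute-force root-of-unity evaluation of $\Krew_q(a,b,{\bf r})$ and match the result against the count from Corollary~\ref{symmetric-kreweras-count}. However, your stated divisibility criterion in the middle of the argument is wrong, and the error sits precisely at the spot you flag as a ``secondary subtlety'' in your last paragraph. You claim that if $q' \nmid r_i$ for some $i$ then $X^{\rot^d} = \emptyset$, and correspondingly that the surviving root-of-unity value forces $q' \mid r_i$ for all $i$ with $m_i = r_i/q'$. Neither is true. A partition in $X^{\rot^d}$ may have a central block; if it does, the central block has some rank $i_0$, and then $r_{i_0} = q' m_{i_0} + 1$, i.e.\ $r_{i_0} \equiv 1 \pmod{q'}$, which is \emph{not} divisible by $q'$ once $q' > 1$. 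In fact, whenever $q' \nmid a$ a central block is forced (by the rank-sum constraint together with Proposition~\ref{rank-sequence-characterization}), so the exceptional-index situation is the generic one, not a degenerate edge case. The correct criterion, which the paper states and which the $q$-evaluation must reproduce, is: $X(\zeta^d) = 0$ unless $q' \mid r_i$ for all but at most one index $i_0$, and that exceptional $r_{i_0}$ satisfies $r_{i_0} \equiv 1 \pmod{q'}$; one then sets $m_i = \lfloor r_i / q' \rfloor$, not $r_i / q'$. Your proposed ``short lemma'' reconciling total blocks of rank $i$ against orbits-plus-central-block is indeed what is needed, but as written it would contradict the $q' \mid r_i$ claim you made two sentences earlier. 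Correcting the criterion (and tracking the exceptional $r_{i_0}$ through the $q$-factorial cancellation) is the actual content of the root-of-unity computation; with that fix the rest of the proposal goes through as you describe.
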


\begin{proof}
Reiner and Sommers proved that
the $q$-Kreweras number $\Krew_q(a,b, {\bf r})$ is polynomial in $q$ with
nonnegative integer coefficients using algebraic techniques \cite{ReinerSommers}.
No combinatorial proof of the polynomiality or the positivity of 
$\Krew_q(a,b, {\bf r})$ is known.

Let $\zeta = e^{\frac{2 \pi i}{b-1}}$ and let $d | (b-1)$ with $1 \leq d < b-1$.  Write $t = \frac{b-1}{d}$.  
We have that 
$X(\zeta^d) = 0$ unless at $t | r_i$ for all but at most one $1 \leq i \leq a$, and that $r_{i_0} \equiv 1$ (mod $t$)
if $t \nmid r_{i_0}$.  If the sequence ${\bf r}$ satisfies the condition of the last sentence, define a new 
sequence $(m_1, \dots, m_a)$ by $m_i = \left \lfloor \frac{r_i}{t} \right \rfloor$ for $1 \leq i \leq a$.   Let 
$m = m_1 + \cdots + m_a$.
Write $r_{i_0} = c_{i_0} t + s_{i_0}$ for $s_{i_0} \in \{0, 1\}$ and assume $t | r_i$ for all $i \neq i_0$.  We have

\begin{align*}
\lim_{q \rightarrow \zeta^d} X(q) &=
{d \choose m_1} {d - m_1 \choose m_2} \cdots {d - (m - m_a) \choose m_a}
\lim_{q \rightarrow \zeta^d} \frac{[b-1-mt]!_q [r_{i_0} - s_{i_0}]!_q}{[r_{i_0}]!_q [b-k]!_q} \\
&= {d \choose m_1, \dots, m_a, d-m} 
\lim_{q \rightarrow \zeta^d} \frac{[b-1-(k - s_{i_0})]!_q [r_{i_0} - s_{i_0}]!_q}{[r_{i_0}]!_q [b-k]!_q}b \\
&=
\begin{cases}
 {d \choose m_1, \dots, m_a, d-m}  \lim_{q \rightarrow \zeta^d} \frac{1}{[b-k]_q} & \text{$s_{i_0} = 0$} \\
 {d \choose m_1, \dots, m_a, d-m}   \lim_{q \rightarrow \zeta^d} \frac{1}{[r_{i_0}]_q} & \text{$s_{i_0} = 1$}
\end{cases} \\
&= 
 {d \choose m_1, \dots, m_a, d-m}.
\end{align*}

By Corollary~\ref{symmetric-kreweras-count}, we have $X(\zeta^d) = | X^{\rot^d} |$.
\end{proof}

The following Narayana version of Theorem~\ref{kreweras-csp} 
proves a CSP involving the action of rotation on $a,b$-noncrossing partitions
with a fixed number of blocks.

\begin{theorem}
\label{narayana-csp}
Let $a < b$ be coprime, let $1 \leq k \leq a$, and let $X$ be the set of $(a,b)$-noncrossing partitions of 
$[b-1]$ with $k$ blocks.

The triple $(X, C, X(q))$ exhibits the cyclic sieving phenomenon, where $C = \ZZ_{b-1}$ acts on $X$ by rotation and
\begin{equation}
X(q) = \Nar_q(a,b,k) = \frac{1}{[a]_q} {a \brack k}_q {b-1 \brack k-1}_q
\end{equation}
is the {\em $q$-rational Narayana number}.
\end{theorem}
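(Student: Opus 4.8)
The plan is to imitate the brute-force approach used in the proof of Theorem~\ref{kreweras-csp}, specializing a root of unity in the $q$-Narayana number and matching the result against the fixed-point count supplied by Corollary~\ref{symmetric-narayana-count}. First I would record that $\Nar_q(a,b,k) = \frac{1}{[a]_q}{a \brack k}_q {b-1 \brack k-1}_q$ lies in $\NN[q]$: this follows from Theorem~\ref{kreweras-csp}, since summing the $q$-Kreweras numbers $\Krew_q(a,b,\mathbf{r})$ over all $\mathbf{r}=(r_1,\dots,r_a)$ with $\sum i r_i = a$ and $\sum r_i = k$ yields $\Nar_q(a,b,k)$ (this is the $q$-analog of the ordinary identity, and it also follows directly from $q$-analogs of the cycle-lemma count of Dyck paths with $k$ vertical runs). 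In fact the cleanest route is: $X$ is the disjoint union over such $\mathbf{r}$ of the sets $X_{\mathbf r}$ of Theorem~\ref{kreweras-csp}, rotation respects this decomposition, and $X(q) = \sum_{\mathbf r} \Krew_q(a,b,\mathbf r)$, so the CSP for $X$ follows by summing the CSP instances of Theorem~\ref{kreweras-csp} — \emph{provided} $\sum_{\mathbf r}\Krew_q(a,b,\mathbf r)$ really equals $\Nar_q(a,b,k)$ as a polynomial.

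So the heart of the matter is the polynomial identity $\Nar_q(a,b,k) = \sum_{\mathbf r}\Krew_q(a,b,\mathbf r)$, the sum over compositions $\mathbf r$ of $a$ into $k$ parts recorded multiplicity-wise. If one prefers to avoid establishing that identity, the alternative is a direct evaluation: fix $\zeta = e^{2\pi i/(b-1)}$ and $d \mid (b-1)$ with $1 \le d < b-1$, set $t = \frac{b-1}{d}$, and compute $\lim_{q\to\zeta^d}\Nar_q(a,b,k)$ using the two limit facts quoted before Theorem~\ref{kreweras-csp}. Writing ${b-1 \brack k-1}_q = \frac{[b-1]!_q}{[k-1]!_q[b-k]!_q}$ and ${a \brack k}_q = \frac{[a]!_q}{[k]!_q[a-k]!_q}$, the factor $\frac{1}{[a]_q}{a\brack k}_q = \frac{1}{[k]_q}{a-1 \brack k-1}_q$ simplifies things; then $b-1 = td$ is divisible by $t$, so $\lim_{q\to\zeta^d}\frac{[td]!_q}{[\ell]!_q}$ collapses to a binomial coefficient in $d$ whenever $t \mid \ell$, and vanishes otherwise. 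Chasing the divisibility of $k-1$, $b-k$, $a$, $a-k$, and $k$ by $t$ should produce exactly the two-case answer ${d \choose p}{\lfloor ad/(b-1)\rfloor - 1 \choose p}$ or ${d \choose p}{ad/(b-1)-1 \choose p-1}$ (with $p=k$ or $p=k-1$ depending on whether a central block is forced), which is precisely the sum of the two counts in Corollary~\ref{symmetric-narayana-count}. One must also separately check the case $d = b-1$ (the identity element), where $\Nar_q(a,b,k)|_{q=1} = \Nar(a,b,k) = |X|$ by Proposition~\ref{nc-basic-facts}(1).

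The main obstacle will be the bookkeeping in the root-of-unity evaluation: tracking which of the several factorials in numerator and denominator survive the limit, and reassembling the surviving binomial coefficients into the single (or two-case) expression of Corollary~\ref{symmetric-narayana-count}, while being careful that the floor $\lfloor ad/(b-1)\rfloor$ arises correctly from $\lfloor a/t\rfloor$ and that the dichotomy "$t \mid a$ vs.\ $t \nmid a$" in Corollary~\ref{symmetric-narayana-count} matches the dichotomy in the limit. If instead one argues via the Kreweras decomposition, the obstacle shifts to proving $\sum_{\mathbf r}\Krew_q(a,b,\mathbf r) = \Nar_q(a,b,k)$ as polynomials; this can be done by a $q$-analog of the standard generating-function argument, or — most economically — by noting both sides are polynomials agreeing at all $(b-1)$-th roots of unity (via Theorem~\ref{kreweras-csp} summed against Corollary~\ref{symmetric-kreweras-count}, versus the direct evaluation above against Corollary~\ref{symmetric-narayana-count}) together with agreement of degrees and leading behavior, forcing equality. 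Either way, the structural input — polynomiality, the rotation action, and the fixed-point counts — is already in hand; only the explicit $q$-arithmetic remains.
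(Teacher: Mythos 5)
Your direct root-of-unity evaluation is indeed what the paper does: cite Reiner--Sommers for $\Nar_q(a,b,k)\in\NN[q]$, then check that $\lim_{q\to\zeta^d}\Nar_q(a,b,k)$ matches the two-case count of Corollary~\ref{symmetric-narayana-count}. That route is sound, and the paper leaves exactly the same $q$-arithmetic bookkeeping to the reader.

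However, the alternative route you call the ``cleanest'' --- and which you also lean on to establish polynomiality and nonnegativity without Reiner--Sommers --- has a genuine gap. The identity
\begin{equation*}
\sum_{\mathbf r}\Krew_q(a,b,\mathbf r) \;=\; \Nar_q(a,b,k)
\end{equation*}
(sum over $\mathbf r$ with $\sum i r_i=a$, $\sum r_i=k$) is \emph{false} as a polynomial identity in $q$, even though it is true at $q=1$. A minimal example: for $(a,b,k)=(4,5,2)$ one gets
$\Krew_q(4,5,(1,0,1,0))+\Krew_q(4,5,(0,2,0,0)) = (1+q+q^2+q^3)+(1+q^2) = 2+q+2q^2+q^3$, while
$\Nar_q(4,5,2)={4\brack 2}_q = 1+q+2q^2+q^3+q^4$. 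These two polynomials do agree at $q=1$ and at every primitive and imprimitive $4$-th root of unity --- which also shows why the ``agreement at all $(b-1)$-th roots of unity plus leading behavior forces equality'' argument in your last paragraph cannot work: agreement at $(b-1)$-th roots of unity only determines a polynomial modulo $q^{b-1}-1$, and these $q$-analogs typically have degree much larger than $b-1$. So the decomposition-and-sum route does not by itself prove the theorem as stated (with $X(q)=\Nar_q(a,b,k)$), nor does it give polynomiality of $\Nar_q$. You really do need the Reiner--Sommers input (or some other argument) for the $\NN[q]$ statement, and then the direct evaluation against Corollary~\ref{symmetric-narayana-count} carries the proof, as in the paper.
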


\begin{proof}
Reiner and Sommers proved that the $q$-Narayana numbers 
$\Nar_q(a,b,k)$ are polynomials in $q$ with nonnegative integer coefficients  \cite{ReinerSommers}.
As in the Kreweras case, no combinatorial proof of this fact is known.

Let $\zeta = e^{\frac{2 \pi i}{b-1}}$ and let $d | (b-1)$ with $1 \leq d < b-1$.  
Let $q = \frac{b-1}{d}$.
By 
Corollary~\ref{symmetric-narayana-count}, it is enough to show that 
\begin{equation*}
X(\zeta^d) = 
\begin{cases}
{d \choose \lfloor \frac{k}{d} \rfloor} {\lfloor \frac{ad}{b-1} \rfloor - 1 \choose  \lfloor \frac{k}{q} \rfloor - 1}
& \text{if $k \equiv 0$ (mod $q$)}, \\
{d \choose \lfloor \frac{k}{q} \rfloor} {\lfloor \frac{ad}{b-1} \rfloor \choose \lfloor \frac{k}{q} \rfloor} &
\text{if $k \equiv 1$ (mod $q$),} \\
0 & \text{otherwise.}
\end{cases}
\end{equation*}
The argument here is similar to that in the proof of Theorem~\ref{kreweras-csp} and is left to the 
reader.
\end{proof}

The next CSP was asked for in \cite[Subsection 6.2]{ARW}.

\begin{theorem}
\label{catalan-csp}
Let $a < b$ be coprime and let $X$ be the set of $(a,b)$-noncrossing partitions of $[b-1]$.  

The triple $(X, C, X(q))$ exhibits the cyclic sieving phenomenon, where $C = \ZZ_{b-1}$ acts on $X$ by rotation and
\begin{equation}
X(q) = \Cat_q(a,b) = \frac{1}{[a+b]_q}{a + b \brack a, b}_q
\end{equation}
is the $q$-rational Catalan number.
\end{theorem}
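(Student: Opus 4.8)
The plan is to mimic the root-of-unity verification strategy used for Theorems~\ref{kreweras-csp} and \ref{narayana-csp}, now applied to the full $q$-rational Catalan number $\Cat_q(a,b)$. First I would record that $\Cat_q(a,b) \in \NN[q]$: this is the $q$-analog of the rational Catalan number and its polynomiality with nonnegative integer coefficients is due to Armstrong--Rhoades--Williams / Reiner--Sommers, so it qualifies as a legitimate $X(q)$ for a CSP. Next, fix $\zeta = e^{2\pi i/(b-1)}$ and a divisor $d \mid (b-1)$ with $1 \le d < b-1$; set $q := \frac{b-1}{d}$ in the notation of Section~\ref{Modified rank sequences}. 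The goal reduces, by Corollary~\ref{symmetric-catalan-count}, to proving the single identity
\begin{equation*}
\Cat_q(a,b)\Big|_{q = \zeta^d} = \binom{\lfloor \frac{ad}{b-1} \rfloor + d}{d},
\end{equation*}
since $|X^{\rot^d}| = |\NC_d(a,b)|$ is exactly this binomial coefficient, and the cases $d = 0$ (giving $|X| = \Cat(a,b)$, immediate from Proposition~\ref{nc-basic-facts}) and $d = b-1$ (giving the whole set, trivial) are handled separately.

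The computation itself is a careful limit. Writing $\Cat_q(a,b) = \frac{[a+b-1]!_q}{[a]!_q\,[b]!_q}$ (using $[a+b]_q \Cat_q(a,b) = \frac{[a+b]!_q}{[a]!_q[b]!_q}$ and cancelling $[a+b]_q$, legitimate since $\gcd(a+b, b-1)$ considerations show the relevant factors behave), I would group the factorials in the numerator and denominators according to residues modulo $d$, exactly as in the proof of Theorem~\ref{kreweras-csp}. The key tool is the stated limit fact: $\lim_{q \to e^{2\pi i/z}} \frac{[nz]!_q}{[kz]!_q} = \binom{n}{k}[nz-kz]!_q\big|_{q = e^{2\pi i/z}}$, together with $\lim \frac{[x]_q}{[y]_q} = \frac{x}{y}$ when $z \mid x, y$ and $0$ when $z \nmid x$ but $z \mid y$. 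Since $b - 1 \equiv 0$, $a$ and $b$ have well-defined residues mod $d$, and only the largest-block-of-full-multiples part of each factorial survives in the limit, producing a single multinomial (really binomial) coefficient with arguments $\lfloor a/d \rfloor$-type quantities; after simplification using $d \mid (b-1)$ and $\lfloor \frac{ad}{b-1}\rfloor = \lfloor \frac{a}{q} \rfloor$, this collapses to $\binom{\lfloor ad/(b-1)\rfloor + d}{d}$. One must track an offset term coming from the residue of $a+b-1$ versus $a$ versus $b$ mod $d$; since $a + b - 1 \equiv a + b - 1$ and $b \equiv b$, the numerator picks up roughly $\lfloor (a+b-1)/d \rfloor = \lfloor a/d \rfloor + (b-1)/d + (\text{correction})$ full blocks, and the bookkeeping of these corrections is where care is needed.

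I expect the main obstacle to be precisely this residue bookkeeping: unlike the Kreweras and Narayana cases, where the block-rank data $\mathbf{r}$ or the block count $k$ already forces most of the parameters into convenient residue classes, here all three of $a$, $b$, $a+b-1$ contribute surviving factorial pieces simultaneously, and one must verify that the leftover non-full-block factors in numerator and denominator cancel to leave exactly $1$ (times the binomial), with no stray root-of-unity factor. A clean way to organize this is to first prove the identity for $d = b - 1$ and $d = 1$ as sanity checks, then handle general $d$ by the residue grouping; alternatively one can deduce Theorem~\ref{catalan-csp} from Theorem~\ref{narayana-csp} by summing over $k$, using $\Cat_q(a,b) = \sum_k \Nar_q(a,b,k)$ and the already-established $\Nar_q(a,b,k)|_{q=\zeta^d}$, then checking that $\sum_k |\NC_d(a,b) \text{ with } k \text{ blocks}| = |\NC_d(a,b)|$ matches via a Vandermonde-type identity on the binomials in Corollary~\ref{symmetric-narayana-count}. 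I would pursue the direct evaluation as the primary route and mention the summation route as an alternative, since the latter trades the root-of-unity computation for a (routine) binomial identity but depends on having Theorem~\ref{narayana-csp} in hand.
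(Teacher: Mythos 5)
Your proposal matches the paper's proof exactly: the paper invokes Corollary~\ref{symmetric-catalan-count} to reduce to the single identity $\Cat_q(a,b)|_{q=\zeta^d} = \binom{\lfloor ad/(b-1)\rfloor + d}{d}$, and then states that the root-of-unity evaluation is ``similar to that in the proof of Theorem~\ref{kreweras-csp} and is left to the reader,'' which is precisely the residue-grouping computation you outline. Your alternative route via summing the Narayana evaluations of Theorem~\ref{narayana-csp} is not used by the paper for this theorem (though a closely related reduction to the Narayana case does appear in the proof of Theorem~\ref{homogeneous-catalan-csp}), but the primary route you propose is the one taken.
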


\begin{proof}

Let $\zeta = e^{\frac{2 \pi i}{b-1}}$ and let $d | (b-1)$ with $1 \leq d < b-1$.  
By Corollary~\ref{symmetric-catalan-count} it is enough to show that
\begin{equation*}
X(q) = { \lfloor \frac{ad}{b-1} \rfloor + d \choose d}.
\end{equation*}
The argument here is similar to that in the proof of Theorem~\ref{kreweras-csp}
and is left to the reader.
\end{proof}

Our final CSP proves 
\cite[Conjecture 5.3]{ARW}.

\begin{theorem}
\label{homogeneous-catalan-csp}
Let $(a, b)$ be coprime and let $X$ be the set of homogeneous $(a,b)$-noncrossing partitions of $[a+b-1]$.

The triple $(X, C, X(q))$ exhibits the cyclic sieving phenomenon, where $C = \ZZ_{a+b-1}$ acts on $X$ by rotation and
\begin{equation}
X(q) = \Cat_q(a,b) = \frac{1}{[a+b]_q}{a + b \brack a, b}_q
\end{equation}
is the $q$-rational Catalan number.
\end{theorem}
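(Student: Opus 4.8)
The plan is to transport the cyclic action to rational Dyck paths via the promotion operator of \cite{ARW}, read off the fixed‑point counts there, and match them against a direct root‑of‑unity evaluation of $\Cat_q(a,b)$, in close parallel with the proof of Theorem~\ref{kreweras-csp}. As in that proof, the polynomiality and positivity of $\Cat_q(a,b)$ required for the CSP are not obvious combinatorially, but follow from the Reiner--Sommers theorem together with $\Cat_q(a,b) = \sum_{k} \Nar_q(a,b,k)$.

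First I would recall from \cite{ARW} that there is a promotion operator $\mathsf{pro}$ on the set of $a,b$-Dyck paths which intertwines with the rotation operator on $\HNC(a,b)$ via the map $\overline{\pi}$. Combined with Proposition~\ref{hnc-basic-facts}, this makes $\overline{\pi}$ a $\ZZ_{a+b-1}$-equivariant bijection from $\{a,b\text{-Dyck paths}\}$ onto $\HNC(a,b)$, so that $|\HNC(a,b)^{\rot^d}| = |\{D : \mathsf{pro}^d(D) = D\}|$ for every $d$. Since $\zeta^d$ and the subgroup $\langle \rot^d\rangle$ depend only on $\gcd(d, a+b-1)$, it suffices to treat divisors $d \mid (a+b-1)$; the case $d = a+b-1$ being the trivial identity (where $X(1) = \Cat(a,b) = |X|$), I may assume $1 \le d < a+b-1$ and set $e := \frac{a+b-1}{d} \ge 2$ and $\zeta := e^{2\pi i/(a+b-1)}$, so that $\zeta^d$ has multiplicative order $e$.

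Next I would evaluate $\Cat_q(a,b) = \frac{[a+b-1]_q!}{[a]_q!\,[b]_q!}$ at $q = \zeta^d$ using the two limit identities recorded at the start of Section~\ref{Cyclic sieving}. Writing $\rho := a \bmod e$ and $\sigma := b \bmod e$, the congruence $a+b-1 \equiv 0 \pmod e$ forces $\rho + \sigma \in \{1, e+1\}$. If $\rho + \sigma = e+1$, then the numerator contributes one more vanishing factor (of the form $[me]_q$) than the denominator $[a]_q!\,[b]_q!$, and the evaluation is $0$; if $\{\rho,\sigma\} = \{0,1\}$, the vanishing factors cancel exactly and the evaluation is the binomial coefficient $\binom{d}{\lfloor a/e\rfloor}$.

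Finally --- and this is where the real work lies --- I would count the $\mathsf{pro}^d$-fixed $a,b$-Dyck paths, equivalently the $\rot^d$-fixed partitions $\pi \in \HNC(a,b)$, and check agreement. Such a $\pi$ has exactly $a$ blocks, on which $\ZZ_e = \langle\rot^d\rangle$ acts with orbits of size $e$ together with at most one fixed (central) block; this forces $e \mid a$ (then $\pi$ has no central block) or $e \mid b$ (then $\pi$ has exactly one central block), and in every other case there is no fixed partition at all --- exactly matching the vanishing above, since $\rho+\sigma = e+1$ is precisely the condition $e \nmid a$ and $e \nmid b$. In the surviving cases the fixed partition is reconstructed from its restriction to a fundamental domain of $d$ cyclically consecutive points, and a Cycle‑Lemma argument in the spirit of Lemma~\ref{noble-sequences-cycle}, applied now directly to $a,b$-Dyck paths rather than to rank sequences, should identify the number of admissible fundamental domains as $\binom{d}{\lfloor a/e\rfloor}$, matching the evaluation of the previous paragraph and completing the CSP. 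The main obstacle is this last step: making the promotion action on rational Dyck paths explicit enough to run the fixed‑point enumeration, which is the homogeneous counterpart of the analysis that Corollaries~\ref{symmetric-kreweras-count}--\ref{symmetric-catalan-count} carried out in the inhomogeneous setting.
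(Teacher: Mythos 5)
Your outline heads down a genuinely different (and much longer) road than the paper, and stalls precisely at the step you flag as the ``main obstacle.''  The root-of-unity evaluation is fine: with $e := \frac{a+b-1}{d}$, writing $\rho := a \bmod e$, $\sigma := b \bmod e$, the congruence $a+b-1 \equiv 0 \pmod e$ does force $\rho + \sigma \in \{1, e+1\}$, the evaluation vanishes exactly when $\rho + \sigma = e+1$, and equals $\binom{d}{\lfloor a/e\rfloor}$ otherwise; and your block-orbit analysis on the $\HNC(a,b)$ side correctly identifies $e\mid a$ or $e\mid b$ as the only two live cases.  But the fixed-point enumeration itself --- the claim that a Cycle-Lemma argument on promotion-fixed Dyck paths yields $\binom{d}{\lfloor a/e\rfloor}$ --- is only asserted, not carried out.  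This is not a small gap: it would amount to redoing, in the homogeneous setting, the entire modified-rank-sequence machinery of Section~\ref{Modified rank sequences}, and the promotion operator on Dyck paths is considerably less transparent to manipulate than the operator $\rot'$.

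The paper avoids all of this with a one-line reduction.  Replacing each north step of an $a,b$-Dyck path $D$ by the pair $NE$ gives an $a,a+b$-Dyck path $D'$ with exactly $a$ vertical runs, and one checks directly from the laser constructions that $\overline{\pi}(D) = \pi(D')$.  This exhibits $\HNC(a,b)$ as precisely the set of ordinary $a,a+b$-noncrossing partitions of $[a+b-1]$ with exactly $a$ blocks, identically as $\ZZ_{a+b-1}$-sets under rotation.  Since $\Cat_q(a,b) = \frac{[a+b-1]_q!}{[a]_q!\,[b]_q!} = \frac{1}{[a]_q}{a+b-1\brack a-1}_q = \Nar_q(a,a+b;a)$, the statement is then literally Theorem~\ref{narayana-csp} specialized to $(a,b)\mapsto(a,a+b)$ and $k=a$.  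In other words, instead of re-proving a homogeneous analogue of the Section~\ref{Modified rank sequences} enumeration, the paper shows the homogeneous objects already \emph{are} inhomogeneous objects (for a different slope) and lets the inhomogeneous CSP do all the work.  If you want to keep your direct approach, the missing content is exactly the fixed-point count; but the reduction is the far more economical move and is the one you should look for first whenever a ``homogeneous'' statement shadows an ``inhomogeneous'' one.
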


\begin{proof}
Let $D$ be an $a,b$-Dyck path.  Construct an $a,a+b$-Dyck path $D'$ by replacing every north
step in $D$ with a pair $NE$.   Then $D'$ has $a$ vertical runs and $\overline{\pi}(D) = \pi(D')$.  
Moreover, the map $D \mapsto D'$ gives a bijection
\begin{equation*}
\{ \text{all $a,b$-Dyck paths} \} \longrightarrow \{ \text{all $a,a+b$-Dyck paths with $a$ vertical runs} \}.
\end{equation*}
It follows that the set $\HNC(a,b)$ of homogeneous $a,b$-noncrossing partitions of $[a+b-1]$ is precisely
the set of (ordinary) $a,a+b$-noncrossing partitions of $[a+b-1]$ with precisely $a$ blocks.  
The result follows from Theorem~\ref{narayana-csp}.
\end{proof}

\section{Parking functions}
\label{Parking functions}

\subsection{Noncrossing parking functions}
Let $W$ be an irreducible real reflection group with Coxeter number $h$.
Armstrong, Reiner, and Rhoades defined a $W \times \ZZ_h$-set $\Park^{NC}_W$
called the set of {\em $W$-noncrossing parking functions} \cite{ARR}.  Given a Fuss parameter $m \geq 1$,
a Fuss extension $\Park^{NC}_W(m)$ of $\Park^{NC}_W$ was defined in \cite{RhoadesFuss}.
An increasingly strong trio of conjectures (Weak, Intermediate, and Strong) was formulated about these objects
and it was shown that the weakest of these uniformly implies various facts from $W$-Catalan theory which 
are at present only understood in a case-by-case fashion.  

In this section, we give a rational extension of the constructions in \cite{ARR, RhoadesFuss} 
when $W = \symm_a$ is the symmetric group.
This gives evidence that $\NC(a,b)$ gives the `correct' definition of rational noncrossing partitions.
Extending the work of \cite{ARR, RhoadesFuss} to other reflection groups remains an open problem.

\begin{defn}
Let $a < b$ be coprime.  An {\em $a,b$-noncrossing parking function} is a pair
$(\pi, f)$ where $\pi \in \NC(a,b)$ is an $a,b$-noncrossing partition and 
$B \mapsto f(B)$ is a labeling of the blocks of $\pi$ with subsets of $[a]$ such that
\begin{itemize}
\item we have $[a] = \biguplus_{B \in \pi} f(B)$, and
\item  for all blocks $B \in \pi$ we have $|f(B)| = \rank^{\pi}_{a,b}(B)$.
\end{itemize}
We denote by $\Park^{NC}(a,b)$ the set of all $a,b$-noncrossing parking functions.
\end{defn}

\begin{figure}
\centering
\includegraphics[scale = 0.5]{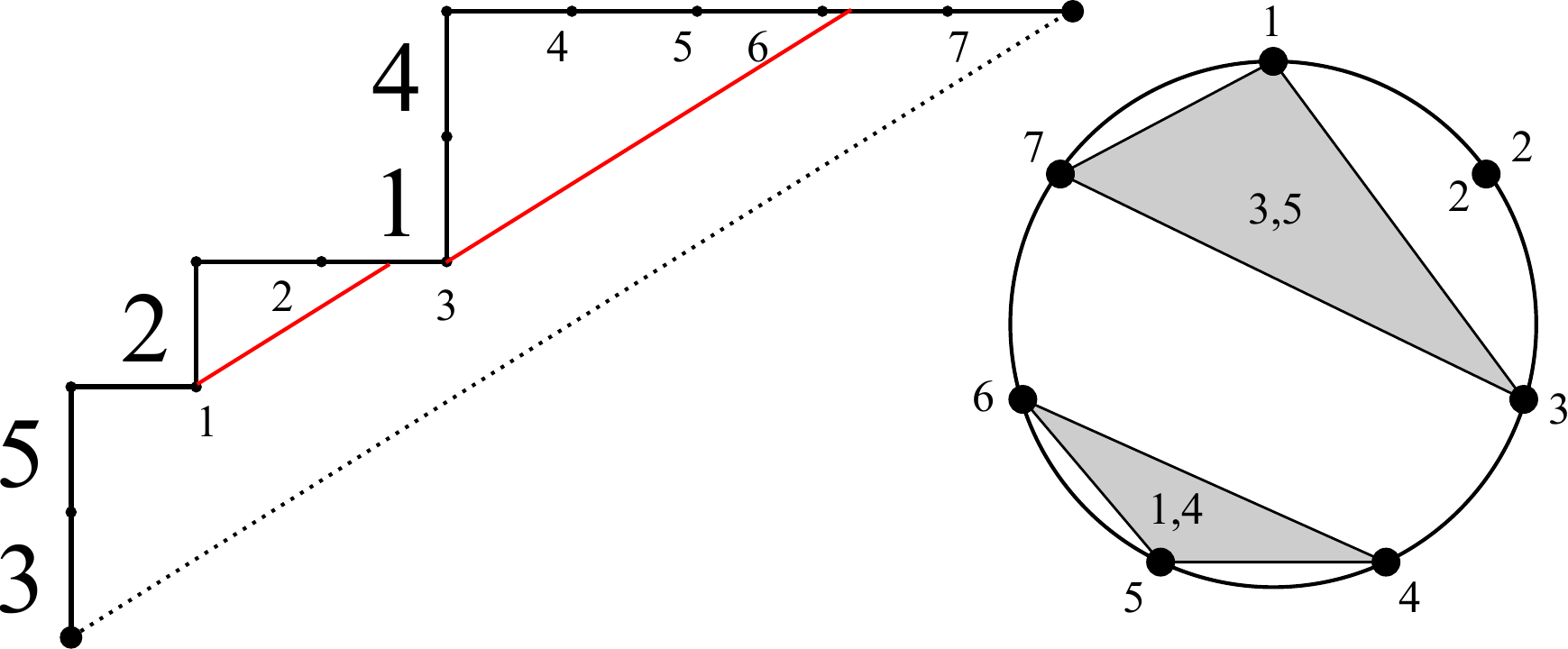}
\caption{A $5,8$-noncrossing parking function.}
\label{fig:park}
\end{figure}

An example of a $5,8$-noncrossing parking function is shown in Figure~\ref{fig:park}.
The parking function shown is $(\pi, f)$, where $\pi = \{ \{1,3,7\}, \{2\}, \{4,5,6\} \}$ and 
$f$ is the labeling 
$\{1,3,7\} \mapsto \{3,5\}, \{4,5,6\} \mapsto \{1,4\},$ and $\{2\} \mapsto \{2\}$.

By Proposition~\ref{ranks-fuss}, when $(a, b) = (n, mn+1)$, the set
$\Park^{NC}(n, mn+1)$ agrees with the construction of $\Park^{NC}_{\symm_n}(m)$ given in
\cite{RhoadesFuss}.  In the classical case $(a, b) = (n, n+1)$, the set $\Park^{NC}(n, n+1)$
appeared in the work of Edelman under the name of `$2$-noncrossing partitions' \cite{Edelman}.
The set $\Park^{NC}(a,b)$ carries an action of $\symm_a \times \ZZ_{b-1}$.

\begin{proposition}
\label{rational-action}
The set $\Park^{NC}(a,b)$ carries an action of the product group $\symm_a \times \ZZ_{b-1}$, where
$\symm_a$ acts by label permutation and $\ZZ_{b-1}$ acts by rotation.
\end{proposition}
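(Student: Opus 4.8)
The statement asserts that $\symm_a \times \ZZ_{b-1}$ acts on $\Park^{NC}(a,b)$, with $\symm_a$ permuting labels and $\ZZ_{b-1}$ acting by rotation. Since the two factors act on disjoint data (one on the label sets $f(B)$, the other on the underlying partition $\pi$ and its block structure), the two actions automatically commute once each is shown to be well-defined; so it suffices to check that each factor preserves $\Park^{NC}(a,b)$. The $\symm_a$-action is the easy half: if $w \in \symm_a$ and $(\pi,f) \in \Park^{NC}(a,b)$, define $w.(\pi,f) := (\pi, w \circ f)$, where $(w\circ f)(B) := \{w(i) : i \in f(B)\}$. This is a labeling of the blocks of the \emph{same} partition $\pi$ by subsets of $[a]$; since $w$ is a bijection of $[a]$, the disjoint union $[a] = \biguplus_B f(B)$ is carried to $[a] = \biguplus_B (w\circ f)(B)$, and $|(w\circ f)(B)| = |f(B)| = \rank^{\pi}_{a,b}(B)$. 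So $w.(\pi,f) \in \Park^{NC}(a,b)$, and $\symm_a$ acts.

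\textbf{The rotation action.} Define $\rot.(\pi, f) := (\rot(\pi), f')$, where $f'$ is the labeling of the blocks of $\rot(\pi)$ given by $f'(\rot(B)) := f(B)$ for each block $B$ of $\pi$. The content here is exactly the two properties proved earlier in the paper. First, $\rot(\pi) \in \NC(a,b)$: this is Proposition~\ref{closed-under-rotation} (closure of $\NC(a,b)$ under rotation). Second, the size condition survives: the block $\rot(B)$ of $\rot(\pi)$ must be assigned a label of size $\rank^{\rot(\pi)}_{a,b}(\rot(B))$, and by Proposition~\ref{dihedral-invariance} we have $\rank^{\rot(\pi)}_{a,b}(\rot(B)) = \rank^{\pi}_{a,b}(B) = |f(B)| = |f'(\rot(B))|$. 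The disjoint-union condition is immediate since $B \mapsto \rot(B)$ is a bijection on blocks and the labels are unchanged as sets. Finally $\rot^{b-1} = \mathrm{id}$ on indices, hence on partitions and on these labelings, so this is genuinely a $\ZZ_{b-1}$-action.

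\textbf{Commutativity and conclusion.} It remains to note that the two actions commute: for $w \in \symm_a$, applying $w$ then $\rot$ sends $(\pi,f)$ to $(\rot(\pi), g)$ with $g(\rot(B)) = (w\circ f)(B) = \{w(i): i\in f(B)\}$, while applying $\rot$ then $w$ sends it to $(\rot(\pi), h)$ with $h(\rot(B)) = \{w(j) : j \in f'(\rot(B))\} = \{w(j): j\in f(B)\}$; these agree. Hence the two actions assemble into an action of the direct product $\symm_a \times \ZZ_{b-1}$, as claimed.

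\textbf{Main obstacle.} There is essentially no obstacle: the proposition is a formal consequence of results already in hand. The only point requiring care — and the reason the proposition appears here rather than alongside the definition — is that the rotation action on labels is only well-defined \emph{because} rank is rotation-invariant on $\NC(a,b)$ (Proposition~\ref{dihedral-invariance}); without that invariance the relabeling $f'$ need not satisfy the size constraint, and indeed $\rank$ is \emph{not} rotation-invariant on partitions outside $\NC(a,b)$, as the paper's $(2,5)$-example shows. So the substance of the proof is simply to invoke Propositions~\ref{closed-under-rotation} and~\ref{dihedral-invariance} at the right moment.
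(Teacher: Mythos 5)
Your proof is correct and takes essentially the same approach as the paper's: the paper's (one-sentence) proof simply cites Proposition~\ref{dihedral-invariance} as the crucial fact, and your write-up spells out what that citation is doing, together with the routine checks (closure under rotation via Proposition~\ref{closed-under-rotation}, and commutativity of the two factors) that the paper leaves implicit.
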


\begin{proof}
We need to know that rotation preserves $a,b$-ranks of blocks,
which is Proposition~\ref{dihedral-invariance}.
\end{proof}

In order to state a formula for the character of the action in Proposition~\ref{rational-action}, we will need 
some notation.
Let $V = \CC^a / \langle (1, \dots, 1) \rangle$ be the reflection representation of $\symm_a$ and let
$\zeta = e^{\frac{2 \pi i}{b-1}}$.
Given 
$w \in \symm_a$ and $d \geq 0$, let $\mult_w(\zeta^d)$ be the multiplicity of $\zeta^d$ as an eigenvalue in the 
action of $w$ on $V$.  With this notation, a formula for the character $\chi$ is given by the following 
formula.

\begin{theorem}
\label{rational-weak}
Let $w \in \symm_a$ and let $g$ be a generator of $\ZZ_{b-1}$.  We have that
\begin{equation}
\label{character-formula}
\chi(w, g^d) = b^{\mult_w(\zeta^d)}
\end{equation}
for all $w \in \symm_a$ and $d \geq 0$.
\end{theorem}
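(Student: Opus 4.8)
The plan is to compute $\chi(w,g^{d})$ as the number of $(\pi,f)\in\Park^{NC}(a,b)$ fixed by $(w,g^{d})$, organized by the orbit structure of $\pi$ under the rotation subgroup. First I would unwind the action of Proposition~\ref{rational-action}: $g^{d}\cdot(\pi,f)=(\rot^{d}\pi,\ B\mapsto f(\rot^{-d}B))$ and $w\cdot(\pi,f)=(\pi,\ B\mapsto w(f(B)))$, so $(\pi,f)$ is fixed by $(w,g^{d})$ if and only if $\rot^{d}\pi=\pi$ and $f(\rot^{d}B)=w(f(B))$ for every block $B$ of $\pi$. Put $e:=\gcd(d,b-1)$, so that $\langle\rot^{d}\rangle=\langle\rot^{e}\rangle$ has order $q:=(b-1)/e$, and note $eq=b-1$. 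The root of unity $\zeta^{d}$ has order $q$, and since the eigenvalues contributed by an $\ell$-cycle of $w$ on $\CC^{a}$ are the $\ell$-th roots of unity, $\zeta^{d}$ occurs in $w|_{\CC^{a}}$ with multiplicity $m:=\#\{\text{cycles of }w\text{ of length divisible by }q\}$; passing to $V$ gives $\mult_{w}(\zeta^{d})=m$ when $q>1$ and $\mult_{w}(1)=c(w)-1$ when $q=1$, where $c(w)$ is the number of cycles of $w$. The case $q=1$ (that is, $(b-1)\mid d$, so $g^{d}=\mathrm{id}$) asks that the number of $(\pi,f)$ with each label set $f(B)$ a union of cycles of $w$ equals $b^{c(w)-1}$; I would obtain this from the rational cycle lemma, by recording $(\pi,f)$ as the function sending each cycle of $w$ to the index $\min(B)$ of its block and using Proposition~\ref{recover-dyck-path} to see that the admissible such functions are exactly those whose induced weighted composition is a valid $a,b$-rank sequence.

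For the main case $q>1$ I would invoke Section~\ref{Modified rank sequences} with its divisor ``$d$'' taken to be $e<b-1$. A fixed point $(\pi,f)$ has $\pi\in\NC_{e}(a,b)$; for each $\langle\rot^{e}\rangle$-orbit $\mathcal{O}$ of blocks of $\pi$, the set $F(\mathcal{O}):=\biguplus_{B\in\mathcal{O}}f(B)$ is $w$-stable (by the fixed-point equation, since $\rot^{d}B\in\mathcal{O}$), hence a disjoint union of cycles of $w$, and these sets $F(\mathcal{O})$ partition $[a]$. The key local facts are: (i) if $\mathcal{O}$ is a non-central orbit (so $|\mathcal{O}|=q$), then every cycle of $w$ inside $F(\mathcal{O})$ has length divisible by $q$ — because the $w$-orbit of any $x\in f(B)$ meets the $q$ disjoint sets $f(B),f(\rot^{d}B),\dots,f(\rot^{(q-1)d}B)$ cyclically — and, conversely, for a prescribed $F(\mathcal{O})$ there are exactly $q^{\,\#\{\text{cycles of }w\text{ in }F(\mathcal{O})\}}$ labelings $f$ on $\mathcal{O}$ realizing it (on each cycle of $w$ in $F(\mathcal{O})$ the block labels must cut out a single $w^{q}$-orbit, and there are $q$ ways to do this, independently across cycles); (ii) if $\mathcal{O}=\{B_{0}\}$ is central there is exactly one labeling. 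Here Proposition~\ref{dihedral-invariance} guarantees that all $q$ blocks of a non-central orbit have the same $a,b$-rank, so ``the rank of an orbit'' is well defined.

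Assembling, I claim that a fixed point $(\pi,f)$ is the same data as a function $\alpha$ from the cycles of $w$ to $\{\ast\}\cup[e]$ with $\alpha(C)\neq\ast\Rightarrow q\mid|C|$ (where $\ast$ marks the central block and $i\in[e]$ marks a non-central orbit sitting at position $i$), together with one of $q^{\#\alpha^{-1}(i)}$ internal choices at each $i$. Indeed, given $\alpha$, the sequence $s_{i}:=\tfrac1q\sum_{\alpha(C)=i}|C|$ is automatically a good sequence (its total is $\le a/q$), so by Proposition~\ref{modified-rank-sequence-unique} it names a unique $\pi\in\NC_{e}(a,b)$ whose non-central orbits occupy exactly the positions $i$ with $\alpha^{-1}(i)\neq\varnothing$ (and when $\alpha^{-1}(\ast)=\varnothing$ the divisibility forces $q\mid a$, consistently with $\pi$ having no central block); and conversely every fixed point produces such an $\alpha$ via $F(\mathcal{O})$. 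Since the total internal weight is $\prod_{i\in[e]}q^{\#\alpha^{-1}(i)}=q^{|\mathcal{N}|}$ with $\mathcal{N}:=\alpha^{-1}([e])$, and $\mathcal{N}$ may be any subset of the $m$ cycles of length divisible by $q$, summing over all valid $\alpha$ gives
\[
\chi(w,g^{d})=\sum_{\mathcal{N}}\#\{\,h:\mathcal{N}\to[e]\,\}\cdot q^{|\mathcal{N}|}=\sum_{\mathcal{N}}(eq)^{|\mathcal{N}|}=\sum_{t=0}^{m}\binom{m}{t}(b-1)^{t}=b^{m}=b^{\mult_{w}(\zeta^{d})},
\]
using $eq=b-1$.

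I expect the main obstacle to be the bookkeeping linking the combinatorial data to Section~\ref{Modified rank sequences}: in particular, confirming precisely that the nonzero entries of the $e$-modified rank sequence $S_{e}(\pi)$ biject with the non-central $\langle\rot^{e}\rangle$-orbits of $\pi$ and record their ranks (so that the wrapping-block subtleties do not interfere with reading $\pi$ off from $\alpha$), and establishing the local count in (i) that a $w$-stable set all of whose $w$-cycles have length divisible by $q$ admits exactly $q$ valid block-labelings per cycle. The $q=1$ cycle-lemma case also needs a self-contained argument, since it lies outside the range $1\le e<b-1$ handled in Section~\ref{Modified rank sequences}.
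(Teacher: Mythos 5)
Your proposal is correct, and the main case ($q>1$) takes a genuinely different route from the paper. The paper counts fixed points by a double-counting argument: it introduces $(w,g^{d})$-equivariant functions $e\colon[a]\to[b-1]\cup\{0\}$ (which are easily seen to number $b^{r_{q}(w)}$), shows that both these functions and the fixed parking functions stratify over the intermediate set of $(w,q)$-admissible set partitions $\sigma$ of $[a]$ with the same weight $(b-1)(b-1-q)\cdots(b-1-(t_{\sigma}-1)q)$, and matches the two sides stratum by stratum; the crucial combinatorial input is Corollary~\ref{symmetric-kreweras-count}. Your argument skips the auxiliary equivariant-function model and the intermediate partition $\sigma$ entirely: you encode a fixed point directly as a function $\alpha$ from cycles of $w$ to $\{\ast\}\cup[e]$ together with $q$-fold internal rotations per assigned cycle, and then the count $\sum_{\mathcal{N}}e^{|\mathcal{N}|}q^{|\mathcal{N}|}=(1+eq)^{m}=b^{m}$ falls out of the binomial theorem. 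This buys a shorter, more transparent derivation at the cost of slightly more care in setting up the bijection; both arguments ultimately rest on the same infrastructure from Section~\ref{Modified rank sequences} (Propositions~\ref{dihedral-invariance} and~\ref{modified-rank-sequence-unique}, and the fact --- used implicitly in the paper before Corollary~\ref{symmetric-kreweras-count} and flagged by you as a bookkeeping point --- that the nonzero entries of $S_{e}(\pi)$ biject with the non-central $\langle\rot^{e}\rangle$-orbits and record their ranks). For the degenerate case $q=1$ the paper does not invoke a cycle lemma directly but instead constructs the $\symm_{a}$-equivariant bijection $\varphi\colon\Park^{NC}(a,b)\to\Park_{a,b}$ to rational-slope parking functions and cites the known permutation character of the latter; your sketch is in the same spirit (passing to cycles of $w$), but you are right that this piece needs to be filled in separately since it lies outside the range of Section~\ref{Modified rank sequences}.
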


The multiplicity $\mult_w(\zeta^d)$ can be read off from the cycle structure of $w$.  Namely, for 
$d | b-1$ we have that
\begin{equation}
\mult_w(\zeta^d) = \begin{cases}
\# \text{(cycles of $w$)} - 1 & \text{if $q = 1$,} \\
\# \text{(cycles of $w$ of length divisible by $q$)} & \text{otherwise,}
\end{cases}
\end{equation}
where $q = \frac{b-1}{d}$.

When $(a, b) = (n, n+1)$, 
Proposition~\ref{rational-weak} was proven in \cite[Section 8]{ARR}.
When $(a, b) = (n, mn+1)$, Proposition~\ref{rational-weak} is \cite[Proposition 8.6]{RhoadesFuss}.
The character formula of
Equation~\ref{character-formula} is a rational 
extension of the Weak Conjecture of \cite{ARR, RhoadesFuss} when $W = \symm_a$ is the symmetric
group.

The argument used to prove \cite[Proposition 8.6]{RhoadesFuss} can be combined with the enumerative
results of
Section~\ref{Modified rank sequences} to prove Theorem~\ref{rational-weak}.
We quickly illustrate how this is done.

\begin{proof} (of Theorem~\ref{rational-weak})
Let $(w, g^d) \in \symm_a \times \ZZ_{b-1}$.  We want to show that $\chi(w, g^d) = b^{\mult_w(\zeta^d)}$.
Without loss of generality, we may assume that $d | b-1$.  Let $q := \frac{b-1}{d}$.  The argument 
depends on whether $q = 1$ or $q > 1$.

{\bf Case 1:} $q = 1$.  In this case, we are ignoring the action of $\ZZ_{b-1}$ and considering 
$\Park^{NC}(a,b)$ as an $\symm_a$-set.  We construct an $\symm_a$-equivariant bijection from 
$\Park^{NC}(a,b)$ to another $\symm_a$-set which is known to have the correct character.

Let $\Park_{a,b}$ be the set of all sequences $(p_1, p_2, \dots, p_a)$ of positive integers whose nondecreasing
rearrangement $(p'_1 \leq p'_2 \leq \cdots \leq p'_a)$ satisfies 
$p'_i \leq \frac{b}{a} (i-1) + 1$.  
Equivalently, the histogram with left-to-right heights $(p'_1 - 1, p'_2 - 1, \dots, p'_a - 1)$
stays below the line $y = \frac{b}{a} x$.
 Sequences in $\Park_{a,b}$ are called {\em rational slope parking functions}.

The symmetric group $\symm_a$ acts on $\Park_{a,b}$ by
$w.(p_1, p_2, \dots, p_a) := (p_{w(1)}, p_{w(2)}, \dots, p_{w(a)})$.  It is known that the character of this action
is given by Equation~\ref{character-formula} with $\zeta = 1$.

We build an $\symm_a$-equivariant bijection $\varphi: \Park^{NC}(a,b) \xrightarrow{\sim} \Park_{a,b}$ as follows.
Let $(\pi, f)$ be an $a,b$-noncrossing parking function.  Define a sequence $(p_1, p_2, \dots, p_a)$ by letting
$p_i = \min(B)$, where $B$ is the unique block of $\pi$ satisfying $i \in f(B)$. 
 Proposition~\ref{recover-dyck-path}
guarantees that $(p_1, p_2, \dots, p_a)$ is a sequence in $\Park_{a,b}$, so that the assignment
$\varphi: (\pi, f) \mapsto (p_1, p_2, \dots, p_a)$ gives a well defined function
$\varphi: \Park^{NC}(a,b) \rightarrow \Park_{a,b}$.  It is clear that $\varphi$ is $\symm_a$-equivariant.
Moreover, if $\varphi(\pi, f) = (p_1, p_2, \dots, p_a)$, then we can recover both the minimal elements of the blocks
of $\pi$ (these are the entries appearing in $p_1, p_2, \dots, p_a$) and the ranks of these blocks
(the rank of a block $B$ is the number of times $\min(B)$ occurs in $p_1, p_2, \dots, p_a$).  
Proposition~\ref{recover-dyck-path} says that the $a,b$-noncrossing partition $\pi$ is therefore determined
from $(p_1, p_2, \dots, p_a)$.  It is easy to see
that we can determine the entire $a,b$-noncrossing function $(\pi, f)$, so that $\varphi$ is an
$\symm_a$-equivariant bijection.

{\bf Case 2:}  $q > 1$.  This argument is a rational extension of \cite[Section 8]{RhoadesFuss}.  
Let $r_q(w)$ be the number of cycles of $w$ having length divisible by $q$.  We need to show that 
\begin{equation}
\label{fixed-point-count}
|\Park^{NC}(a,b)^{(w,g^d)}| = b^{r_q(w)}, 
\end{equation}
where $\Park^{NC}(a,b)^{(w,g^d)}$ is the set of $a,b$-noncrossing parking functions fixed by $(w,g^d)$.
The idea is to show that both sides of Equation~\ref{fixed-point-count} count a certain set of functions.

Let $g$ act on the set $[b-1] \cup \{0\}$ by the permutation $(1, 2, \dots, b-1)(0)$.  A function
$e: [a] \rightarrow [b-1] \cup \{0\}$ is said to be {\em $(w,g^d)$-equivariant} if we have
\begin{equation*}
e(w(j)) = g^d e(j)
\end{equation*}
for every $1 \leq j \leq a$.  We claim that the number of $(w,g^d)$-equivariant functions
$e: [a] \rightarrow [b-1] \cup \{0\}$ is equal to $b^{r_q(w)}$.  Indeed, the values of $e$ on any cycle of $w$
are determined by the value of $e$ on any representative of that cycle.  Moreover, unless a cycle of $w$ 
has length divisible by $q$, the relation $e(w(j)) = g^d e(j)$ forces $e(j) = 0$ for all $j$ belonging to that cycle.
For every cycle of $w$ having length divisible by $q$, we have $b$ choices for $e(j)$, where $j$ is a chosen
cycle representative.

By considering set partitions coming from preimages, we can get another expression for the number
of $(w,g^d)$-admissible functions $[a] \rightarrow [b-1] \cup \{0\}$.
A set partition $\sigma = \{B_1, B_2, \dots \}$ of $[a]$ is called {\em $(w,q)$-admissible} if 
\begin{itemize}
\item  $\sigma$ is $w$-stable in the sense that 
$w(\sigma) = \{w(B_1), w(B_2), \dots \} = \sigma$,
\item at most one block $B_{i_0}$ of $\sigma$ is itself $w$-stable in the sense that $w(B_{i_0}) = B_{i_0}$, and
\item for any block $B_i$ of $\sigma$ which is not $w$-stable, the blocks 
$B_i, w(B_i), w^2(B_i), \dots , w^{q-1}(B_i)$ are pairwise distinct, and $w^q(B_i) = B_i$.
\end{itemize}
It is straightforward to see that, for any $(w,g^d)$-equivariant function $e: [a] \rightarrow [b-1] \cup \{0\}$, the set 
partition $\sigma$ of $[a]$ defined by $i \sim j$ if and only if $e(i) = e(j)$ is 
$(w,q)$-admissible.  On the other hand, the same argument as in
\cite[Proof of Lemma 8.4]{RhoadesFuss} shows that the number of $(w,g^d)$-equivariant functions
$e: [a] \rightarrow [b-1] \cup \{0\}$ which induce a fixed $(w,q)$-admissible set partition $\sigma$ of $[a]$
is  $(b-1)(b-1-q)(b-1-2q) \cdots (b-1 - (t_{\sigma} - 1)q)$, where $t_{\sigma}$ is the number of 
non-singleton $w$-orbits of blocks of $\sigma$.  Combining this with the last paragraph, we get that
\begin{equation}
\label{twelvefold}
b^{r_q(w)} = \sum_{\sigma} (b-1)(b-1-q)(b-1-2q) \cdots (b-1 - (t_{\sigma} - 1)q),
\end{equation}
where the sum is over all $(w,q)$-admissible set partitions $\sigma$ of $[a]$.

To relate Equation~\ref{twelvefold} to parking functions, for $(\pi, f) \in \Park^{NC}(a,b)$ we let 
$\tau(\pi, f)$ be the set partition of $[a]$ defined by $i \sim j$ if and only if $i, j \in f(B)$ for some block $B \in \pi$.
If $(\pi, f) \in \Park^{NC}(a,b)^{(w,g^d)}$, it follows that $\tau(\pi, f)$ is a $(w,q)$-admissible set partition
of $[a]$.  On the other hand, if $\sigma$ is a fixed $(w,q)$-admissible partition of $[a]$, we claim that the number of
parking functions $(\pi, f) \in \Park^{NC}(a,b)^{(w,g^d)}$ with $\tau(\pi, f) = \sigma$ equals 
$(b-1)(b-1-q)(b-1-2q) \cdots (b-1 - (t_{\sigma} - 1)q)$, where $t_{\sigma}$ is the number of nonsingleton $w$-orbits
of blocks in $\sigma$.

To see why this claim is true, we consider how to construct an $a,b$-noncrossing parking function
$(\pi, f) \in \Park^{NC}(a,b)^{(w,g^d)}$ with $\tau(\pi, f) = \sigma$.  
This argument is almost the same as that proving \cite[Lemma 8.5]{RhoadesFuss}, but it will rely on 
Corollary~\ref{symmetric-kreweras-count}.

We first construct an $a,b$-noncrossing partition $\pi$ which is invariant under $\rot^d$.  If 
$\sigma$ has $m_i$ non singleton $w$-orbits of blocks of size $i$ for $1 \leq i \leq a$, then $\pi$ must have
$m_i$ $\langle \rot^d \rangle$-orbits of non-central blocks of rank $i$ for $1 \leq i \leq a$.  
Corollary~\ref{symmetric-kreweras-count} says that the number of such partitions $\pi$ is the multinomial
coefficient ${d \choose m_1, m_2, \dots, m_a, d- t_{\sigma}}$.  With $\pi$ fixed,
we consider how to build the labeling $f$ of the blocks of $\pi$.  The labeling $f$ must pair off the 
$\langle \rot^d \rangle$-orbits of non-central blocks of $\pi$ of rank $i$ with the
non-singleton $w$-orbits of blocks of $\sigma$ of size $i$.  For every $i$, there are $m_i!$ ways to do this matching.
As each of these orbits has size $q$, we also have $q$ ways to rotate $f$ within each orbit after this matching
is chosen.  In summary, we have that the number of pairs $(\pi, f) \in \Park^{NC}(a,b)^{(g,w^d)}$ satisfying
$\tau(\pi, f) = \sigma$ is
\begin{align*}
q^{m_1} \cdots q^{m_a} m_1! \cdots m_a! {d \choose m_1, \dots, m_a, d- t_{\sigma}} &=
q^{m_1} \cdots q^{m_a} \frac{d!}{(d-t_{\sigma})!} \\
&= (b-1)(b-1-q)(b-1-2q) \cdots (b-1 - (t_{\sigma} - 1)q).
\end{align*}
Applying Equation~\ref{twelvefold}, we obtain Equation~\ref{fixed-point-count}, completing the proof.
\end{proof}

Theorem~\ref{rational-weak} can be strengthened to prove a rational
analog of the Generic Strong Conjecture of \cite{RhoadesEvidence} in type A.
Let $V$ be the (complexified) reflection representation of $\symm_a$, let 
$\CC[V] = \bigoplus_{d \geq 0} \CC[V]_d$ be its polynomial ring, and equip 
$\CC[V]$ with the graded action of $\symm_a \times \ZZ_{b-1}$ given by letting 
$\symm_a$ act by linear substitutions and the generator $g$ of $\ZZ_{b-1}$ scale by $(e^{\frac{2 \pi i}{b-1}})^d$
in degree $d$.  We identify $\CC[V]_1$ with the dual space $V^*$ and consider the set of 
$\CC[\symm_a]$-equivariant linear maps
$\mathrm{Hom}_{\CC[\symm_a]}(V^*, \CC[V]_b)$ as an affine complex space.
We refer the reader to \cite{RhoadesEvidence} for the definitions of the objects in the following result.

\begin{theorem}
\label{rational-generic-strong}
Let $\mathcal{R} \subset \mathrm{Hom}_{\CC[\symm_a]}(V^*, \CC[V]_b)$ be the set of 
$\Theta \in \mathrm{Hom}_{\CC[\symm_a]}(V^*, \CC[V]_b)$ such that the `parking locus'
$V^{\Theta}(b) \subset V$ cut out by the ideal
$\langle \Theta(x_1) - x_1, \dots, \Theta(x_{a-1}) - x_{a-1} \rangle \subset \CC[V]$ is reduced (here 
$x_1, \dots, x_{a-1}$ is any basis of $V^*$).  
For any $\Theta \in \mathcal{R}$, there exists an equivariant bijection of 
$\symm_a \times \ZZ_{b-1}$-sets
\begin{equation}
V^{\Theta}(b) \cong_{\symm_a \times \ZZ_{b-1}} \Park^{NC}(a,b).
\end{equation}
Moreover, there exists a nonempty Zariski open subset 
$\mathcal{U} \subseteq \mathrm{Hom}_{\CC[\symm_a]}(V^*, \CC[V]_b)$ such that 
$\mathcal{U} \subseteq \mathcal{R}$.
\end{theorem}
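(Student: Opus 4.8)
The plan is to follow the commutative-algebra strategy behind the Generic Strong Conjecture in \cite{RhoadesEvidence}, feeding in the enumerative results of Section~\ref{Modified rank sequences}. The first step is to make the $\symm_a \times \ZZ_{b-1}$-action on the reduced locus explicit: since each $\Theta(x_i)$ is homogeneous of degree $b$ and $\zeta^{b-1} = 1$, the dilation $v \mapsto \zeta^{-1} v$ carries $V^{\Theta}(b)$ to itself (from $\Theta(x_i)(\zeta^{-1}v) = \zeta^{-b}\Theta(x_i)(v) = \zeta^{-b} x_i(v) = x_i(\zeta^{-1}v)$ together with $\zeta^{b-1}=1$), while $\symm_a$-equivariance of $\Theta$ makes $V^{\Theta}(b)$ stable under the linear reflection action of $\symm_a$; these two actions commute, so when $V^{\Theta}(b)$ is reduced we obtain a bona fide finite $\symm_a \times \ZZ_{b-1}$-set in which $(w,g^d)$ fixes exactly the points of $V^{\Theta}(b)$ lying in the $\zeta^d$-eigenspace $V_{w,\zeta^d}$ of $w$ on $V$.

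For the genericity claim I would study the universal parking locus $\mathcal{X} := \{(\Theta,v) : \Theta(\lambda)(v) = \lambda(v)\text{ for all }\lambda \in V^*\} \subseteq \mathrm{Hom}_{\CC[\symm_a]}(V^*,\CC[V]_b) \times V$ and the projection $p$ to $\mathrm{Hom}_{\CC[\symm_a]}(V^*,\CC[V]_b)$, whose fibre over $\Theta$ is $V^{\Theta}(b)$. The crucial observation is that the \emph{other} projection $\mathcal{X} \to V$ has affine-linear fibres (with $v$ fixed, the defining equations are linear in $\Theta$) of the expected codimension $a-1$ for $v$ in a dense open of $V$; so $\mathcal{X}$ has a distinguished component of dimension $\dim\mathrm{Hom}_{\CC[\symm_a]}(V^*,\CC[V]_b)$, smooth over that open and dominating the base, whence a generic fibre of $p$ is finite. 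A B\'ezout count, combined with the fact that $\CC[V]$ is Cohen--Macaulay of Krull dimension $a-1$ (so $a-1$ homogeneous forms whose only common zero is the origin form a regular sequence, a generic condition on the degree-$b$ parts $\Theta(x_1),\dots,\Theta(x_{a-1})$), shows the generic fibre has length exactly $b^{a-1}$, and generic smoothness in characteristic zero shows it is reduced. This produces a nonempty Zariski open $\mathcal{U} \subseteq \mathcal{R}$. Running the same argument over each of the finitely many linear subspaces $\mathrm{Hom}_{\CC[\symm_a]}(V^*,\CC[V]_b) \times V_{w,\zeta^d}$ shrinks $\mathcal{U}$ (keeping it nonempty) to a locus on which in addition $V^{\Theta}(b) \cap V_{w,\zeta^d}$ is reduced of cardinality $b^{\mult_w(\zeta^d)}$, i.e. $|V^{\Theta}(b)^{(w,g^d)}| = b^{\mult_w(\zeta^d)}$.

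To obtain the bijection for every $\Theta \in \mathcal{R}$, I would note that on the open locus where $p$ is finite, reducedness of the fibre is an open condition, so the zero-dimensional part of $\mathcal{R}$ is open in the irreducible affine space $\mathrm{Hom}_{\CC[\symm_a]}(V^*,\CC[V]_b)$, hence connected, and there $p$ is finite flat of degree $b^{a-1}$; the restriction of $\mathcal{X}$ over it is then a $\symm_a \times \ZZ_{b-1}$-equivariant finite \'etale cover, so all its fibres are isomorphic as $\symm_a \times \ZZ_{b-1}$-sets by parallel transport, and it suffices to treat one $\Theta \in \mathcal{U}$. For such $\Theta$, both $V^{\Theta}(b)$ and $\Park^{NC}(a,b)$ are disjoint unions of induced $\symm_a$-sets $\symm_a/\symm_\lambda$ over Young subgroups — on the geometric side $\lambda$ records the coincidence pattern of coordinates of a point of $V$, on the combinatorial side $\lambda$ is the rank multiset of a fixed $\pi$, the labellings of $\pi$ forming a transitive $\symm_a$-set with Young stabiliser (here Proposition~\ref{dihedral-invariance} enters) — so equality of $\symm_a$-characters from Theorem~\ref{rational-weak} (take $d=0$) forces the two multisets of Young subgroups to coincide, the $h_\lambda$ being linearly independent. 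The commuting $\ZZ_{b-1}$-action permutes the orbits of each fixed type $\lambda$, and matching, for each $\lambda$ and each $d$, the number of $\rot^d$-fixed type-$\lambda$ orbits — computed on the $\Park^{NC}(a,b)$ side via Corollary~\ref{symmetric-kreweras-count} and Lemma~\ref{modified-rank-sequences-rotate}, on the $V^{\Theta}(b)$ side via the eigenspace count above — determines these cyclic permutation sets and assembles the equivariant bijection.

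I expect the main obstacle to be exactly this last identification: because $\symm_a$ is not Burnside-rigid, matching permutation characters alone does \emph{not} give an isomorphism of $\symm_a \times \ZZ_{b-1}$-sets, and one must genuinely carry out the orbit-by-orbit bookkeeping — realising $\symm_a$-orbits as parabolic cosets and then tracking how dilation by $\zeta^{-1}$ shuffles the coordinate-coincidence patterns of points of $V^{\Theta}(b)$ against how $\rot$ shuffles rank sequences. A more hands-on alternative, sidestepping the rigidity issue, would be to build the bijection directly through the rational-slope parking functions $\Park_{a,b}$, extending the $\symm_a$-equivariant map $\varphi$ from the proof of Theorem~\ref{rational-weak} to a $\symm_a \times \ZZ_{b-1}$-equivariant identification of $\Park_{a,b}$ with $V^{\Theta}(b)$ (a rational-slope version of Haiman's realisation of parking functions as a reduced fibre) and checking that dilation matches rotation transported through $\varphi$; the regular-sequence and generic-smoothness inputs of the second step are routine but still need care.
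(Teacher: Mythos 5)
Your reconstruction of the genericity half (universal locus $\mathcal{X}$, generic regular sequence of degree-$b$ forms, Cohen--Macaulayness plus generic smoothness in characteristic zero) and of the parallel-transport step (connectedness of the good locus, the restriction of $\mathcal{X}$ being a finite \'{e}tale $\symm_a \times \ZZ_{b-1}$-equivariant cover) are in the same spirit as the argument the paper imports from \cite[Sections 4, 5]{RhoadesEvidence}. The paper itself gives only a two-line redirect to that reference, so a detailed expansion like yours is welcome, and you are right that the Burnside non-rigidity of $\symm_a \times \ZZ_{b-1}$ means character data alone cannot close the argument.

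Where your proposal deviates from the actual proof, and where you correctly sense the soft spot, is in how the final orbit identification is carried out. The paper does not establish the equivariant bijection by matching characters and then untangling the resulting ambiguity; it replaces the reference to \cite[Lemma 8.5]{RhoadesFuss} inside the proof of \cite[Lemma 4.6]{RhoadesEvidence} with the argument from the fifth paragraph of Case~2 in the proof of Theorem~\ref{rational-weak}. That paragraph does not just count fixed points: for each $(w,q)$-admissible set partition $\sigma$ of $[a]$ it constructs, directly and orbit-by-orbit, all $(\pi,f)\in\Park^{NC}(a,b)^{(w,g^d)}$ with $\tau(\pi,f)=\sigma$, choosing first a $\rot^d$-invariant $\pi$ via Corollary~\ref{symmetric-kreweras-count}, then a matching of $\langle\rot^d\rangle$-orbits of blocks to $\langle w\rangle$-orbits of blocks of $\sigma$, and finally a cyclic twist within each orbit. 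This is exactly the ``genuinely carry out the orbit-by-orbit bookkeeping'' step you flag as the main obstacle, so you should cite that paragraph rather than attempt to deduce the $\symm_a\times\ZZ_{b-1}$-isomorphism from fixed-point counts of cyclic subgroups, which really is insufficient on its own.

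Your hands-on alternative has a defect you should address: $\Park_{a,b}$ does not carry a natural $\ZZ_{b-1}$-action commuting with $\symm_a$. The map $\varphi:\Park^{NC}(a,b)\to\Park_{a,b}$ from Case~1 of the proof of Theorem~\ref{rational-weak} is only $\symm_a$-equivariant; any rotation action on $\Park_{a,b}$ that makes it $\symm_a\times\ZZ_{b-1}$-equivariant would have to be defined by transport through $\varphi$ (or through $V^{\Theta}(b)$), which is circular. A rational-slope Haiman-style realization would need to identify $V^{\Theta}(b)$ with a lattice-quotient construction carrying the dilation action, not with $\Park_{a,b}$ as a plain set of sequences; that is a more delicate step than ``checking that dilation matches rotation transported through $\varphi$,'' and is not what the cited argument does.
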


The proof of Theorem~\ref{rational-generic-strong} is almost a word-for-word recreation of 
\cite[Sections 4, 5]{RhoadesEvidence}.  One need only replace the reference to the proof 
of \cite[Lemma 8.5]{RhoadesFuss} in the proof of \cite[Lemma 4.6]{RhoadesEvidence} with the corresponding 
argument the fifth paragraph of Case 2 in the proof of Theorem~\ref{rational-weak}
(which ultimately relies on Corollary~\ref{symmetric-kreweras-count}).

\section{Closing Remarks}
\label{Closing Remarks}

This paper has focused entirely on rational Catalan theory for the symmetric group.
The more ambitious problem of extending rational Catalan combinatorics to other reflection groups 
$W$ is almost entirely open.  However, the results of this paper give a roadmap for defining rational
noncrossing partitions for the hyperoctohedral group.

Let $W(B_n)$ denote the hyperoctohedral group of signed permutations of $[n]$.  In the classical and Fuss-Catalan
cases, objects associated to the group $W(B_n)$ are obtained by considering those attached to 
the `doubled' symmetric group $S_{2n}$ which are invariant under antipodal symmetry.
When $(a,d) \rightarrow (2n, \frac{b-1}{2})$, the formulas in
Corollaries~\ref{symmetric-kreweras-count}, \ref{symmetric-narayana-count}, and 
\ref{symmetric-catalan-count} reduce to the hyperoctohedral analogs of the rational
Kreweras, Narayana, and Catalan numbers (here we view $2n$ as the Coxeter number of 
$W(B_n)$ and let the rational parameter $b$ be coprime to $2n$).
Thus, restricting to objects with antipodal symmetry gives the correct numerology for type B, even
in the rational setting.
It would be interesting to see how far the techniques of this paper can be extended to develop
on rational Catalan combinatorics outside of type A.

\section{Acknowledgements}
\label{Acknowledgements}

The authors are grateful to Drew Armstrong and Vic Reiner for helpful conversations.
B. Rhoades was partially supported by NSF grants
DMS - 1068861 and DMS - 1500838.


\begin{thebibliography}{99}
 
 \bibitem{ARR} D. Armstrong, V. Reiner, and B. Rhoades.  
 Parking spaces.  {\it Adv. Math.}, {\bf 269} (2015), 647--706.
 
 \bibitem{ARW} D. Armstrong, B. Rhoades, and N. Williams.
 Rational associahedra and noncrossing partitions.
 {\it Electron. J. Combin.}, {\bf 20 (3)} (2013), P54.
 
 \bibitem{Ath}  C. Athanasiadis.  On noncrossing and nonnesting partitions for the classical groups.
 {\it Electron. J. Combin.}, {\bf 5} (1998), R42.
 
 \bibitem{BEG} Y. Berest, P. Etingof, and V. Ginzburg.  Finite-dimensional representations of rational
 Cherednik algebras.  {\it Int. Math. Res. Not.}, {\bf 19} (2003), 1053-1088.
 
 \bibitem{BessisReiner}  D. Bessis and V. Reiner.  Cyclic sieving of noncrossing partitions for 
 complex reflection groups.
 {\it Ann. Comb.}, {\bf 15} (2011), 197--222.
 
 \bibitem{Edelman}  P. Edelman.  Chain enumeration and non-crossing partitions.
 {\it Discrete Math.}, {\bf 31} (1980), 171--180.
 
 
 \bibitem{Gordon} I. Gordon.  On the quotient ring by diagonal invariants.
 {\it Invent. Math.}, {\bf 153} (2003), no. 3, 503--518.
 
 \bibitem{Haiman}  M. Haiman.  Some conjectures on the quotient ring by diagonal invariants.
 {\it J. Algebraic Combin.}, {\bf 3} (1994), 17--76.
 
 \bibitem{KW}  A. G. Konheim and B. Weiss.  An occupancy discipline and applications.
 {\it SIAM J. Applied Math.}, {\bf 14} (1966), 1266--1274.



\bibitem{Reiner}  V. Reiner.  Non-crossing partitions for classical reflection groups.
{\it Discrete Math.}, {\bf 177} (1997), 195--222.

\bibitem{ReinerSommers}  V. Reiner and E. Sommers.
$q$-Narayana and $q$-Kreweras number for Weyl groups.  In preparation, 2015.
Slides available at 
{\tt http://www.math.umn.edu/}$\sim${\tt reiner/Talks/WachsFest.pdf}.

 
 \bibitem{RSWCSP}  V. Reiner, D. Stanton, and D. White. The cyclic sieving phenomenon.  
 {\it J. Comb. Theory, Ser. A}, {\bf 108} (2004), 17--50.
 
 \bibitem{RhoadesAlexander}  B. Rhoades.  Alexander duality and rational associahedra.
 {\it SIAM J. Discrete Math.}, {\bf 29 (1)} (2015), 431--460.
 
 \bibitem{RhoadesFuss}  B. Rhoades.  Parking structures: Fuss analogs.
 {\it J. Algebraic Combin.}, {\bf 40} (2014), 417--473.
 
 \bibitem{RhoadesEvidence} B. Rhoades.  Evidence for parking conjectures.
 Preprint, 2015.
 
 
 
 
 
 

  
\end{thebibliography}
\end{document}